\documentclass{amsart}
\usepackage{amsfonts,amssymb, amscd}

\newtheorem{theorem}{Theorem}[section]

\newtheorem{lemma}[theorem]{Lemma}
\newtheorem{corollary}[theorem]{Corollary}
\newtheorem{proposition}[theorem]{Proposition}
\theoremstyle{remark}
\newtheorem{remark}[theorem]{Remark}
\theoremstyle{definition}

\numberwithin{equation}{section}
\makeatother

\newcommand{\I}{1\!{\mathrm l}}

\DeclareMathOperator{\Rdb}{{\mathbb R}}

\DeclareMathOperator{\Ndb}{{\mathbb N}}

\begin{document}

\title[Ueda's theorem]{Ueda's peak set theorem for 
general  
von Neumann algebras}

\date{\today}

\author{David P. Blecher}
\address{Department of Mathematics, University of Houston, Houston, TX
77204-3008}
\email[David P. Blecher]{dblecher@math.uh.edu}
\author{Louis Labuschagne}
\address{DST-NRF CoE in Math. and Stat. Sci,\\ Unit for BMI,\\ Internal Box 209, School of Comp., Stat., $\&$ Math. Sci.\\
NWU, PVT. BAG X6001, 2520 Potchefstroom\\ South Africa}
\email{louis.labuschagne@nwu.ac.za}

\subjclass[2010]{46L51, 46L52, 47L75, 47L80 (primary), 03E10, 03E35,  03E55, 46J15, 46K50, 47L45 (secondary)}

\keywords{Subdiagonal operator algebra, peak projection, noncommutative Lebesgue decomposition, noncommutative Hardy space, sigma-finite von Neumann algebra, Kaplansky density theorem, F \& M Riesz theorem}

\thanks{This work is based on research supported by the National Research Foundation (IPRR Grant 96128), and
the National Science Foundation. Any opinion, findings and conclusions or recommendations expressed in this material, are those of the author, and therefore the NRF do not accept any liability in regard thereto.}

\begin{abstract} 
We extend Ueda's peak set theorem for subdiagonal subalgebras 
of tracial finite von Neumann algebras, to $\sigma$-finite von 
Neumann algebras 
(that is, von Neumann algebras with a faithful state; which includes
those on a separable Hilbert space, or with separable predual.)
To achieve this extension completely new 
strategies had to be invented at certain key points, ultimately 
resulting in a more operator algebraic proof of the result. 
Ueda showed in the case of finite von Neumann algebras 
that his peak set theorem 
is the fountainhead of many other very elegant results, like the 
uniqueness of the predual of such subalgebras, a highly refined 
F \& M Riesz type theorem, and a Gleason-Whitney theorem.    The same is true in our more general 
setting, and indeed we obtain a quite strong variant of the last mentioned theorem.  
We also show that set theoretic issues dash hopes for extending the theorem to  some other large general 
classes of von Neumann algebras,
for example finite or  semi-finite ones.     Indeed certain cases of Ueda's peak set theorem, for a von Neumann algebra $M$, may be seen as  `set theoretic statements' about $M$ that require the 
sets to not be `too large'.  
\end{abstract}

\maketitle

\vspace{.25in}

\section{Introduction}  

\bigskip

In a series of papers the authors extended 
most of the theory of generalized
$H^p$ spaces for function algebras from the 1960s
to the setting of Arveson's (finite maximal) {\em subdiagonal
algebras}.    Most of this is summarized in the survey \cite{BL5}.  
We worked in the setting where the subdiagonal
algebra   $A$ was a unital weak* closed subalgebra
of a von Neumann algebra $M$, and  
where $M$  possesses a faithful
normal tracial state.   Ueda followed this work in \cite{U} by
removing a hypothesis involving a dimensional restriction on $A \cap A^*$ 
in four or five of our theorems, and 
also establishing several other beautiful results such as 
the fact that such an $A$ has a unique predual, all of which followed from 
his very impressive noncommutative (Amar-Lederer) peak set type theorem.  (We will 
say more about peak sets and peak projections later in  this introduction when we 
describe notation and technical background.  Also Section 2 of the paper is devoted to general results about 
peak projections, for example giving some useful characterizations of peak projections
in $C^*$-algebras, von Neumann algebras, and general operator algebras that do not
seem to appear explicitly in the literature.)   
Ueda's peak set result may be phrased as saying that  the support projection in $M^{**}$
of a singular state $\varphi$ on $M$ is dominated by a peak projection $p$ for $A$ (so 
$\varphi(p) = 1$) with $p$ in the `singular part' of $M^{**}$ (that is, $p$ annihilates all 
normal functionals on $M$).

With the theory of subdiagonal subalgebras of von Neumann algebras with a faithful
normal tracial state reaching a level of maturity, several authors turned their attention to the more general $\sigma$-finite von Neumann algebras. Important structural results were obtained by Ji, Ohwada, Saito, Bekjan, and Xu \cite{JOS,jisa,Xu,Ji0,Ji,Bek}.    Recently in \cite{Lis} the second author used Haagerup's reduction theory \cite{HJX} to make several significant advances in generalizing aspects of the earlier theory to the  $\sigma$-finite case, most 
notably the Beurling invariant subspace theory.  The present work flowed out of this, 
being a direct continuation of the line of attack in \cite{Lis}. Here we extend Ueda's 
peak set theorem, and its corollaries, to maximal subdiagonal algebras $A$ in more general 
von Neumann algebras $M$, thereby demonstrating that such algebras too for example have a 
unique predual, admit a highly refined  F \& M. Riesz type theorem, 
have a powerful
 Gleason-Whitney theorem (in particular, 
every normal functional on $A$ has a 
unique Hahn-Banach extension to $M$,  and this extension is also 
normal), etc.  
  We remark that  special cases of 
two of  these results were obtained under an additional semi-finite hypothesis
in \cite{U2}.    
 The technically difficult extension of Ueda's theorem 
to the general $\sigma$-finite case is found in Section 4, while the applications mentioned a few lines back 
are discussed in  Section 5.   In Section 3 we establish some Kaplansky density
type results for operator spaces and subdiagonal algebras which we need.   

In Section 6 we discuss
the extent to which Ueda's theorem might be generalized beyond the $\sigma$-finite case.  
There is some limited good news: our results will have variants valid for any von Neumann algebra 
under an appropriate condition on its center, since it is
known that any  von Neumann algebra is a  direct sum of algebras of the form 
$M_i = R_i \bar{\otimes} B(K_i)$ for $\sigma$-finite  von Neumann algebras  $R_i$.   
The central projections $e_i$ corresponding to this direct sum will
sometimes allow a decomposition of a maximal subdiagonal algebra $A$ of $M$ as a 
direct sum of subalgebras $A_i$ of the $R_i$, and it is easy to see that 
then these are maximal subdiagonal subalgebras of the $\sigma$-finite 
algebras $R_i$.    
The `bad news' is that there is little hope of proving Ueda's theorem 
in ZFC for 
all von Neumann algebras, or even for commutative (and hence finite) or semi-finite von Neumann algebras.
Indeed we show that  the validity
  Ueda's theorem for commutative atomic  von Neumann algebras is a stronger
statement than (it would imply) a ZFC proof of the nonexistence of uncountable
measurable cardinals, a famous 
problem in set theory which nobody today seems to believe is solvable.     
Indeed certain cases of Ueda's peak set theorem, for a von Neumann algebra $M$, may be seen as  `set theoretic statements' about $M$ that require the 
sets to not be `too large'.  These issues are discussed in Section 6, and  this also led to
a sequel paper  with Nik Weaver \cite{BW}.   Some of the ramifications of \cite{BW} are 
described at the end of the present paper, for example that that work indicates that one cannot 
generalize Ueda's peak set theorem  in ZFC much beyond the $\sigma$-finite case (not even 
to $l^\infty(\Rdb)$).   Thus the main result of our paper is somewhat sharp.

We now turn to our set-up, background, and notation.
We recall that a $\sigma$-{\em finite von Neumann algebra} $M$ is one with the property that 
every collection of mutually orthogonal projections is at most countable.
Equivalently, $M$ has a faithful normal state (or even just a faithful state); or has a faithful normal representation possessing a
cyclic separating vector.  We often write 
 $\I$ for the identity of $M$, which may be viewed as the identity
 operator on the underlying Hilbert spaces $M$ is acting on.   
A projection $p \in M$ is called {\em finite}
 if it is not Murray von Neumann equivalent to any proper subprojection;
$M$ is said to be finite if $\I$ is finite.
Beware: $\sigma$-finite von Neumann algebras are not sums of finite ones,
nor is every finite von Neumann algebra  $\sigma$-finite.   However 
 a von Neumann algebra $M$ possesses a faithful
normal tracial state (the setting of \cite{BL5} and most of \cite{AIOA}) if and only if it 
is both finite and  $\sigma$-finite.  (For the difficult direction of this one may compose the center valued trace
on a finite  von Neumann algebra, with a  faithful
normal state on the center, which in this case is $\sigma$-finite.   From this it  follows easily from e.g.\ \cite[Proposition 2.2.5]{Sakai})
that any finite  von Neumann algebra  is a direct sum of algebras with 
 faithful
normal tracial states.)   Any von Neumann algebra which is
separably acting, or
 equivalently has separable predual $M_*$, is $\sigma$-finite.
We will sometimes mention  {\em semi-finite von Neumann algebras}; that is 
$1$ is a sum of mutually orthogonal finite projections, or equivalently 
that every nonzero projection has a  nonzero finite subprojection.  

For a  subalgebra $A$ of $C(K)$, the continuous scalar functions on a compact Hausdorff space $K$, 
a {\em peak set} is a set of form $f^{-1}(\{ 1 \})$ for $f \in A, \Vert f \Vert = 1$.  By replacing
$f$ by $(1+f)/2$ we may assume also that $|f| = 1$ only on $E$.    A noncommutative version 
of this called  {\em peak projections} was considered in \cite{Hay} and developed there
and  in a series of papers
e.g.\  \cite{BHN,Bnpi,BRead,BN,BReadII,BRst}.     There are
various useful equivalent definitions of peak projections in
the latter papers.   They may be defined to be the weak* limits  $q = \lim_n \, a^n$
 in the bidual
for $a \in {\rm Ball}(A)$ in the
case such limit exists \cite[Lemma 1.3]{BReadII}.
We will say much more about peak projections in Section 2 below.

Let $M$ be a $\sigma$-finite von Neumann algebra, and let
$\nu$ be a fixed faithful
normal state on $M$.  We write $N$ for the crossed product $M\rtimes_\nu \mathbb{R}$
of $M$ with the modular group 
$(\sigma_{t}^\nu)$ induced by $\nu$. If $M$ acts on the Hilbert space
$\mathfrak{H}$, this crossed product is constructed by canonically representing the elements $a$
of $M$ as operators on $L^2(\mathbb{R},\mathfrak{H})$ by means of the prescription
$\pi(a)\xi(t)=\sigma_{-t}^\nu(a)(\xi(t))$, and then
generating a ``larger'' von Neumann algebra by means of the elements $\pi(a)$ and the
shift operators $\lambda_s(\xi)(t)=\xi(t-s)$. The crossed product is known to admit a dual
action of $\mathbb{R}$ in the form of an automorphism group $(\theta_s)$
indexed by $\mathbb{R}$, and a normal faithful semifinite trace $\tau$ characterised 
by the property that $\tau\circ \theta_s=e^{-s}\tau$. (See \cite{Terp}.)

The identification $a\to\pi(a)$ above turns out to be a
*-isomorphic embedding of $M$ into $N$, and we will for the sake of simplicity identify
$M$ with $\pi(M)$. For simplicity of notation the canonical Hilbert space on which $N$ acts will be denoted by $K$
rather than $L^2(\mathbb{R},\mathfrak{H})$.
We will work in the space $\tilde{N}$
of all
$\tau$-measurable operators on $K$ affiliated to $N$. We remind the reader that the $\tau$-measurable operators are those
closed densely defined affiliated operators $f$ which are ``almost'' bounded in the sense that for any $\epsilon>0$ we may find a projection $e\in N$ with $\tau(\I-e)<\epsilon$, and with $fe\in N$. This space turns out to be a very well-behaved complete *-algebra large enough to admit all the noncommutative function spaces of interest.  
Using the fact that $\tau\circ \theta_s=e^{-s}\tau$, it is a simple matter to show that the group of *-automorphisms $(\theta_s)$ above admits an extension to continuous $*$-automorphisms on $\tilde{N}$ (see for example either of \cite[bottom p.\ 42]{G-L} and \cite[Proposition 4.7]{LL2}). We will retain the notation $\theta_s$ for these extensions.  
Within this framework, the Haagerup $L^p$-spaces 
($0<p<\infty$) are defined by $L^p(M)=\{a\in \tilde{N}: \theta_s(a)=e^{-s/p}a, s\in \mathbb{R}\}$. 
Having thus being identified as subspaces of $\tilde{N}$, the Haagerup $L^p$-spaces in a very natural way inherit both a conjugation and an order structure from $\tilde{N}$. In fact even the natural (quasi-)norm topology on $L^p(M)$ is inherited from $\tilde{N}$ -- 
the (quasi-)norm topology on each $L^p(M)$ agrees with the subspace topology inherited from $\tilde{N}$ \cite[Proposition II.26]{Terp}.  
  Via the canonical embedding of $L^1(M)$ in $\tilde{N}$ a functional
$\psi \in M_*$ is positive (resp.\ selfadjoint, i.e.\ a difference of two positive functionals) if and only if 
 its image in $\tilde{N}$ is positive (resp.\ selfadjoint) \cite{Terp}.

We remind the reader that the crossed product admits an operator valued weight from the extended positive part of $N$ to that of $M$.  Using this operator valued weight, any normal weight $\omega$ on $M$ may be extended to a dual weight $\widetilde{\omega}$ on $N$ by means of the simple prescription $\widetilde{\omega}=\omega\circ T$. In our analysis $h$ will denote the Radon-Nikodym derivative of the (faithful normal semifinite) dual weight $\tilde{\nu}$ of 
our fixed faithful
normal state $\nu$ above with respect to
$\tau$. It is known that in the $\sigma$-finite case, $h$ belongs to the positive cone of the Haagerup space $L^1(M)$.
Using this operator it is also known that for each $0\leq c\leq 1$, $a\to h^{c/2}ah^{(1-c)/p}$ defines a dense embedding of $M$ into
$L^p(M)$ ($1\leq p<\infty$) \cite{Kos}. Inspired by this fact, the Hardy spaces $H^p(A)$ ($1\leq p<\infty$) are defined to be the closure in $L^p(M)$ of  the subspace $h^{c/p}Ah^{(1-c)/p}$ where $0\leq c\leq 1$. (We remind the reader that the closures for the various values of $c$ all agree \cite{Ji}).

Given such a von Neumann algebra $M$ and ${\mathcal E}$ a  faithful normal conditional expectation from $M$ onto
a von Neumann subalgebra $D$, a {\em subdiagonal algebra} $A$ in $M$ (with respect to ${\mathcal E}$)  is defined to be
a weak* closed subalgebra
of $M$ containing $\I$ such that $A + A^*$ is weak* dense in $M$,
and for which the action of the 
conditional expectation ${\mathcal E} : M \to D = A \cap A^*$ is multiplicative on $A$.
We say that $A$ is {\em maximal} if it is not properly contained in
any larger proper subdiagonal algebra in $M$ with respect to ${\mathcal E}$.
Maximality of such unital weak* closed subdiagonal algebras satisfying the aforementioned weak* density condition, is characterised by the requirement that $A$ be invariant under the
modular automorphism group 
$(\sigma_t^\nu)$ introduced a few paragraphs
earlier ($\nu$ is as above).  
(see \cite[Theorem 1.1]{Lis}, or equivalently \cite[Theorem 1.1]{Xu} \& \cite[Theorem 2.4]{JOS}).

Since we will have occasion to use the 
Haagerup reduction theorem \cite{HJX}, we pause
to explain the essentials of that theory. From the von Neumann algebra $M$ one constructs a larger
algebra $R$ by computing the crossed product with the diadic rationals $\mathbb{Q}_D$ (not $\mathbb{R}$). 
 So in this case one uses only the *-automorphisms $\sigma^\nu_t$ with symbols $t$ in $\mathbb{Q}_D$ to similarly 
 construct a copy $\pi_{\mathbb{Q}_D}(M)$ of $M$ inside $B(L^2(\mathbb{Q}_D,\mathfrak{H}))$, with 
$R = M\rtimes_\nu \mathbb{Q}_D$ then being the algebra generated by the elements belonging to this copy of $M$, 
and the shift operators $\lambda_s$ with symbol $s$ in $\mathbb{Q}_D$. The discreteness of
the group ensures that in this case the associated operator valued weight from the extended positive part
of $R$ to that of $M$, is in fact a faithful normal conditional expectation $\Phi:R\to M$. Inside $R$ one may
then construct an increasing net $R_n$ of finite von Neumann algebras and a concomitant net of 
expectations $\Phi_n:R\to R_n$ for which $\Phi_n\circ \Phi_m=\Phi_m\circ\Phi_n=\Phi_n$ when $n\geq m$. (In the present setting this net actually turns out to be a sequence.) Each $R_n$ comes equipped with a faithful state $\tilde{\nu}_n=\nu\circ\Phi|_{R_n}$ and a faithful normal tracial state $\tau_n$.

The vital fact regarding this construction, is that it may be adapted to the study of maximal subdiagonal algebras.
Following \cite{Xu}, let $\hat{D}$ be the von Neumann subalgebra of $R$ generated by $D$ and the
shift operators $\lambda_s$ ($s\in \mathbb{Q}_D$). (This is in essence just a copy
of $D\rtimes_{\sigma^{\nu}}\mathbb{Q}_D$.) Similarly let
$\hat{A}$ be the weak* closed subalgebra generated by $A$ and the same set of shift operators.   Since
 $A$ is invariant under $\sigma_t^\varphi$ in that reference,
$\hat{A}$ may be defined as the weak* closure of sums of terms of the
form $\lambda(t) x$ with $x \in A$.  
 It is shown
in \cite{Xu} that $\hat{A} \cap \hat{A}^* = \hat{D}$.
The canonical expectation $\mathcal{E}:M\to D$ extends to an expectation
$\hat{{\mathcal E}} : R \to \hat{D}$, and if indeed $A$ is maximal subdiagonal with respect to
$\mathcal{E}$, the algebra $\hat{A}$ will be maximal subdiagonal with respect to $\hat{{\mathcal E}}$.
Moreover the expectation $\Phi:R\to M$, maps $\hat{A}$ and $\hat{D}$
onto  $A$ and $D$ respectively.
By equations (2.5) and (3.2) in \cite{Xu}, and the fact which we mentioned
a  few lines back
 regarding the definition of $\hat{A}$, we see that
  ${\mathcal E} \circ \Phi = \Phi \circ \hat{{\mathcal E}}$ on $\hat{A}$.  Hence 
$\Phi(\hat{A}_0) = A_0$ since
if $\hat{{\mathcal E}}(\hat{a}) = 0$ then
${\mathcal E}(\Phi(\hat{a})) = \Phi(\hat{{\mathcal E}}(\hat{a})) = 0$.

Taking this one step further, the subalgebras $\hat{A}_n=\hat{A}\cap R_n$ turn out to be 
finite maximal subdiagonal subalgebras
of the finite von Neumann algebras $R_n$, with the restriction of $\hat{{\mathcal E}}$
to $R_n$ acting multiplicatively on $\hat{A}_n$, and mapping $R_n$ onto $\hat{D}\cap R_n$.
The algebras $\hat{A}_n$ turn out to be an increasing sequence of algebras which are weak* dense
in $\hat{A}$.

\section{Peak projections}  

As we said in the introduction, peak projections with respect to an operator algebra
$A$ may be defined to be the weak* limits  $q = \lim_n \, a^n$
 in the bidual, 
for $a \in {\rm Ball}(A)$ in the
case such limit exists.      Historically,
if $A$ is a $C^*$-algebra $B$ then peak projections are  very closely related to  Edwards and Ruttiman's element $u(a)$ (see e.g.\ \cite{ER4}),  computed
in $B^{**}$.  Certainly they are the same if $a \geq 0$, and in that case they also agree with
the $B^{**}$-valued Borel functional calculus element $\I_{\{ 1 \}}(a)$.   Also they are the
same, that is $q$ above is $u(a)$, if $\Vert 1 - 2a \Vert \leq 1$ (see \cite[Corollary 3.3]{BN}).    
Thus we shall sometimes simply 
write our peak projections as $u(a)$.   
Indeed  every  peak projection is of the form 
$u(x)$ where $\Vert 1-2x \Vert \leq 1$ (if $A$ is unital and $a^n \to q$ in the weak* topology
 with $a \in {\rm Ball}(A)$
set $x = \frac{1}{2}(1+a)$ \cite[Corollary 6.9]{BHN},
or see \cite[Theorem 3.4 (3)]{BN} for the general case).  
We call $u(x)$ the peak for $x$.     There is an elementary connection 
with the {\em support projection} $s(\cdot)$ (computed in $B^{**}$) which is often useful:  if $B$ is a unital $C^*$-algebra then 
 $$u(1-x) = \I_{\{ 1 \}}(1-x) = \I_{(0,\infty)}(x)^\perp = s(x)^\perp , \qquad x \in {\rm Ball}(B)_+ .$$
A similar but more general result holds in a unital nonselfadjoint algebra $A$: in the notation of Proposition 2.22 in \cite{BRead} 
(see also \cite[Proposition 5.4]{Hay}) that result 
says that if $\Vert 1-2x \Vert \leq 1$ then the peak for $x$ is
$u(x) = s(1-x)^\perp$, where $s(\cdot)$ is the support projection in $A^{**}$ studied in e.g.\ \cite[Section 2]{BRead}.  

The following fact is implicit in the noncommutative peak set theory (see e.g.\ \cite{BRead,Bnpi,BN}), but we could not find it stated explicitly (except in the case of
two projections--see e.g.\ \cite{Hay}).    

\begin{lemma} \label{cipeak}  If $A$ is a closed subalgebra of a $C^*$-algebra $B$
then the infimum of any countable collection of peak projections for $A$ is a peak projection
for $A$. \end{lemma}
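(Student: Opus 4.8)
The plan is to mirror, at the level of representing elements, the classical Amar--Lederer trick: a countable intersection $\bigcap_n f_n^{-1}(\{1\})$ of peak sets of a function algebra is the peak set of the single function $\sum_n 2^{-n} f_n$ (once each $f_n$ is normalized so that $|f_n| = 1$ only on $f_n^{-1}(\{1\})$). Concretely, using the facts recalled above I would write each given peak projection as $p_n = u(a_n)$ with $a_n \in A$ satisfying $\|1 - 2 a_n\| \le 1$ (hence $\|a_n\| \le 1$), and with $u(a_n)$ the weak* limit of $a_n^k$ in $B^{**}$. Then I would fix scalars $w_n > 0$ with $\sum_n w_n = 1$ (e.g.\ $w_n = 2^{-n}$) and set $a := \sum_n w_n a_n$; this converges in norm, so $a \in A$, and the identity $1 - 2 a = \sum_n w_n (1 - 2 a_n)$ shows $\|1 - 2a\| \le 1$, so $p := u(a) = \text{weak*-}\lim_k a^k$ is again a peak projection for $A$. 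The claim to prove is then $p = \bigwedge_n p_n$.

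To get $p \le p_n$ I would argue as follows. Since $p = \text{weak*-}\lim_k a^k$, we have $a p = p$, so every unit vector $\xi$ in the range of $p$ satisfies $a \xi = \xi$, i.e.\ $\sum_n w_n a_n \xi = \xi$. Pairing with $\xi$ and taking real parts gives $\sum_n w_n \operatorname{Re} \langle a_n \xi, \xi\rangle = 1$; since $\operatorname{Re}\langle a_n\xi,\xi\rangle \le \|a_n \xi\|\,\|\xi\| \le 1$ for every $n$ while $\sum_n w_n = 1$, each term must equal $1$, and then the equality case of Cauchy--Schwarz forces $a_n \xi = \xi$. Hence $a_n p = p$, so $a_n^k p = p$ for all $k$, and letting $k \to \infty$ weak* gives $p_n p = p$, that is $p \le p_n$. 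For the reverse inequality I would set $q := \bigwedge_n p_n$ (infimum computed in $B^{**}$); from $q \le p_n = u(a_n)$ and $a_n u(a_n) = u(a_n)$ (immediate from $u(a_n) = \text{weak*-}\lim_k a_n^k$) one gets $a_n q = a_n p_n q = p_n q = q$ for every $n$, hence $a q = \sum_n w_n a_n q = q$, so $a^k q = q$ and therefore $p q = q$, i.e.\ $q \le p$. Together these give $p = \bigwedge_n p_n$, a peak projection.

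The main obstacle --- really the only non-routine point --- is the implication $a\xi = \xi \Rightarrow a_n \xi = \xi$ for all $n$: this is precisely where the normalization matters, as it would fail for a generic contraction $a_n$. It is the reduction (supplied by the recalled results) to representatives with $\|1 - 2a_n\| \le 1$, combined with the convexity identity $1 - 2a = \sum_n w_n(1 - 2 a_n)$, that makes $a$ a genuine weighted average peaking exactly on the common range of the $p_n$. The remaining ingredients are just manipulations of weak* limits and norm-convergent series, together with the elementary facts $a u(a) = u(a)$ and $u(a) = \text{weak*-}\lim_k a^k$.
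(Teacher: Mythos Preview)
Your proof is correct, but it follows a genuinely different route from the paper's. Both arguments begin identically: normalize so that $p_n = u(a_n)$ with $\|1-2a_n\|\le 1$, set $a=\sum_n 2^{-n}a_n$, and observe $\|1-2a\|\le 1$ so that $u(a)$ is again a peak projection for $A$. The divergence is in establishing $u(a)=\bigwedge_n u(a_n)$. The paper never passes to a Hilbert space; instead it invokes the identity $u(x)=s(1-x)^\perp$ valid when $\|1-2x\|\le 1$ (here $s(\cdot)$ is the support projection in $A^{**}$ studied in \cite{BRead}), together with the known fact that the support of a sum $\sum_n 2^{-n}(1-a_n)$ is the supremum of the individual supports $s(1-a_n)$. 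One line of lattice algebra then gives $u(a)=\bigl(\bigvee_n s(1-a_n)\bigr)^\perp=\bigwedge_n u(a_n)$. Your argument replaces this machinery with an elementary Cauchy--Schwarz computation in the universal representation: from $a\xi=\xi$ and the convex combination structure you extract $a_n\xi=\xi$, and conversely $a_nq=q$ for all $n$ forces $aq=q$. This is more self-contained and closer in spirit to the classical Amar--Lederer picture, while the paper's proof is shorter but presupposes the support-projection calculus of \cite{BRead}. The paper in fact remarks immediately after its proof that alternative arguments exist (a ``facial'' one in the style of \cite[Proposition~1.1]{BN}, and another via the two results that follow); yours is a further, direct variant.
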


\begin{proof}   
  We may assume that $A$ is unital, for example by Proposition 
\cite[Proposition 6.4 (1)]{BReadII} (see also \cite[Lemma 3.1]{BN}).
Suppose that $q_n$ is a peak for $a_n \in A$, and that $\Vert 1 - 2 a_n \Vert \leq 1$ (which can 
always be arranged as we said). 
Let $q = \wedge_n \, q_n$, and $a = \sum_n \, \frac{a_n}{2^n}$.       We
will show that $q$ is the peak for $a$.   
  By a relation above the lemma we have 
 $$u(a) = s(1-a)^\perp = s(\sum_n \, \frac{1-a_n}{2^n})^\perp
= (\vee_n \, s(1-a_n))^\perp = \wedge_n \, s(1-a_n)^\perp = \wedge_n \, q_n.$$
In the last line we have used e.g.\ Proposition 2.14 or Theorem 2.16 (2)  in \cite{BRead}, and the 
easy and known fact that the support projection of the closure
of a sum of right ideals with left contractive approximate identities
is the supremum of the individual support 
projections \cite{BRead}.  
\end{proof}

\begin{remark}  There is also a `facial' proof of the previous result
along the lines of \cite[Proposition 1.1]{BN}.  Another proof follows
from an appeal to the next two results.
 \end{remark}  

For a compact Hausdorff space $K$, the peak sets for $C(K)$ can be characterized abstractly as
the compact  $G_\delta$ subsets.   There is a similar fact for $C^*$-algebras using Akemann's noncommutative 
topology (see \cite{AAP} and references therein): the next result chararacterizes the peak projections
for any $C^*$-algebra $B$ as the `compact $G_\delta$ projections'.   A  $G_\delta$ projection
is the infimum in  $B^{**}$ of a sequence $(p_n)$ of open projections in $B^{**}$, where 
a projection in $B^{**}$ is said to be open if it is  a weak* limit of an increasing net from $B_{+}$.
 The orthogonal complement of an open projection is called closed.
A  compact  projection in $B^{**}$ is 
a  projection  $q \in B^{**}$  which is closed and satisfies $q a = q$ for some $a \in {\rm Ball}(B)_+$
(or equivalently, which is closed with respect to  $B^1$; see e.g.\ \cite{AAP}, or {\rm 2.47} in \cite{Brown1}).  If $B$ is unital then `compact' is the same as `closed'.

We have not been able to find the following result in the literature except for some form
(see e.g.\ \cite{Brown1}) of parts of the unital case:  

 \begin{proposition} \label{cipeak3}  If $B$ is a $C^*$-algebra and $q$ is a projection in $B^{**}$, the following are equivalent: \begin{itemize} 
\item   $q$ is a peak projection with respect to $B$.
\item    $q$ is a compact $G_\delta$ projection.    
\item     $q$ is the weak* limit of a decreasing sequence from $B_{\rm sa}$.  \end{itemize} 
\end{proposition}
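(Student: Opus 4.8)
The plan is to establish the cycle (i)$\Rightarrow$(iii)$\Rightarrow$(ii)$\Rightarrow$(i). Throughout we may assume $B$ is unital, by the unitization device used in the proof of Lemma \ref{cipeak} (see also \cite{BReadII}), and we may assume $q\neq 0$, the case $q=0$ being trivial. By the facts recalled just before the Proposition, a nonzero peak projection for $B$ has the form $q=u(x)=\I_{\{1\}}(x)$ for a positive contraction $x\in B$: indeed $\Vert\I-2x\Vert\le 1$ forces $0\le x\le\I$, and for $x\ge 0$ one has $u(x)=\I_{\{1\}}(x)$.

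For (i)$\Rightarrow$(iii): with $q=\I_{\{1\}}(x)$ and $0\le x\le\I$ in $B$, the sequence $(x^n)$ lies in $B_{\rm sa}$, is decreasing, and converges weak* in $B^{**}$ to $\I_{\{1\}}(x)=q$ by the bounded convergence theorem for the Borel functional calculus (on $[0,1]$ the functions $t\mapsto t^n$ decrease pointwise to $\I_{\{1\}}$). For (iii)$\Rightarrow$(ii): suppose $x_n\in B_{\rm sa}$ with $x_n\downarrow q$ weak*. A weak* convergent sequence is bounded, and a bounded decreasing net converges $\sigma$-strongly to its infimum, so $q=\inf_n x_n$ and hence $q\le x_n$; in particular each $x_n\ge 0$. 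Granting the reduction discussed in the last paragraph (that one may take $x_n\in{\rm Ball}(B)_+$), we then have $q\le x_n\le\I$, so $qx_nq=q$, so $x_nq=q$, and thus $q\le\I_{\{1\}}(x_n)$ for every $n$. Putting $r=\bigwedge_n\I_{\{1\}}(x_n)$ gives $x_nr=r$, whence $r=\lim_n x_nr=qr\le q$, so $q=\bigwedge_n\I_{\{1\}}(x_n)$. Each $\I_{\{1\}}(x_n)=\bigwedge_m\I_{(1-\frac1m,\infty)}(x_n)$ is a compact $G_\delta$ projection: the $\I_{(1-\frac1m,\infty)}(x_n)=s\bigl((x_n-(1-\tfrac1m))^+\bigr)$ are open, $\I_{\{1\}}(x_n)$ is closed since its complement $\bigvee_m(\I-x_n^m)$ is a supremum of elements of $B_+$, and it is compact because $x_n\I_{\{1\}}(x_n)=\I_{\{1\}}(x_n)$. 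Hence $q$, as a countable infimum of compact $G_\delta$ projections, is $G_\delta$, closed, and (using $x_1q=q$) compact.

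For (ii)$\Rightarrow$(i): write $q=\bigwedge_n p_n$ with $p_n$ open and, replacing $p_n$ by $p_1\wedge\dots\wedge p_n$ (still open), decreasing. Since $q$ is compact and $q\le p_n$, the noncommutative Urysohn lemma (Akemann; see \cite{AAP}, or {\rm 2.47} in \cite{Brown1}) yields $a_n\in{\rm Ball}(B)_+$ with $a_nq=q$ and support projection $s(a_n)\le p_n$. Set $a=\sum_n 2^{-n}a_n\in{\rm Ball}(B)_+$. Then $aq=q$, so $q\le\I_{\{1\}}(a)$; conversely, writing $e=\I_{\{1\}}(a)$, from $eae=e$, $0\le ea_ne\le e$ and $\sum_n 2^{-n}=1$ we get $ea_ne=e$, hence $a_ne=e$, hence $e\le\I_{\{1\}}(a_n)\le s(a_n)\le p_n$ for all $n$, so $e\le\bigwedge_n p_n=q$. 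Thus $q=\I_{\{1\}}(a)=u(a)$ is a peak projection.

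The main obstacle is the reduction, in (iii)$\Rightarrow$(ii), to a sequence of positive contractions. Given only $x_n\downarrow q$ with $x_n\in B_{\rm sa}$, one has $q\le x_n$ but possibly $\Vert x_n\Vert>1$, and naive fixes fail: $x_n\mapsto\Vert x_n\Vert^{-1}x_n$ and $x_n\mapsto\min(x_n,\I)$ need not be decreasing, and no operator-monotone function can map $[0,\Vert x_1\Vert]$ into $[0,1]$ while fixing the value $1$ and staying $<1$ below it. The intended route is: first note $q$ is closed (since $q=\inf\{x_n\}\ge\inf\{a\in B_{\rm sa}:a\ge q\}=\bar q\ge q$); then exploit that for each $\xi$ in the range of $q$ one has $\I_{(1-\delta,\infty)}(x_n)\xi\to\xi$, which follows from $x_n\xi\to\xi$ together with the elementary inequality $\delta\Vert\I_{[0,1-\delta]}(x_n)\xi\Vert\le\Vert x_n\xi-\xi\Vert$; this places $q$ below the $\sigma$-compact open projections $\bigvee_n\I_{(1-\delta,\infty)}(x_n)$, from which one extracts a decreasing sequence of positive contractions in $B$ with infimum $q$ (equivalently, realises $q$ directly as a compact $G_\delta$ projection). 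Making this extraction rigorous is where the real work lies; once it is done, the rest of the argument above goes through verbatim.
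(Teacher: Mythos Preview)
Your (i)$\Rightarrow$(iii) is the same as the paper's, and your (ii)$\Rightarrow$(i) via the noncommutative Urysohn lemma is correct and more self-contained than the paper's route, which simply cites \cite[Corollary~3.34]{Brown1} to deduce that $1-q$ is a support projection (so that $q=s(h)^\perp=u(1-h)$ for suitable $h$). One minor point: your blanket reduction to the unital case at the outset is a bit quick for the direction starting from (ii) or (iii); the paper proves the unital case first and then handles the nonunital case by a separate argument at the end (showing that a peak for $B^1$ of the form $u(a+t1)$ with $0\le t<1$ is already the peak $u(a/\|a\|)$ for $B$).

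The genuine gap is the one you yourself flag in (iii)$\Rightarrow$(ii): reducing to $x_n\in{\rm Ball}(B)_+$. Your proposed fix via the open projections $\bigvee_n\I_{(1-\delta,\infty)}(x_n)$ does not obviously terminate. The paper bypasses this entirely by proving (iii)$\Rightarrow$(i) \emph{directly}, working with the complement rather than with spectral projections of the $x_n$. Namely, in the unital case one passes from $x_n\downarrow q$ to an increasing sequence $a_n\in B_+$ with $a_n\uparrow 1-q$, sets $h=\sum_n 2^{-n}a_n\in B_+$, and observes that any state annihilating $h$ annihilates each $a_n$ and hence $1-q$; thus $h$ is strictly positive in the hereditary subalgebra determined by $1-q$, giving $1-q=s(h)$ and $q=u(1-h)$. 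This avoids all the spectral-projection and operator-monotonicity issues you raise. Note, however, that the paper's choice $a_n=1-x_n$ tacitly uses $x_n\le\I$, so the very normalization you could not achieve is also presupposed there; once that reduction is granted, both your argument and the paper's go through, with the paper's being shorter since it never needs to exhibit $q$ as an infimum of compact $G_\delta$ projections.
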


\begin{proof}     (1) $\Rightarrow$ (2) \ 
 If $q = u(a)$ for $a \in {\rm Ball}(B)_+$ let $p_n$ be the $B^{**}$-valued spectral projection of $(1-\frac{1}{n},1+\frac{1}{n})$
for $a$.   This gives a  decreasing sequence  of  open 
projections in $B^{**}$ whose infimum (= weak* limit) equals $q$ by the Borel functional calculus.
It is well known that peak projections are compact (e.g.\ since $q = aq$).   

(1) $\Rightarrow$ (3) \ Clearly $a^n \searrow u(a)$ weak* if $a \in {\rm Ball}(B)_+$ .

If $B$ is unital then one may finish the proof using the relation 
$u(1-x) =  s(x)^\perp$ discussed above,  
and known results about the support projection $s(\cdot)$.    Thus (2) implies by e.g.\ \cite[Corollary 3.34]{Brown1}  that $1-q$ is a support projection, so that $q$ is a peak projection.
Similarly if $B$ is unital then (3) implies that $1-q$ is the weak* limit of an increasing sequence $(a_n)$ from $B_+$.   Let $k = \sum_{k=1}^\infty \, \frac{1}{2^n} \, a_n$, then $k \leq 1-q$.
A standard argument shows that $k$ is strictly positive in the hereditary subalgebra (HSA) determined by $1-q$ (any state of that HSA annihilating $k$ 
also annihilates each $a_n$, hence also $1-q$, which is impossible).    Thus $1-q$ is the support projection of $k$, so that $q$ is the peak projection of $1-k$.   

If $B$ is nonunital then  (2) or (3) imply similar conditions with respect to $B^1$, so that by the unital case  $q$ is a peak for $a+t1$ for some $t \in [0,1]$ and 
$a  \in {\rm Ball}(B)_+$.   The norm of $a +t 1$ is $\Vert a \Vert + t = 1$, and so $0 \leq t = 1- \Vert a \Vert < 1$ (or else $a = 0$ which
is impossible).      It is then easy to see, 
by e.g.\ the functional calculus for $a$, that $q = u(a+t1) = u(a/\Vert a \Vert)$, giving (1).
\end{proof} 

We now describe  general  peak projections in terms of the $C^*$-algebraic peak  projections characterized in the 
last result.

\begin{lemma} \label{cipeak2}  If $A$ is a closed subalgebra of a $C^*$-algebra $B$ and $q \in B^{**}$ 
then $q$ is a peak projection for $A$  if and only if $q  \in A^{\perp \perp}$ and $q$ is a peak  projection for $B$. \end{lemma}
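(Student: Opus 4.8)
For the forward implication, suppose $q$ is a peak projection for $A$, so that $q = \lim_n a^n$ weak* in $B^{**}$ for some $a \in {\rm Ball}(A)$. Since ${\rm Ball}(A) \subseteq {\rm Ball}(B)$, this same limit exhibits $q$ as a peak projection for $B$; and since each $a^n$ lies in $A$ with $\Vert a^n \Vert \le 1$, the weak* limit $q$ lies in the weak* closure of $A$ in $B^{**}$, which is $A^{\perp\perp}$. Thus this direction is immediate.

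For the converse, assume $q \in A^{\perp\perp}$ and $q$ is a peak projection for $B$. As in the proof of Lemma \ref{cipeak} we reduce to the case that $A$, and hence $B$, is unital (using e.g.\ \cite[Proposition 6.4(1)]{BReadII}). By Proposition \ref{cipeak3} (see its proof) we may write $q = u(z)$ for some $z \in {\rm Ball}(B)_+$, so that, by the relation $u(1-x) = s(x)^\perp$ recalled above the lemma applied with $x = 1-z$, we have $q^\perp = s(1-z)$, the support projection in $B^{**}$ of the positive contraction $1-z$. In particular $q^\perp$ is an \emph{open} projection in $B^{**}$, and since the hereditary subalgebra $\overline{(1-z)B(1-z)}$ has strictly positive element $1-z$ it is $\sigma$-unital, so $q^\perp$ is a $G_\delta$ (indeed sequentially approximable) open projection. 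Also $q^\perp = 1 - q \in A^{\perp\perp}$. Recall finally from the discussion before Lemma \ref{cipeak} that $q$ will be a peak projection for $A$ as soon as we can write $q^\perp = s(w)$ for some $w \in A$ with $\Vert 1 - 2w \Vert \le 1$, for then $q = u(1-w)$. So everything reduces to producing such a $w$.

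The construction runs parallel to the proof of Lemma \ref{cipeak}. The key point is that, because $q^\perp$ lies in $A^{\perp\perp}$ and is a $G_\delta$ open projection of $B^{**}$, it is also open \emph{relative to $A$} in the sense of the noncommutative topology for operator algebras of \cite[Section 2]{BRead}, and the associated left ideal (equivalently, hereditary subalgebra) of $A$ is $\sigma$-unital; concretely, it admits a \emph{sequential} approximate identity $(e_n)$ in ${\rm Ball}(A)$ with $\Vert 1 - 2 e_n \Vert \le 1$ for each $n$ (such accretive approximate identities being provided by \cite[Section 2]{BRead}) and $e_n \to q^\perp$ weak*. Put $w = \sum_{n \ge 1} 2^{-n} e_n \in A$. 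Then $\Vert 1 - 2w \Vert \le \sum_{n \ge 1} 2^{-n} \Vert 1 - 2 e_n \Vert \le 1$, while $s(w) = \vee_n \, s(e_n) = q^\perp$ by the fact used already in Lemma \ref{cipeak} (see \cite[Proposition 2.14]{BRead}) that the support projection of the closure of a sum of left ideals with right contractive approximate identities is the supremum of the individual support projections. Hence $q = u(1-w)$ is a peak projection for $A$.

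The step carrying all the real work is the transfer just invoked: that a $G_\delta$ open projection of $B^{**}$ lying in $A^{\perp\perp}$ is in fact a $\sigma$-unital open projection \emph{for $A$}. I would establish this by intersecting the $B$-hereditary subalgebra supported at $q^\perp$ with $A$ --- passing to the left ideal $\{ a \in A : q^\perp a = a \}$ and its companion hereditary subalgebra, as in \cite[Section 2]{BRead} --- verifying that this $A$-side object still has support projection $q^\perp$ (this is precisely where $q^\perp \in A^{\perp\perp}$ enters), and then extracting from the single positive generator $1-z$ of the ambient $B$-hereditary subalgebra a sequential accretive contractive approximate identity for it. An alternative route to the converse would bypass support projections and instead invoke noncommutative peak interpolation (as in \cite{Hay, BHN, BN}) to produce directly an element $a \in {\rm Ball}(A)$ with $a^n \to q$; but the support-projection argument has the merit of drawing only on facts already recalled in this section.
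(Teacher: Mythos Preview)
Your forward direction is fine and matches the paper's.

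For the converse, the paper takes a much shorter route than you do: after passing to unital $B$, it sets $A^1 = \mathrm{span}(A,1_B)$, notes via Proposition~\ref{cipeak3} that $q$ is a compact $G_\delta$ projection, and then simply \emph{cites} \cite[Lemma 4.4]{BRst} to conclude that $q$ is a peak projection for $A^1$; finally \cite[Proposition 6.4(1)]{BReadII} drops from $A^1$ to $A$. In other words, the paper invokes an existing peak-interpolation lemma rather than constructing the peaking element by hand.

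Your attempt to build $w \in A$ directly has a real gap at exactly the step you flag as ``carrying all the real work.'' You assert that $q^\perp$, being a $G_\delta$ open projection in $B^{**}$ lying in $A^{\perp\perp}$, is automatically open \emph{relative to $A$} with a sequential accretive cai. But you do not prove this, and your sketched mechanism does not obviously work: the left ideal $\{a \in A : q^\perp a = a\}$ has \emph{some} support projection, but the hypothesis $q^\perp \in A^{\perp\perp}$ only says $q^\perp$ is a weak* limit of elements of $A$, not of elements satisfying $q^\perp a = a$; so there is no immediate reason that support projection equals $q^\perp$. Worse, your proposal to ``extract from the single positive generator $1-z$'' a sequential accretive cai is problematic because $1-z \in B$, not $A$; nothing you have written produces elements of $A$ from it. Bridging this gap is precisely the content of the noncommutative peak interpolation results you allude to at the end---and that is exactly what \cite[Lemma 4.4]{BRst} supplies. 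So your ``alternative route'' is in fact the paper's route, and your primary route is incomplete without it.
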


\begin{proof}     If $q$ is a peak projection for $A$, the peak for 
$x \in {\rm Ball}(A)$,
then $q$ is the weak* limit of $(x^n)$, 
which  is in $A^{\perp \perp}$.  It is also the peak for some $a \in {\rm Ball}(B)_+$ 
(e.g.\ for $x^* x$ or $|x|$, this follows for example from the proof of \cite[Lemma 3.1]{BN} or from the formula
$u(x^* x) = u(x)^* u(x)$).

The converse is Corollary 4.5 of [12], and has
even been generalized to Jordan operator algebras in a recent paper of the first author with
Neal.  A sketch of a proof: Suppose
 that  $q  \in A^{\perp \perp}$ and $q$ is a peak  projection for $B$.  
We may assume that $B$ is unital.  
Let $A^1$ be the span of $A$ and $1_B$.  By Proposition \ref{cipeak3} (ii) and 
\cite[Lemma 4.4]{BRst}, $q$ is a peak projection for $A^1$.  
\cite[Proposition 6.4 (1)]{BReadII} (see also \cite[Lemma 3.1]{BN}), $q$ is a peak projection for $A$. 
 \end{proof}

The following result, which characterizes peak projections
in subalgebras of von Neumann algebras, will also be used in \cite{BW}.

\begin{theorem} \label{psvn}  If $A$ is a closed subalgebra (not necessarily with 
any kind of approximate identity) of a von Neumann algebra $M$ 
and $q$ is a  projection in $M^{**}$ , then $q$ is a peak projection for $A$
 if and only if $q  \in A^{\perp \perp}$ and $q = \wedge_n \, q_n$, the infimum in  $M^{**}$ of a (decreasing, if desired) sequence $(q_n)$ of 
projections in $M$.  \end{theorem}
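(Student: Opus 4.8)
The plan is to leverage the $C^*$-algebraic characterization from Proposition \ref{cipeak3} together with Lemma \ref{cipeak2}, which already reduces the question of being a peak projection for $A$ to being (a) in $A^{\perp\perp}$, and (b) a peak projection for the ambient $C^*$-algebra. Since $M$ is a von Neumann algebra, it is in particular a unital $C^*$-algebra, so by Lemma \ref{cipeak2} the statement ``$q$ is a peak projection for $A$'' is equivalent to ``$q \in A^{\perp\perp}$ and $q$ is a peak projection for $M$.'' Thus the theorem will follow once I show that, for a projection $q$ in $M^{**}$, being a peak projection for the von Neumann algebra $M$ is equivalent to being the infimum of a decreasing sequence of projections \emph{in $M$} (as opposed to open projections in $M^{**}$, which is what Proposition \ref{cipeak3}(ii) gives for a general $C^*$-algebra).

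First I would record the easy direction. If $q = \wedge_n q_n$ with each $q_n$ a projection in $M$, then each $q_n$ is certainly the weak* limit of the constant sequence $(q_n)$, hence an open projection in $M^{**}$ (and trivially a $G_\delta$ projection); so $q$ is a $G_\delta$ projection. Since $M$ is unital, $q$ is automatically compact, and Proposition \ref{cipeak3} gives that $q$ is a peak projection for $M$. Combined with $q \in A^{\perp\perp}$ and Lemma \ref{cipeak2}, $q$ is a peak projection for $A$.

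For the converse, suppose $q$ is a peak projection for $A$. By Lemma \ref{cipeak2}, $q \in A^{\perp\perp}$ and $q$ is a peak projection for $M$; write $q = u(a)$ for some $a \in {\rm Ball}(M)_+$. The key point is to upgrade the ``decreasing sequence of open projections in $M^{**}$'' picture to ``decreasing sequence of genuine projections in $M$.'' Here I would use the Borel functional calculus inside the von Neumann algebra $M$ itself: for each $n$, let $q_n = \I_{[1-\frac1n, 1]}(a)$, the spectral projection of $a$ for the interval $[1-\frac1n,1]$, computed in $M$ (not in $M^{**}$). These $q_n$ lie in $M$ because $M$ is a von Neumann algebra and hence closed under Borel functional calculus of its self-adjoint elements. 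The sequence $(q_n)$ is clearly decreasing, and $\wedge_n q_n = \I_{\{1\}}(a) = u(a) = q$, again by the Borel functional calculus (the intersection of the intervals $[1-\frac1n,1]$ is $\{1\}$, and $\I_{\{1\}}(a)$ is exactly the peak projection of $a$ since $a \geq 0$, as noted in the discussion before Lemma \ref{cipeak}). This produces the required decreasing sequence of projections in $M$.

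The main obstacle is essentially conceptual rather than technical: one must be careful that the spectral projections are taken in $M$ and that the infimum is nonetheless computed in $M^{**}$ and still equals $q$ --- i.e.\ that the natural embedding $M \hookrightarrow M^{**}$ sends $\I_{[1-1/n,1]}^M(a)$ to a projection whose infimum over $n$ in $M^{**}$ agrees with the $M^{**}$-Borel-calculus element $\I_{\{1\}}^{M^{**}}(a)$. This is standard: the inclusion $M \to M^{**}$ is a normal $*$-homomorphism onto its image (it is weak*--weak* continuous), so it intertwines bounded Borel functional calculus, and weak* limits of monotone sequences are preserved. One should also note at the outset, as in the proof of Lemma \ref{cipeak}, that there is no loss in assuming $A$ is unital (via \cite[Proposition 6.4(1)]{BReadII} or \cite[Lemma 3.1]{BN}), which makes the appeal to the unital $C^*$-algebra $M$ clean; but since $M$ already has an identity this is actually automatic here. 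With these points in hand the two directions close the proof.
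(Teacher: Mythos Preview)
Your overall architecture matches the paper's: reduce via Lemma \ref{cipeak2} to the case $A=M$, and then for the forward direction write $q=u(a)$ with $a\in {\rm Ball}(M)_+$ and take spectral projections of $a$ inside $M$. The converse direction is fine (the paper uses Lemma \ref{cipeak} instead of Proposition \ref{cipeak3}, but your route through ``compact $G_\delta$'' is equally valid since each $q_n\in M$ is both open and closed in $M^{**}$).

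The genuine gap is in your last paragraph. You correctly flag the delicate point---that the $q_n$ are computed in $M$ while the infimum must be taken in $M^{**}$---but your resolution is wrong. The canonical inclusion $\iota:M\hookrightarrow M^{**}$ is \emph{not} weak*--weak* continuous, and it does \emph{not} preserve infima of decreasing sequences. Concretely, in $M=\ell^\infty$ the projections $\chi_{\{k\geq n\}}$ decrease to $0$ in $M$ but to a nonzero projection (the ``singular part'' projection) in $M^{**}$. So the sentence ``the inclusion $M\to M^{**}$ is a normal $*$-homomorphism onto its image (it is weak*--weak* continuous), so it intertwines bounded Borel functional calculus, and weak* limits of monotone sequences are preserved'' is false, and with it your argument that $\wedge_n^{M^{**}}q_n=\I_{\{1\}}^{M^{**}}(a)$ collapses. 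Indeed, if that claim were true the theorem would be trivial, and the entire phenomenon that the $M^{**}$-infimum can be strictly larger than the $M$-infimum---which is exactly what makes peak projections interesting here---would disappear.

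The paper fixes this by a sandwich argument. With $p_n$ the $M^{**}$-valued spectral projection of $(1-\tfrac1n,1+\tfrac1n)$ for $a$ and $q_n$ the corresponding $M$-valued one, the relation between the two functional calculi is $q_n=z\,p_n$ for the central projection $z$ with $zM^{**}\cong M$; hence $q_n\leq p_n$. Since $p_n\searrow q$ in $M^{**}$ (Borel calculus in $M^{**}$), one gets $\wedge_n^{M^{**}}q_n\leq q$. The reverse inequality $q\leq q_n$ is obtained by inserting a continuous $f$ with $\I_{\{1\}}\leq f\leq \I_{(1-1/n,1+1/n)}$, for which $f(a)$ is unambiguous. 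Your closed-interval choice $[1-\tfrac1n,1]$ works just as well once you run this sandwich, but you do need to run it; the ``normality of $\iota$'' shortcut is not available.
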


\begin{proof}   If $q$ is a peak projection for $x \in {\rm Ball}(A)$ then by 
the last lemma $q$ is in $A^{\perp \perp}$,
and $q$ is the peak for some $a \in {\rm Ball}(M)_+$, so that 
$q =  \I_{\{ 1 \}}(a)$, the $M^{**}$-valued spectral projection of $\{ 1 \}$.   
Let $q_n$ be the $M$-valued spectral projection of $(1-\frac{1}{n},1+\frac{1}{n})$
for $a$.   We claim that the decreasing sequence  $(q_n)$ in $M$ has infimum $q$ in $M^{**}$.
To see this note that as in Proposition \ref{cipeak3},   $q$ is the infimum of $(p_n)$ in $M^{**}$ where $p_n$ is
the $M^{**}$-valued spectral projection of $(1-\frac{1}{n},1+\frac{1}{n})$
for $a$.       However, $q_n \leq p_n \searrow q$.   This may be seen 
from viewing the $M$-valued Borel functional calculus  as the
$M^{**}$-valued Borel functional calculus multiplied by the canonical central projection $z$ with
$z M^{**} \cong M$ (this follows in turn from the uniqueness property of the Borel functional calculus).
Also $q \leq q_n$ (as may be seen e.g.\ by the above functional calculi, using continuous
$f$ with $\I_{\{ 1 \}} \leq f \leq \I_{(1-\frac{1}{n},1+\frac{1}{n})}$).  

Conversely, suppose that $q  = \wedge_n \, q_n$.  Note that
 $q_n$ is clearly a peak projection for $M$,
hence so is $q$ by Lemma \ref{cipeak}.     Now apply the last lemma.
\end{proof}  

\section{A Kaplansky density type result}

The following simple principle
will be useful 
for dealing with Kaplansky density type results in 
unital operator spaces.

\begin{lemma} \label{kos}   Let $M$ be a unital operator space
or operator system.  Let $\sigma$ be any linear topology on $M$ weaker than the
norm topology, e.g. the weak or weak* topology (the latter
if $M$ is a dual space too).  Let $X$ be a subspace of $M$ for which
${\rm Ball}(X)$ is dense in ${\rm Ball}(M)$ in the topology $\sigma$.  Then
$\{ x \in X : x + x^* \geq 0 \}$ is  dense in
$\{ x \in M : x + x^* \geq 0 \}$  in the topology $\sigma$.
\end{lemma}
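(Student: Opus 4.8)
The plan is to arrange the approximation so that positivity of $x+x^*$ is \emph{forced by the norm bound on the approximating elements}, rather than chased directly through the topology $\sigma$ (under which positivity, being only weakly closed, gives no traction). The one elementary fact that drives everything: if $x \in M$ with $\|x\|\le 1$, then $\mathrm{Re}(x):=\tfrac12(x+x^*)\le \|x\|\,1\le 1$, and hence $(1-x)+(1-x)^* = 2\bigl(1-\mathrm{Re}(x)\bigr)\ge 0$ automatically.

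First, two reductions. Put $P_M=\{m\in M: m+m^*\ge 0\}$ and $P_X=P_M\cap X$; both are convex cones, hence so are their $\sigma$-closures, and it suffices to prove $P_M\subseteq \overline{P_X}^{\,\sigma}$. For $m_0\in P_M$ the elements $m_0+\tfrac1n 1$ converge to $m_0$ in norm (hence in $\sigma$) and satisfy $(m_0+\tfrac1n 1)+(m_0+\tfrac1n 1)^* \ge \tfrac2n 1$; so it is enough to approximate a fixed $y\in M$ with $y+y^*\ge\eta\,1$ for some $\eta>0$. We also assume $1\in X$ (which is the situation in the intended applications of this lemma).

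Main step. Since $y+y^*\ge\eta\,1$ with $\eta>0$ (so $y\neq 0$), for $\delta>0$ a short computation in an enveloping $C^*$-algebra gives
\[
(1-\delta y)^*(1-\delta y)=1-\delta(y+y^*)+\delta^2 y^*y\le (1-\delta\eta+\delta^2\|y\|^2)\,1,
\]
and the left-hand side is positive, so $\|1-\delta y\|^2\le 1-\delta\eta+\delta^2\|y\|^2<1$ whenever $0<\delta<\eta/\|y\|^2$. Fix such a $\delta$, and set $w:=1-\delta y\in {\rm Ball}(M)$. By hypothesis there is a net $(x_\alpha)$ in ${\rm Ball}(X)$ with $x_\alpha\to w$ in $\sigma$. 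Since $\sigma$ is a linear topology, the affine map $z\mapsto 1-z$ is $\sigma$-continuous, so $1-x_\alpha\to 1-w=\delta y$ in $\sigma$; and $1-x_\alpha\in X$ because $1\in X$. By the observation at the outset, $(1-x_\alpha)+(1-x_\alpha)^* = 2\bigl(1-\mathrm{Re}(x_\alpha)\bigr)\ge 0$, so each $1-x_\alpha$ lies in $P_X$. Therefore $\delta y\in\overline{P_X}^{\,\sigma}$, and since $\overline{P_X}^{\,\sigma}$ is a cone, $y\in\overline{P_X}^{\,\sigma}$.

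The only genuine obstacle is the conceptual one resolved by the first paragraph: one cannot approximate a positive $y$ directly inside $X$ and hope positivity survives, since the convergence is merely $\sigma$. The device is to shrink $y$ by a small scalar $\delta$ so that $1-\delta y$ sits in the unit ball, approximate \emph{that} element within ${\rm Ball}(X)$, and then apply $z\mapsto 1-z$; the norm bound $\|x_\alpha\|\le 1$ then makes the resulting elements have positive `real part' for free. The remaining ingredients — the passage to strictly accretive $y$ and the choice of $\delta$ — are routine.
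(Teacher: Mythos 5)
Your argument is essentially the paper's own proof: both pass to the strictly accretive element $y+\tfrac1n 1$, choose a small scalar $\delta$ (the paper's $C$) so that $(1-\delta y)^*(1-\delta y)\le 1$, approximate $1-\delta y$ in ${\rm Ball}(X)$, and recover accretive approximants of $y$ from $\tfrac1\delta(1-x_\alpha)$ using $\|x_\alpha\|\le 1$. The hypothesis $1\in X$ that you flag explicitly is used just as implicitly in the paper's proof (it is satisfied in the intended application $X=A+A^*$), so there is no substantive difference.
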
 \begin{proof}  Suppose that
$x \in M$ with
$x + x^* \geq 0$.  Then
$z = x + \frac{1}{n}$ satisfies
$z + z^* \geq 0$ and
$$z + z^* \geq \frac{2}{n} \geq
C  z^* z$$
for some constant $C > 0$.  This implies that
$C^2 z^* z - C(z + z^*) + 1 = (1 - Cz)^* (1-Cz) \leq 1$.
We may then approximate $1-Cz$   in the topology $\sigma$
by a net $x_t \in {\rm Ball}(X)$,
 and so $\frac{1}{C} (1-x_t) \to z$  with respect to $\sigma$.
Since $2 - x_t - x_t^* \geq 0$ we have shown that
$z$ is in the closure of $\{ x \in X : x + x^* \geq 0 \}$
 in the topology $\sigma$.
Hence so is $x$.  
  \end{proof}

The following is a Kaplansky density type result generalizing the 
one in Corollary 4.3 in \cite{BL3}, and \cite[Section 4]{U}
(where Ueda points out that the dimensional restriction in 
\cite[Corollary 4.3]{BL3} can be removed).

\begin{theorem} \label{kap1}  If $A$ is a maximal subdiagonal algebra 
in a $\sigma$-finite von Neumann algebra $M$, then 
${\rm Ball}(A + A^*)$ is weak* dense in ${\rm Ball}(M)$.
Hence ${\rm Ball}(A + A^*)_{\rm sa}$ is weak* dense in 
${\rm Ball}(M)_{\rm sa}$.
Moreover, $(A + A^*)_+$ is weak* dense in $M_+$.     Also, in all of these statements we can
replace  `weak*' by $\sigma$-strong*.
\end{theorem}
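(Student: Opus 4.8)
The plan is to exploit the Haagerup reduction machinery recalled in the introduction, reducing the $\sigma$-finite case to the finite tracial case, where the analogous Kaplansky density statement is already available from \cite[Corollary 4.3]{BL3} together with Ueda's remark in \cite[Section 4]{U} that the dimensional restriction there is unnecessary. So first I would recall that $\hat{A}_n$ is a finite maximal subdiagonal subalgebra of the finite (tracial) von Neumann algebra $R_n$, and that the $\hat{A}_n$ increase with weak* dense union in $\hat{A}$. Applying the known finite-case result to each $\hat{A}_n$ gives that ${\rm Ball}(\hat{A}_n + \hat{A}_n^*)$ is weak* dense (equivalently, by Kaplansky-type arguments in the tracial setting, strong* dense) in ${\rm Ball}(R_n)$.

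Next I would pass from $R_n$ up to $R$ and then down to $M$. For the first passage: since $\bigcup_n R_n$ is weak* dense in $R$ and the $R_n$ are increasingly nested, a routine argument (using that the $R_n$ are von Neumann subalgebras and the conditional expectations $\Phi_n$) shows ${\rm Ball}(\hat{A} + \hat{A}^*)$ is weak* dense in ${\rm Ball}(R)$; one should be slightly careful, using $\Phi_n$ to bring a general element of ${\rm Ball}(R)$ back into ${\rm Ball}(R_n)$ and then approximating within $R_n$. For the second passage, I would apply the normal faithful conditional expectation $\Phi : R \to M$, which is weak*-continuous, contractive, and (as recalled in the introduction) carries $\hat{A}$ onto $A$ and hence $\hat{A} + \hat{A}^*$ onto $A + A^*$; since $\Phi$ maps ${\rm Ball}(R)$ onto (or at least densely into, which suffices) ${\rm Ball}(M)$, weak* density of ${\rm Ball}(\hat{A} + \hat{A}^*)$ in ${\rm Ball}(R)$ pushes forward to weak* density of ${\rm Ball}(A + A^*)$ in ${\rm Ball}(M)$. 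This proves the first assertion.

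The remaining assertions are then formal. Applying Lemma \ref{kos} with $X = A + A^*$ (which is selfadjoint) and $M = M$, with $\sigma$ the weak* topology, upgrades ball density to density of $\{x \in A+A^* : x + x^* \ge 0\}$ in $\{x \in M : x+x^* \ge 0\}$; since $A+A^*$ is selfadjoint, $x \in A+A^*$ with $x+x^* \ge 0$ can be replaced by its selfadjoint part, yielding weak* density of $(A+A^*)_+$ in $M_+$ after noting $x+x^* = 2\,{\rm Re}\,x$ and rescaling. The selfadjoint-ball statement follows the same way, or directly by taking real parts of a weak*-approximating net and invoking Lemma \ref{kos}. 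For the $\sigma$-strong* versions, I would note that on bounded sets the weak* and $\sigma$-strong* topologies have the same closed convex sets (or simply that $\sigma$-strong* closure of a convex set equals its weak* closure), so each weak* density statement on balls automatically self-improves to $\sigma$-strong* density; alternatively one re-runs Lemma \ref{kos} with $\sigma$ taken to be the $\sigma$-strong* topology, which is permissible since it is a linear topology weaker than the norm topology.

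The main obstacle I anticipate is the bookkeeping in the two ``transfer'' steps — ensuring that the conditional expectations $\Phi_n$ and $\Phi$ genuinely intertwine the subalgebra structures on the nose (this is exactly what equations (2.5) and (3.2) of \cite{Xu}, quoted in the introduction, are for) and that surjectivity of $\Phi$ on balls (or at least dense range) is correctly invoked; and secondarily, being careful that the finite-case input from \cite{BL3,U} is applied in the precise form ``${\rm Ball}$ of $\hat{A}_n + \hat{A}_n^*$ dense in ${\rm Ball}(R_n)$'' rather than a weaker unbounded version. Everything else is soft: Lemma \ref{kos} does the self-adjoint and positivity bookkeeping, and the equality of weak* and $\sigma$-strong* closures of convex sets handles the final topological refinement.
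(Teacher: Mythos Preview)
Your proposal is correct and follows essentially the same route as the paper: Haagerup reduction to the finite tracial case (where the result is known from \cite{BL3,U}), transfer up to $R$ via the increasing $R_n$ and then down to $M$ via the expectation $\Phi$, then Lemma \ref{kos} for the positive cone and the standard convexity argument (the paper cites \cite[Theorem 2.6 (iv)]{Tak}) for the $\sigma$-strong* upgrade. The only cosmetic differences are that the paper observes directly that $\Phi({\rm Ball}(R)) = {\rm Ball}(M)$ (since $M \subset R$ and $\Phi|_M = \mathrm{id}$), and gets the selfadjoint-ball statement simply by taking real parts rather than via Lemma \ref{kos}.
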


\begin{proof}  
The first assertion is known in the case that $M$ has a 
faithful normal tracial state (this is the case 
discussed immediately before the theorem).   
Let $x \in {\rm Ball}(M)$.   As stated in the 
introduction, one may construct a $\sigma$-finite von Neumann super-algebra 
$R$ of $M$ with $M$ appearing as the image of a faithful normal 
conditional expectation $\Phi:R\to M$. This $R$ may be constructed so 
that it appears as the weak* closure of an increasing sequence $R_n$ of 
finite von Neumann algebras each of which is the image of a faithful normal 
conditional expectation $\Phi_n:R\to R_n$ for which we have that
$\Phi_n\circ \Phi_m=\Phi_m\circ\Phi_n=\Phi_n$ when $n\geq m$. 
In fact each $x\in R$ is the weak* limit of the sequence 
$\Phi_n(x)$. 

As shown by \cite{Xu}, this construction can be modified in such a 
way that $R$ admits a maximal subdiagonal subalgebra $\hat{A}$ for which 
$\Phi$ will map $\hat{A}$ onto $A$, and $\hat{A}\cap \hat{A}^*$ onto 
$A\cap A^*$. Moreover the subalgebras $\hat{A}_n\cap R_n \subset
 R_n$, are each maximal subdiagonal in $R_n$, with $\cup_{n=1}^\infty\hat{A}_n$ 
weak*-dense in $\hat{A}$. By known results ${\rm Ball}(\hat{A}_n + \hat{A}_n^*)$ 
is for each $n$ weak* dense in ${\rm Ball}(R_n)$. So the subset 
$\cup_{n=1}^\infty {\rm Ball}(\hat{A}_n + \hat{A}_n^*)$ of 
${\rm Ball}(\hat{A} + \hat{A}^*)$ must be weak* dense in the weak* closure of 
$\cup_{n=1}^\infty{\rm Ball}(R_n)$, namely ${\rm Ball}(R)$. It therefore follows that 
$\Phi({\rm Ball}(\hat{A} + \hat{A}^*)) = {\rm Ball}(A + A^*)$
is weak* dense in $\Phi({\rm Ball}(R)) = {\rm Ball}(M)$.

The second assertion follows from the first by taking the 
real part.  The third follows by applying the previous Lemma to the first.   
The last assertion follows from the previous assertions
 and \cite[Theorem 2.6 (iv)]{Tak}.
\end{proof}

We give a corollary of this which we will use later.
Note that any element in $A + A^*$ has a unique 
representation $a^* + d + b$ with $a, b \in A_0$ and $d \in D$.
This is because if $a^* + d + b = 0$ then applying ${\mathcal E}$
we see that $d = 0$.  Also $A_0 \cap A_0^* \subset
 D \cap A_0 = (0)$.  Thus $A + A^* = A_0 \oplus D \oplus A_0^*$.
It follows from this that selfadjoint elements $x$ in $A+A^*$ are of form $a + d + a^*$ for $a \in A_0, d \in D_{\rm sa}$; and $d$ 
must be positive if $x \geq 0$ since $d = {\mathcal E}(x)$.
We write $H$ for the Hilbert transform on $L^2(M)$ with respect to $A$
as presented in {\rm \cite{Ji}}.  It is shown there that $H$
 is continuous on $L^2(M)$. 
In this insightful paper Ji shows that for each fixed $1<p<\infty$ the operators 
$$H_\theta(h^{\theta/p}(a+d+b^*)h^{(1-\theta)/p} = ih^{\theta/p}(b^* -a)h^{(1-\theta)/p}, \qquad a, b \in A, d \in D, \theta\in[0,1],$$ extend to a unique bounded operator on $L^p(M)$ (the Hilbert transform) not dependent on the parameter $\theta$. See \cite[Theorem 3.2]{Ji}.
In the case $p=\infty$, the Hilbert transform $H$ is only partially defined on $M$ by means of the formula by $H(a+d+b^*) = i(b^* -a)$, for $a, b \in A, d \in D$.   
We remind the reader that $h$ is the Radon-Nikodym derivative of the (faithful normal semifinite) dual weight $\tilde{\nu}$ with respect to the canonical trace on $N$, and its role in the definition of $H^p(A)$ is
discussed in the introduction. 
  Similarly, the selfadjointness and positivity referred to below was discussed there 
too.

\begin{lemma} \label{maxprx}  Let $A$ be as in the previous result,
and $H$ the Hilbert transform on $L^2(M)$ with respect to 
$A$. If $x \in M_{sa}$, then $h^{\frac{1}{2}} \, H(x h^{\frac{1}{2}})$ is
selfadjoint. Moreover $H(x h^{\frac{1}{2}})^*=H(h^{\frac{1}{2}}x)$.
 \end{lemma}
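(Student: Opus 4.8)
The plan is to reduce the whole statement to the explicit description of the Hilbert transform $H$ on the dense subspaces $(A+A^{*})h^{\frac{1}{2}}$ and $h^{\frac{1}{2}}(A+A^{*})$ of $L^{2}(M)$, to check the two identities there by a direct computation, and then to transfer them to a general $x\in M_{sa}$ by the Kaplansky-type density result Theorem \ref{kap1} together with the $L^{2}$-boundedness of $H$.

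First I would recall from \cite{Ji} that $H$ is a bounded operator on $L^{2}(M)$, and extract from its construction and defining properties there (that $\xi+iH(\xi)$ lies in the $L^{2}$-closure of $Ah^{\frac{1}{2}}$, that $\xi-iH(\xi)$ lies in the $L^{2}$-closure of $A^{*}h^{\frac{1}{2}}$, and that $H(\xi)$ has no component in the $L^{2}$-closure of $Dh^{\frac{1}{2}}$) its action on self-adjoint elements: if $x\in(A+A^{*})_{sa}$ is written in the canonical form $x=a+d+a^{*}$ with $a\in A_{0}$ and $d=d^{*}\in D$ from the discussion following Theorem \ref{kap1}, then $H(xh^{\frac{1}{2}})=i(a^{*}-a)h^{\frac{1}{2}}$, and likewise $H(h^{\frac{1}{2}}x)=ih^{\frac{1}{2}}(a^{*}-a)$. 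Since $(a^{*}-a)^{*}=-(a^{*}-a)$, the element $h^{\frac{1}{2}}H(xh^{\frac{1}{2}})=ih^{\frac{1}{2}}(a^{*}-a)h^{\frac{1}{2}}$ of $L^{1}(M)$ is manifestly self-adjoint, and $H(xh^{\frac{1}{2}})^{*}=ih^{\frac{1}{2}}(a^{*}-a)=H(h^{\frac{1}{2}}x)$. So the lemma holds for $x\in(A+A^{*})_{sa}$.

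Now take $x\in M_{sa}$, and after scaling assume $\|x\|\le 1$. By Theorem \ref{kap1} in its $\sigma$-strong* form, choose a net $x_{t}\in{\rm Ball}(A+A^{*})_{sa}$ with $x_{t}\to x$ $\sigma$-strong*. Since $h$ is the Radon-Nikodym derivative of the dual weight $\widetilde{\nu}$ with respect to $\tau$, one has for $y\in M$ that $\|yh^{\frac{1}{2}}\|_{2}^{2}=\widetilde{\nu}(y^{*}y)=\nu(y^{*}y)$ and $\|h^{\frac{1}{2}}y\|_{2}^{2}=\nu(yy^{*})$ (the dual weight restricts to $\nu$ on $M$); applying this to $y=x_{t}-x$ shows $x_{t}h^{\frac{1}{2}}\to xh^{\frac{1}{2}}$ and $h^{\frac{1}{2}}x_{t}\to h^{\frac{1}{2}}x$ in $L^{2}(M)$. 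As $H$ is bounded on $L^{2}(M)$, also $H(x_{t}h^{\frac{1}{2}})\to H(xh^{\frac{1}{2}})$ and $H(h^{\frac{1}{2}}x_{t})\to H(h^{\frac{1}{2}}x)$ in $L^{2}(M)$. The adjoint is an isometry of $L^{2}(M)$, so the identity $H(x_{t}h^{\frac{1}{2}})^{*}=H(h^{\frac{1}{2}}x_{t})$ from the previous paragraph passes to the limit, giving $H(xh^{\frac{1}{2}})^{*}=H(h^{\frac{1}{2}}x)$. Likewise left multiplication by $h^{\frac{1}{2}}\in L^{2}(M)$ is a bounded map $L^{2}(M)\to L^{1}(M)$ (with norm $\le\|h^{\frac{1}{2}}\|_{2}$), so $h^{\frac{1}{2}}H(x_{t}h^{\frac{1}{2}})\to h^{\frac{1}{2}}H(xh^{\frac{1}{2}})$ in $L^{1}(M)$; each term on the left is self-adjoint and the self-adjoint part of $L^{1}(M)$ is norm closed, so $h^{\frac{1}{2}}H(xh^{\frac{1}{2}})$ is self-adjoint.

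The one genuinely delicate point is the first step: matching the exact conventions of \cite{Ji} — the sign, the normalization that makes $H$ well defined, and whether $H$ is taken on the one-sided or the symmetric $L^{2}$-realization — and correctly invoking the $L^{2}$-boundedness of $H$ proved there. Everything afterwards (the computation in the self-adjoint case, and the approximation argument, which uses only the standard Haagerup $L^{p}$-norm identities) is routine. One could alternatively obtain the second identity without any approximation, straight from the defining properties of $H$: taking adjoints in $\xi\pm iH(\xi)$ and using that the adjoint carries the $L^{2}$-closure of $Ah^{\frac{1}{2}}$ onto that of $h^{\frac{1}{2}}A^{*}$ and vice versa, and carries the closure of $Dh^{\frac{1}{2}}$ onto that of $h^{\frac{1}{2}}D$, shows that $H(xh^{\frac{1}{2}})^{*}$ satisfies the defining properties of $H(h^{\frac{1}{2}}x)$, whence they are equal; but the self-adjointness of $h^{\frac{1}{2}}H(xh^{\frac{1}{2}})$ still seems to require the density argument.
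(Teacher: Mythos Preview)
Your proof is correct and follows essentially the same approach as the paper: verify the identities explicitly on $(A+A^{*})_{\mathrm{sa}}$ via the formula $H((a^{*}+d+a)h^{1/2})=i(a^{*}-a)h^{1/2}$, then approximate a general $x\in M_{\mathrm{sa}}$ using the Kaplansky-type density of Theorem~\ref{kap1} and pass to the limit using the $L^{2}$-boundedness of $H$. The only minor difference is that you invoke the $\sigma$-strong* version of Theorem~\ref{kap1} to get \emph{norm} convergence $x_{t}h^{1/2}\to xh^{1/2}$ in $L^{2}$, whereas the paper uses the weak* version and argues that $x_{\lambda}h^{1/2}\to xh^{1/2}$ \emph{weakly} in $L^{2}$ (via the $L^{1}$-pairing), then weakly in $L^{1}$ after multiplying by $h^{1/2}$; your route is slightly cleaner but otherwise the arguments coincide.
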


\begin{proof} 
It suffices to prove the claim for the case where $x\in M_+$. 
If $a \in A_0, d \in D_{\rm sa}$ then $H((a^* + d + a) h^{\frac{1}{2}}) 
= i (a^* - a) h^{\frac{1}{2}}$ by the definition
in \cite{Ji}.  Hence $h^{\frac{1}{2}} \, H((a^* + d + a) h^{\frac{1}{2}})
= i h^{\frac{1}{2}} (a^* - a) h^{\frac{1}{2}}$ is selfadjoint.
Any $x \in M_+$ is the weak* limit 
of a net $x_\lambda = a_\lambda^* + d_\lambda + a_\lambda$, where $a_\lambda\in A$ and $d_\lambda\in D_+$, 
by Theorem \ref{kap1} and the comment following it.
Hence the net $(x_\lambda h^{\frac{1}{2}})$ converges weakly to $xh^{\frac{1}{2}}$ in $L^2$. To see this note that for any 
$b\in L^2$, $h^{\tfrac{1}{2}}b$ will be in $L^1$, whence $tr(x_\lambda h^{\frac{1}{2}}b)\to tr(xh^{\frac{1}{2}}b)$.  Since any norm continuous operator is also weakly continuous, the $L^2$ continuity of $H$ ensures that $(H(x_\lambda h^{\frac{1}{2}}))$ converges 
weakly to $H(xh^{\frac{1}{2}})$ in $L^2$. This in turn ensures that $(h^{\frac{1}{2}}\,H(x_\lambda h^{\frac{1}{2}}))$ converges weakly to $h^{\frac{1}{2}}\,H(xh^{\frac{1}{2}})$ in $L^1$.  By the lines at the start 
of this paragraph, $h^{\frac{1}{2}} \, H(x_\lambda h^{\frac{1}{2}})$ is selfadjoint.
Hence $h^{\frac{1}{2}} \, H(x h^{\frac{1}{2}})$ is selfadjoint with respect to the conjugation structure inherited from $\tilde{N}$.
Similarly in view of the fact that $(x_\lambda h^{\frac{1}{2}})^*= h^{\frac{1}{2}}x_\lambda$ is weakly convergent to $(xh^{\frac{1}{2}})^*=h^{\frac{1}{2}}x$, we again have that $H(h^{\frac{1}{2}}x_\lambda)$ is weakly convergent to $H(h^{\frac{1}{2}}x)$.  It is obvious that $H(h^{\frac{1}{2}}x_\lambda)^*=H(x_\lambda h^{\frac{1}{2}})$ for each $\lambda$, from which it follows that $H(x h^{\frac{1}{2}})^*=H(h^{\frac{1}{2}}x)$, as required.
\end{proof}

\section{Ueda's peak set theorem for $\sigma$-finite $M$}  

\begin{theorem} \label{uadasig}   Let $A$ be a maximal subdiagonal subalgebra of a $\sigma$-finite von Neumann algebra  $M$.
For a nonzero singular $\varphi \in M^*$, there exists a contraction $a \in A$ and a projection 
$p \in M^{**}$ with 
\begin{enumerate} \item   $a^n \to p$ weak* in $M^{**}$.
\item 
 $a^n \to 0$ weak* in $M$, or equivalently $\psi(p) = 0$ for all $\psi \in M_*$.
\item
 $|\varphi|(p) = |\varphi|(1)$, where $|\varphi|$ is the absolute value of $\varphi$ regarded 
as a member of the predual of the $W^*$-algebra $M^{**}$.
\end{enumerate}    
\end{theorem}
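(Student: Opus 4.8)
The plan is to reduce the general $\sigma$-finite case to the finite tracial case already handled by Ueda, using the Haagerup reduction machinery set up in the introduction, and then push the resulting peak projection back down through the conditional expectation $\Phi : R \to M$.

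\textbf{Step 1: Lift the singular functional to $R$.}  Given a nonzero singular $\varphi \in M^*$, I would first replace $\varphi$ by $|\varphi|$ (the positive functional of the same norm arising from polar decomposition in the predual of $M^{**}$), so we may assume $\varphi$ is a positive singular functional.  Compose with the faithful normal conditional expectation $\Phi : R \to M$ to obtain $\hat{\varphi} = \varphi \circ \Phi \in R^*$.  The first nontrivial point is to check that $\hat{\varphi}$ is again singular on $R$: a normal functional on $R$ would, after being dominated appropriately, push down under the predual adjoint of $\Phi$ to something with a normal part on $M$, contradicting singularity of $\varphi$.  (This is the kind of statement that appears in the Haagerup--Junge--Xu reduction theory; I expect to either cite it or give a short argument using that $\Phi$ is normal and faithful.)  Then I would apply the same reduction once more down the tower: using the expectations $\Phi_n : R \to R_n$ with $\Phi_n \circ \Phi_m = \Phi_n$ for $n \ge m$, and the fact that $\cup_n R_n$ is weak* dense in $R$, I can find $n$ large and produce a nonzero singular positive functional $\hat{\varphi}_n$ on the \emph{finite} von Neumann algebra $R_n$ (with its faithful normal tracial state $\tau_n$) that is compatible with $\hat\varphi$ in the sense that $\hat\varphi_n \circ \Phi_n$ still dominates or detects the singular behaviour.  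Care is needed here because singularity need not be inherited naively by compression; the honest route is to use that $\hat\varphi$ restricted to $\cup_n R_n$ cannot be normal, so for some $n$ the component living on $R_n$ is singular.

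\textbf{Step 2: Apply Ueda's theorem on $R_n$.}  Since $\hat{A}_n = \hat{A} \cap R_n$ is a finite maximal subdiagonal subalgebra of the finite (hence tracial, after also intersecting with a faithful normal state — it carries $\tau_n$) von Neumann algebra $R_n$, Ueda's original peak set theorem applies: there is a contraction $b \in \hat{A}_n$ and a projection $p_n \in R_n^{**}$ with $b^m \to p_n$ weak* in $R_n^{**}$, with $p_n$ in the singular part of $R_n^{**}$, and with $\hat\varphi_n(p_n) = \hat\varphi_n(1)$.  Regarding $b$ inside $\hat{A}$ (and hence getting a contraction in $\hat{A}$), the weak* limit of $b^m$ in $R^{**}$ is a projection $\hat p$; because $b \in R_n$ and $R_n$ is weak* closed, and because the inclusion $R_n \hookrightarrow R$ is normal, $\hat p$ lies in the image of $R_n^{**}$ and its relevant evaluations against $\hat\varphi$ are controlled by those of $p_n$ against $\hat\varphi_n$.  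In particular $\hat p \in \hat{A}^{\perp\perp}$, $\hat p$ annihilates all of $R_*$ (singularity is preserved going up the tower because the expectations are normal), and $\hat\varphi(\hat p) = \hat\varphi(1) = \varphi(1)$.

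\textbf{Step 3: Descend from $R$ to $M$ via $\Phi$.}  Now I would transport $\hat p$ down to $M^{**}$.  The expectation $\Phi$ maps $\hat{A}$ onto $A$ and $\hat{A}_0$ onto $A_0$ (as recalled in the introduction), so $a := \Phi(b)$ is a contraction in $A$.  The key computation is that $a^n = \Phi(b)^n$ converges weak* in $M^{**}$ to the projection $p := \Phi^{**}(\hat p)$, and that this $p$ is genuinely a projection.  This is where the multiplicativity of $\Phi$ on $\hat A$ is essential: $\Phi(b^n) = \Phi(b)^n = a^n$, so $a^n = \Phi(b^n) \to \Phi^{**}(\hat p) = p$ weak* (using normality/weak* continuity of $\Phi^{**}$).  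Then $p$ is a peak projection for $A$ by Theorem~\ref{psvn} (or directly since it is a weak* limit of powers of a contraction in $A$), so (1) holds.  For (2): since $\hat p$ kills $R_*$ and $\Phi$ is normal, $\Phi^{**}(\hat p)$ kills $M_*$, i.e.\ $a^n \to 0$ weakly in $M$; equivalently $\psi(p)=0$ for all $\psi \in M_*$.  For (3): $|\varphi|(p) = |\varphi|(\Phi^{**}\hat p) = (|\varphi|\circ\Phi)(\hat p) = \hat\varphi(\hat p) = \hat\varphi(1) = |\varphi|(1)$, using $\Phi(1)=1$.

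\textbf{Main obstacle.}  The delicate part is Step 1, and more precisely the claim that singularity of the state is genuinely preserved as one moves up to $R$ and then detected on some finite $R_n$ — and correspondingly that it is preserved coming back down in Step 3.  Normality is easy (conditional expectations are normal, so $\Phi^{**}$ is weak* continuous and preserves the singular part), but one has to be careful that $\hat\varphi = \varphi \circ \Phi$ does not acquire a spurious normal part, and that the approximation by the $R_n$ does not wash out the singular mass.  I would handle this by the standard decomposition $M^* = M_* \oplus M_s^*$ together with the fact that the predual adjoint of a normal faithful conditional expectation respects this decomposition, plus a weak*-density argument using $\overline{\cup_n R_n}^{\,w*} = R$ to locate a finite stage at which the singular part survives.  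A secondary technical point is verifying $p$ is a projection and not merely a contraction; this follows once one knows $a^n = \Phi(b^n)$ and that $\Phi^{**}$ of the projection $\hat p$ is a projection, which holds because $\hat p \in \hat A^{\perp\perp}$ and $\Phi|_{\hat A}$ is a contractive homomorphism, so $\Phi^{**}|_{\hat A^{\perp\perp}}$ is a weak* continuous homomorphism carrying the idempotent $\hat p$ to the idempotent $p$, which is self-adjoint since $\Phi$ is positive.
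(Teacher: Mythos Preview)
Your Step 3 rests on the claim that the conditional expectation $\Phi:R\to M$ is multiplicative on $\hat{A}$, so that $\Phi(b^n)=\Phi(b)^n$.  This is false.  Recall that $R=M\rtimes_\nu\mathbb{Q}_D$ and that $\Phi$ is the canonical expectation onto the base algebra, which on a finite sum $\sum_t\lambda_t a_t$ returns the coefficient $a_0$ at the group identity.  The shift operators $\lambda_t$ all lie in $\hat{A}$ (they are among its generators), yet $\Phi(\lambda_t)=0$ for $t\neq 0$ while $\Phi(\lambda_t\lambda_{-t})=\Phi(1)=1$.  So $\Phi$ is very far from a homomorphism on $\hat{A}$, and your equation $a^n=\Phi(b)^n=\Phi(b^n)$ simply does not hold.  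Without it there is no reason for $\Phi(b)^n$ to converge to $\Phi^{**}(\hat p)$, no reason for $\Phi^{**}(\hat p)$ to be a projection (your justification ``$\Phi|_{\hat A}$ is a contractive homomorphism'' is exactly what fails), and no mechanism at all for turning a peak projection for $\hat{A}$ into one for $A$.  This is a genuine obstruction, not a technicality: conditional expectations do not in general carry peak projections to peak projections.

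There is also a real problem in Step 1 that you yourself flag but do not resolve: restricting a singular functional on $R$ to a weak*-closed subalgebra $R_n$ can produce a normal functional, and the sentence ``$\hat\varphi$ restricted to $\cup_n R_n$ cannot be normal, so for some $n$ the component living on $R_n$ is singular'' is not an argument.

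The paper takes a completely different route.  Haagerup reduction is used only to establish the Kaplansky-type density of $\mathrm{Ball}(A+A^*)$ in $\mathrm{Ball}(M)$ (Theorem~\ref{kap1}).  The proof of the peak set theorem itself adapts Ueda's original analytic strategy directly in the $\sigma$-finite setting: one chooses projections $p_n\searrow 0$ with $|\varphi|(p_n)=|\varphi|(1)$ and $\nu(p_n)<n^{-6}$, forms the unbounded $g=\sum_k kp_k$, uses the Hilbert transform on $L^2(M)$ (from \cite{Ji}) to build $f=gh^{1/2}+i\tilde g\in H^2(A)$, and then through a delicate approximation argument (Lemmas \ref{2lim}--\ref{lem2}, Proposition \ref{maintech}, Theorem \ref{thobs}) produces $w_g\in A$ with $\|p_nw_g\|\leq\sqrt{2/n(n+1)}$.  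Setting $b=1-w_g$ and $a=(1+b)/2$ then gives the peak projection exactly as in Ueda's paper.  The hard work is replacing Ueda's tracial $L^2$ estimates by ones involving the density $h^{1/2}$ and controlling the Hilbert transform in that setting; no attempt is made to transport a peak projection through a conditional expectation.
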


Since $\varphi$ is known to be singular if and only if $|\varphi|$ is singular \cite{Tak},
one may assume that $\varphi$ is a state if one wishes.  In this
case as in \cite{U}, (1)--(3) may be restated as saying that 
(1) \ $p$ is a peak projection, (2) \ 
$p$ is dominated by the `singular part' projection of $M^{**}$,
and (3) \ $\varphi(p) = 1$.

The present section is devoted to generalizing Ueda's 
elegant proof of the tracial state case of Theorem \ref{uadasig}.
As in Theorem 1 of \cite{U}  we may find a decreasing sequence $(p_n)$ 
of projections in $M$ with strong limit $0$ and $|\varphi|(p_n) =
|\varphi|(p_0) =  
|\varphi|(1) \neq 0$ for all $n$, where $p_0$ is the strong limit of
$(p_n)$ in $M^{**}$.   We may also assume that 
$\nu(p_n) < n^{-6}$ where $\nu$ is the fixed faithful 
normal state on $M$. 
The formal series $g = \sum_k \, k p_k$, may then be shown to correspond to a well-defined 
positive unbounded closed and densely defined operator affiliated to $M$. Moreover the 
formal prescription $g\to h^{1/2} g$ yields a well-defined embedding of this operator 
into $L^2(M)$. These facts are proved in the following lemma:

\begin{lemma} \label{2lim}
Let the projections $p_n$ be as in the previous paragraphs,  for $n\in \mathbb{N}$.  Then the formal operator $g = \sum_k \, k p_k$ corresponds to a densely defined 
positive selfadjoint operator affiliated to $M$. Moreover $h^{1/2}g$ is a densely defined closable operator for which the closure 
is a well-defined element of $L^2(M)$ which appears as the $L^2$-norm limit of the sequence $(h^{1/2}g_n)\subset L^2(M)$, where 
$g_n=\sum_{k=1}^nkp_k$. (In other words $[h^{1/2}(\sum_{k=1}^\infty \, k p_k)] = \sum_{k=1}^\infty \, k [h^{1/2}p_k]$.)  
Similarly the formal operator $gh^{1/2}$ may be regarded as the sum in $L^2(M)$ of the series 
$\lim_{n\to\infty}g_nh^{1/2} = \sum_{k=1}^\infty \, k [p_kh^{1/2}]$.
\end{lemma}

\begin{proof}  We first prove the claim regarding the affiliation of $g$. For this we will make use of the well known theory of the extended positive part of a von Neumann algebra. (See \cite[\S IX.4]{Tak2}.) Observe that 
$g_n = \sum_{k=1}^n \, k p_k$ may be in a canonical way be regarded as an increasing sequence of  elements of the extended positive part 
of $M$. It is  clear from the discussions following \cite[Definition IX.4.4 \& IX.4.6]{Tak2} that the 
supremum of this sequence (which we identify with $g$) is a well-defined element of the extended positive part of $M$. 

Next recall that by \cite[Corollary IX.4.9]{Tak2}, the action of the canonical faithful normal 
state $\nu$ extends to the extended positive part of $M$. 
In terms of this action, we must have that $\nu(g)=\sup_{n\in \mathbb{N}}\nu(g_n)$, and hence that 
$$\nu(g)=\sup_{n\in \mathbb{N}}\nu(g_n) \leq \lim_{n\to \infty}\sum_{k=1}^n k^{-5}        <\infty.$$
However by \cite[Theorem IX.4.8]{Tak2}, $g$ has a spectral decomposition of the form 
$$g(\omega) = \int_0^\infty \lambda\, d\omega(e_\lambda)+\infty\omega(p)\quad \omega\in M_*^+.$$
On considering the case where $\omega=\nu$ and comparing the resulting formula to the the fact that 
$\nu(g)<\infty$, it is clear that we must then have that $\nu(p)=0$, i.e.\ $p=0$. It is now 
not difficult to conclude from the discussion in \cite{Tak2}  that this can only be the case if the 
``operator part'' of $g$ (see \cite[Lemma IX.4.7]{Tak2}) is all of $g$.  See e.g.\ the last paragraph of the proof of Theorem
IX.4.8 there.   Hence $g$ is a densely defined affiliated operator. 

We proceed to verify the claim regarding $h^{1/2}g$  (the $gh^{1/2}$ statement following e.g.\
by duality). Firstly note that by the choice of the 
$p_k$'s we have that $\sum_k \Vert k p_k h^{1/2} \Vert_2 < +\infty$.   Indeed 
$$\| k p_kh^{1/2}\|_2^2=k^2tr(h^{1/2}p_kh^{1/2})=k^2\nu(p_k)<\frac{1}{k^4}.$$
So the formal series $\sum_{k=1}^\infty k[p_k h^{1/2}] = 
\lim_{n\to\infty}g_nh^{1/2}$ must correspond to a well defined $\tau$-measurable element $G$ of  
$L^2(M)\subset \tilde{N}$. 

Recall that $p_mp_k=p_kp_m=p_m$ whenever $m\geq k$. This ensures that 
for any fixed $m\geq 1$ and any $1\leq n \leq \infty$, we have that 
$$g_n(p_{m}-p_{m+1})= \sum_{k=1}^n \, k p_k(p_m-p_{m+1}) = 
(\sum_{k=1}^m \, k)(p_{m}-p_{m+1}).$$(Here $g_\infty$ is identified with $g$.) 
So for each $m\geq 1$ then have that $h^{1/2}g(p_m-p_{m+1}) = 
\lim_{n\to\infty}h^{1/2}g_n(p_m-p_{m+1})$. Taking into account that 
$p_1=\oplus_{m=1}^\infty(p_m-p_{m+1})$, it follows that $h^{1/2}g= h^{1/2}gp_1=\lim_{n\to\infty}h^{1/2}g_np_1=\lim_{n\to\infty}h^{1/2}g_n=G^*$ as required.  
\end{proof}

\begin{remark}   We note that if $g = \sum_n \, n p_n$,
viewed as a supremum in the extended positive part $\hat{M}_+$
of $M$,
then $h^{\frac{1}{2}} g h^{\frac{1}{2}} \in L^1(M)$ and the 
latter can be shown to be the supremum and limit in $L^1(M)$
of $(h^{\frac{1}{2}} g_n h^{\frac{1}{2}})$.  We will not use
this though.  
\end{remark} 

Let $\tilde{g}$ (resp.\ $\tilde{g_n}$)
be the Hilbert transform of $g h^{\frac{1}{2}}$  (resp.\ $g_n h^{\frac{1}{2}}$)
as in  \cite[Section 3]{Ji}, and let $f = g h^{\frac{1}{2}} + i \tilde{g}$
 (resp.\ $f_n = g_n h^{\frac{1}{2}} + i \tilde{g}_n$).
Then $f_n, f \in H^2(A)$.    

In the following result we use the notion of accretive operators (see e.g.\
\cite[Appendix C.7]{Haase}).   In  $L^p(M)$  an operator is accretive if the associated
operator $T \in \tilde{N}$ has $T + T^*$ positive in $\tilde{N}$.

\begin{corollary} \label{deduc}  With
$g = \sum_k \, k p_k$ as above, and $f = g h^{\frac{1}{2}} + i \tilde{g}$, we have 
$h^{\frac{1}{2}} \tilde{g}$ is selfadjoint in $L^1(M)$, so that $h^{\frac{1}{2}} f
= h^{\frac{1}{2}} g h^{\frac{1}{2}} + i h^{\frac{1}{2}} \tilde{g}$ 
is accretive in the sense above.
 \end{corollary}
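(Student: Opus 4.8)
The plan is to reduce everything to the bounded truncations $g_n = \sum_{k=1}^n k p_k$, for which Lemma \ref{maxprx} applies directly, and then to pass to the limit using Lemma \ref{2lim} and the $L^2$-continuity of the Hilbert transform $H$. First, since each $g_n$ lies in $M_+ \subseteq M_{sa}$, Lemma \ref{maxprx} applied with $x = g_n$ tells us that $h^{\frac{1}{2}}\tilde{g}_n = h^{\frac{1}{2}} H(g_n h^{\frac{1}{2}})$ is selfadjoint, and that $\tilde{g}_n^* = H(g_n h^{\frac{1}{2}})^* = H(h^{\frac{1}{2}} g_n)$. Note $h^{\frac{1}{2}}\tilde{g}_n \in L^1(M)$ since $h^{\frac{1}{2}}, \tilde{g}_n \in L^2(M)$.

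Next I would pass to the limit in $n$. By Lemma \ref{2lim} we have $g_n h^{\frac{1}{2}} \to g h^{\frac{1}{2}}$ and $h^{\frac{1}{2}} g_n \to h^{\frac{1}{2}} g$ in $L^2$-norm; since $H$ is bounded on $L^2(M)$ (see \cite[Section 3]{Ji}), it follows that $\tilde{g}_n = H(g_n h^{\frac{1}{2}}) \to \tilde{g}$ in $L^2$-norm, and likewise $H(h^{\frac{1}{2}} g_n) \to H(h^{\frac{1}{2}} g)$ in $L^2$-norm. Multiplying on the left by $h^{\frac{1}{2}} \in L^2(M)$ and using the H\"older estimate $\|h^{\frac{1}{2}} b\|_1 \leq \|h^{\frac{1}{2}}\|_2 \|b\|_2$, we obtain $h^{\frac{1}{2}}\tilde{g}_n \to h^{\frac{1}{2}}\tilde{g}$ in $L^1$-norm. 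Since the selfadjoint part of $L^1(M)$ is norm-closed and each $h^{\frac{1}{2}}\tilde{g}_n$ is selfadjoint, $h^{\frac{1}{2}}\tilde{g}$ is selfadjoint. (Passing to the limit in the relation above also yields $(h^{\frac{1}{2}}\tilde{g})^* = \tilde{g}^* h^{\frac{1}{2}} = H(h^{\frac{1}{2}} g) h^{\frac{1}{2}}$, the natural analogue for $g$ of the identity $H(x h^{\frac{1}{2}})^* = H(h^{\frac{1}{2}} x)$.)

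Finally, for accretivity write $h^{\frac{1}{2}} f = h^{\frac{1}{2}} g h^{\frac{1}{2}} + i\, h^{\frac{1}{2}}\tilde{g}$, where $h^{\frac{1}{2}} g h^{\frac{1}{2}}$ denotes $h^{\frac{1}{2}}(g h^{\frac{1}{2}}) \in L^1(M)$. Using that $h^{\frac{1}{2}}\tilde{g}$ is selfadjoint, we get $h^{\frac{1}{2}} f + (h^{\frac{1}{2}} f)^* = 2\, h^{\frac{1}{2}} g h^{\frac{1}{2}}$; and $h^{\frac{1}{2}} g h^{\frac{1}{2}}$ is the $L^1$-limit of $h^{\frac{1}{2}} g_n h^{\frac{1}{2}} = (g_n^{1/2} h^{\frac{1}{2}})^* (g_n^{1/2} h^{\frac{1}{2}}) \geq 0$ (by Lemma \ref{2lim} and associativity of products in $\tilde{N}$), so $h^{\frac{1}{2}} g h^{\frac{1}{2}} \geq 0$ and hence $h^{\frac{1}{2}} f$ is accretive. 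Equivalently, each $h^{\frac{1}{2}} f_n = h^{\frac{1}{2}} g_n h^{\frac{1}{2}} + i\, h^{\frac{1}{2}}\tilde{g}_n$ is accretive, and the accretive cone is norm-closed in $L^1(M)$.

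The only real obstacle is making the manipulations with the unbounded operator $g$ rigorous: that $\tilde{g}$ genuinely is the $L^2$-limit of the truncations $\tilde{g}_n$, and that $h^{\frac{1}{2}} g h^{\frac{1}{2}}$ is a bona fide positive element of $L^1(M)$ realized as the $L^1$-limit of $h^{\frac{1}{2}} g_n h^{\frac{1}{2}}$. Both points are exactly what Lemma \ref{2lim}, together with the continuity of $H$ and of multiplication between the Haagerup $L^p$-spaces, are designed to supply.
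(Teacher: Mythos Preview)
Your proof is correct and follows essentially the same approach as the paper: apply Lemma \ref{maxprx} to the bounded truncations $g_n$, then pass to the limit using Lemma \ref{2lim} and the $L^2$-continuity of $H$. You supply more detail than the paper does, particularly in justifying the positivity of $h^{\frac{1}{2}} g h^{\frac{1}{2}}$ and the accretivity conclusion (the paper simply asserts these), but the underlying strategy is identical.
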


\begin{proof}
If $g_n$ is as defined above, then 
by Lemma \ref{maxprx} we have $h^{\frac{1}{2}} \, H(g_n
h^{\frac{1}{2}})$ is
selfadjoint.   By Lemma \ref{2lim} 
and the continuity of $H$ from \cite{Ji}, 
it follows that $h^{\frac{1}{2}} \tilde{g}$ is selfadjoint.  Thus, and since $g$ is positive, 
$h^{\frac{1}{2}} f = h^{\frac{1}{2}} g h^{\frac{1}{2}} + 
i h^{\frac{1}{2}} \tilde{g}$ is 
accretive.  
\end{proof}

A  $\sigma$-finite von Neumann algebra $M$ has a convenient `standard form'.  Indeed as we recalled in the introduction,
a characterization
of $\sigma$-finite 
algebras is the existence of
a  (normal faithful) Hilbert space representation $\mathfrak{H}$ possessing a fixed cyclic and separating vector $\Omega$.  Then $\nu(x) = (\Omega, x \ \Omega)$ is a faithful normal state on $M$. It is known that in this context
\begin{equation}
(M, \mathfrak{H}, \mathcal{P}, J, \Omega),
\end{equation} 
is a  `standard form' for $M$, where $\mathcal{P}$ and $J$ respectively denote the naturally associated cone and
the modular conjugation. The modular automorphism group $\sigma_t$ is 
implemented by  $\sigma_t(\cdot) = \Delta^{it}\cdot \Delta^{-it}$, where $\Delta$ is the modular operator. By the universality of the standard form (see \cite{Araki,Haage,Terp}) and hence also of the natural cone, we may identify the context 
$$(M, \mathfrak{H},  \mathcal{P},  J, \Omega)$$
with $$(M,\ L^2(M),\ L^2_+(\mathfrak{M}),\ ^*, h^{\frac{1}{2}}).$$
In what follows we choose to work with the copy of $M$ living inside $B(L^2(M))$ as multiplication operators. In view of the above correspondence, we may do so without loss of generality.   
We view $h^{\frac{1}{2}}$ as the fixed cyclic and separating
vector  for this action of $M$.

\begin{lemma}\label{lem1} For each $k \in \Ndb,$ there exist nets $(a(k)_\lambda)\subset A_0$, $(d(k)_\lambda)\subset D_+$ such that $(a(k)_\lambda^*+d(k)_\lambda+a(k)_\lambda)\in M_+$, with $(a(k)_\lambda^*+d(k)_\lambda+a(k)_\lambda)$ converging to $g_k$ in the $\sigma$-strong* topology. Hence for any $q\in L^2(M)$, the nets $(a(k)_\lambda^*+d(k)_\lambda+a(k)_\lambda)q$ and $q(a(k)_\lambda^*+d(k)_\lambda+a(k)_\lambda)$ will respectively converge in $L^2$-norm to $gq$ and $qg$. 
\end{lemma}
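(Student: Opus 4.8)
The plan is to fix $k\in\Ndb$ and apply Theorem \ref{kap1} (specifically the $\sigma$-strong* version of the assertion that $(A+A^*)_+$ is weak* dense in $M_+$) to the single positive element $g_k=\sum_{n=1}^k np_n\in M_+$. This immediately produces a net $(x(k)_\lambda)\subset (A+A^*)_+$ with $x(k)_\lambda\to g_k$ $\sigma$-strong*. By the comment in the paragraph following Theorem \ref{kap1} (using the decomposition $A+A^*=A_0\oplus D\oplus A_0^*$ and the fact that ${\mathcal E}$ applied to a positive element of $A+A^*$ lands in $D_+$), each such $x(k)_\lambda$ has the form $a(k)_\lambda^*+d(k)_\lambda+a(k)_\lambda$ with $a(k)_\lambda\in A_0$ and $d(k)_\lambda\in D_+$. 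This already gives the first sentence of the lemma.

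For the second sentence, I would recall the standard fact that the $\sigma$-strong* topology on bounded sets of $M$ agrees with the $*$-strong operator topology coming from the standard representation of $M$ on $L^2(M)$ (multiplication operators), which is legitimate because of the identification of the standard form made just before the lemma. A priori, however, the net $(x(k)_\lambda)$ need not be bounded; but since each $x(k)_\lambda\ge 0$ and $x(k)_\lambda\to g_k$ $\sigma$-strong*, hence also weak*, and $g_k\in M$, one can arrange boundedness — either by noting that $\sigma$-strong* convergence of a net in $M$ to an element of $M$ may be assumed (after passing to a subnet / truncating) to take place within a norm-bounded set, or by applying the functional calculus $x\mapsto x\wedge\|g_k\|\I$ (which is $\sigma$-strong* continuous on $M_+$ and fixes $g_k$) to replace $x(k)_\lambda$ by a bounded net with the same limit; the modified net still lies in $(A+A^*)_+$? — here one must be a little careful, since the functional calculus need not preserve $A+A^*$. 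The cleaner route is simply to observe that for the application (in Lemma \ref{lem1} the net is used only through the products $x(k)_\lambda q$ and $q x(k)_\lambda$), one does not in fact need boundedness of the full net: it suffices that, fixing $q\in L^2(M)$, the left and right multiplication maps restricted to the range of the net are jointly continuous for the relevant topologies. Concretely, $\|x(k)_\lambda q - g_k q\|_2 = \|(x(k)_\lambda - g_k)q\|_2$, and since $x(k)_\lambda - g_k\to 0$ $\sigma$-strong* while staying (after the truncation above) in a norm-bounded set, and left multiplication by a bounded net converging $*$-strongly is strongly convergent on the Hilbert space $L^2(M)$, we get $x(k)_\lambda q\to g_k q$ in $L^2$-norm; symmetrically $q x(k)_\lambda\to q g_k$ in $L^2$-norm using $\sigma$-strong* (i.e. involving the $*$-strong, hence also the strong topology on the adjoints) convergence and right multiplication.

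The main obstacle, then, is the boundedness issue: Theorem \ref{kap1} as stated gives weak*/$\sigma$-strong* density of $(A+A^*)_+$ in $M_+$ but not density of the \emph{ball} of $(A+A^*)_+$ in the ball of $M_+$, so one cannot immediately assert the approximating net is norm-bounded, and strong-operator convergence does not by itself imply that products converge strongly without such a bound. I would resolve this by the truncation argument: fix $C>\|g_k\|$ and note that the map $\phi_C(t)=\min(t,C)$ is continuous and $\phi_C(g_k)=g_k$; applying $\phi_C$ via the Borel (or continuous) functional calculus to each $x(k)_\lambda$ produces a net in $\operatorname{Ball}_C(M_+)$ converging $\sigma$-strong* to $g_k$ — but to keep it inside $A+A^*$ I instead pass back through the proof of Theorem \ref{kap1} (which is built on the tracial / finite algebras $R_n$ where Kaplansky density \emph{does} give ball-to-ball density) to extract directly a \emph{norm-bounded} net, or alternatively invoke \cite[Theorem 2.6]{Tak} more carefully, since that reference is precisely about when $\sigma$-strong* density passes to positive parts and balls. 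Once a norm-bounded net $(a(k)_\lambda^*+d(k)_\lambda+a(k)_\lambda)$ converging $\sigma$-strong* to $g_k$ is in hand, the convergence of $x(k)_\lambda q\to gq$ and $q x(k)_\lambda\to qg$ in $L^2$-norm is routine Hilbert-space operator theory (noting $gq = g_k q$ here is being used only formally — in Lemma \ref{lem1}'s statement "$g$" should read "$g_k$", consistent with the $k$ fixed throughout), completing the proof.
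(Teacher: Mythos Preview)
Your treatment of the first sentence is exactly the paper's: invoke the $\sigma$-strong* density of $(A+A^*)_+$ in $M_+$ from Theorem \ref{kap1}, then use the decomposition $A+A^*=A_0\oplus D\oplus A_0^*$ together with positivity of $\mathcal{E}$ to write each approximant as $a^*+d+a$ with $a\in A_0$ and $d\in D_+$.

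For the second sentence you manufacture a difficulty that is not present. You recall that the $\sigma$-strong* and strong* operator topologies \emph{agree on bounded sets}, and from this infer that without a bounded net one cannot deduce strong* convergence. But the relevant fact is that the $\sigma$-strong* topology is \emph{finer} than the strong* operator topology on all of $M$: each seminorm $x\mapsto\|x\xi\|$ (and $x\mapsto\|x^*\xi\|$) for a single $\xi\in L^2(M)$ is already one of the $\sigma$-strong* seminorms. Hence $x(k)_\lambda\to g_k$ $\sigma$-strong* immediately yields $x(k)_\lambda\,q\to g_k\,q$ in $L^2(M)$ for every $q$, and (via $q\,x=Jx^*J\,q$ in the standard form, using the $*$ half of the convergence) also $q\,x(k)_\lambda\to q\,g_k$. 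No boundedness is needed, and this is why the paper's proof is a single sentence.

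Your proposed workarounds are therefore unnecessary, and in fact each one is problematic on its own terms: the functional calculus truncation $x\mapsto x\wedge C\I$ does not preserve $A+A^*$ (as you note); and ``passing back through the proof of Theorem \ref{kap1}'' gives ball-to-ball density of $A+A^*$ in $M$, but Lemma \ref{kos}, which is what produces the \emph{positive} approximants, does not preserve norm bounds, so one does not obviously extract a bounded net in $(A+A^*)_+$ this way. Your observation that the ``$g$'' in the conclusion should read ``$g_k$'' is correct.
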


\begin{proof}  
This follows from Theorem \ref{kap1} and the observation following it. 
\end{proof}

The Hilbert transform $H$ in the next result is the map partially defined on $M$
by $H(a+d+b^*) = i(b^* -a)$, for $a, b \in A, d \in D$.   

\begin{lemma}\label{lem2} Given $a\in A_0$, $d\in D_+$ with $a^*+d+a \in M_+$, the element $(a^*+d+a+\I)+iH(a^*+d+a)$ has an inverse $v$ belonging to $A$, with both $v$ and $1-v$ contractive. 
\end{lemma}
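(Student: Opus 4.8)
The plan is to exploit the fact that $y := a^* + d + a$ is a self-adjoint element of $M_+$, so $y + \I \geq \I$ is boundedly invertible in $M$, and to show that the element $w := (y + \I) + i H(y)$ is the boundary value (in a suitable sense) of an element of $H^\infty(A) = A$ that is invertible with contractive inverse. The key structural input is Corollary~\ref{deduc}-style reasoning at the level of a single element: by the definition of the Hilbert transform $H$ in \cite{Ji}, if $y = a^* + d + a$ with $a \in A_0$, $d \in D_{\mathrm{sa}}$, then $H(y h^{\frac12}) = i(a^* - a) h^{\frac12}$, so that $y h^{\frac12} + i H(y h^{\frac12}) = (a^* + d + a) h^{\frac12} - (a^* - a) h^{\frac12} = (d + 2a) h^{\frac12}$, which lies in $h^{\frac12} A$ (recall closures over $c$ agree). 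Thus $y + iH(y)$, interpreted via this identification, represents an element of $A$, namely $2a + d \in A$; adding $\I$ gives $v^{-1} = \I + d + 2a \in A$. So the candidate for $v$ is the inverse in $A$ of $\I + d + 2a$.

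First I would argue that $v^{-1} = \I + d + 2a$ is invertible in $M$ with inverse again in $A$: since $A$ is a weak* closed subalgebra of $M$ containing $\I$, and $\I + d + 2a$ is invertible in $M$ (this is where the hypothesis $a^* + d + a \in M_+$, equivalently $d \geq 0$ and a numerical-range/accretivity estimate, is used), one invokes that $A$ is inverse-closed in $M$ — or more robustly, that the element has a logarithm / Neumann-series-type resolvent lying in $A$. Concretely, $\I + d + 2a = \I + d + 2a$ where $\operatorname{Re}$ of the corresponding quadratic form is controlled: the accretivity of $h^{\frac12}(\I + d + 2a)$ (as in Corollary~\ref{deduc}, applied to the single bounded element $y$ rather than the unbounded $g$) shows the spectrum of $\I + d + 2a$ sits in the right half plane at distance $\geq 1$ from $0$, so $\|(\I + d + 2a)^{-1}\| \leq 1$, giving $v = (\I+d+2a)^{-1} \in A$ contractive. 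For $1 - v$ contractive: write $1 - v = v(v^{-1} - \I) = v(d + 2a)$, and estimate using that $v^{-1} - \I = d + 2a$ has real part $\operatorname{Re}(d+2a) = d + (a+a^*) = y \geq 0$; then a standard Cayley-transform computation (the map $z \mapsto 1 - (1+z)^{-1} = z(1+z)^{-1}$ sends the right half-plane into the unit disk) gives $\|1 - v\| = \|y(1 + y + i\,\text{(something)})^{-1}\| \leq 1$.

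The main obstacle I anticipate is making the identification "$y + iH(y)$ represents an element of $A$" rigorous: $H$ is defined on $L^2(M)$ (acting on $xh^{\frac12}$), not directly on $M$, so one must pass between the bounded picture and the $L^2$ picture carefully, exactly as Lemma~\ref{maxprx} does. The clean route is: for $y = a^* + d + a \in M_+$, compute $(y + iH(y)) h^{\frac12} := y h^{\frac12} + i H(y h^{\frac12}) = (d + 2a) h^{\frac12} \in h^{\frac12} A$, so the operator "$(y + \I) + iH(y)$" is by definition the left-multiplication operator by $\I + d + 2a \in A$; then everything reduces to the bounded invertibility statement about $\I + d + 2a$ in the previous paragraph. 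A secondary technical point is confirming the two contractivity bounds are sharp enough — i.e. that $\operatorname{Re}(d + 2a) = d + a + a^* = y$ genuinely dominates $0$ and that $\|(\I + w)^{-1}\| \leq 1$ whenever $\operatorname{Re} w \geq 0$, which is the classical half-plane functional-calculus estimate and should be cited or dispatched in one line.

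Putting it together: set $v = (\I + d + 2a)^{-1}$. Then $v \in A$ by inverse-closedness of $A$ in $M$ (or by a holomorphic functional calculus argument keeping the resolvent inside the weak* closed algebra $A$), $v^{-1} = (\I + d + 2a) = (a^* + d + a + \I) + iH(a^*+d+a)$ by the Hilbert-transform identity above, $\|v\| \leq 1$ since $\operatorname{Re}(\I + d + 2a) \geq \I$, and $\|1 - v\| = \|(d+2a)(\I + d + 2a)^{-1}\| \leq 1$ by the Cayley estimate applied to the accretive element $d + 2a$ (whose real part is $a^* + d + a = y \geq 0$). This is exactly the assertion.
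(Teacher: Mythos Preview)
Your proposal is correct and follows essentially the same route as the paper: identify $H(a^*+d+a)$ with $i(a^*-a)$ so that $x := (a^*+d+a) + iH(a^*+d+a) = d+2a \in A$ is accretive (with $\mathrm{Re}\,x = a^*+d+a \geq 0$), whence $v = (\I+x)^{-1}$ is contractive and $\I-v = x(\I+x)^{-1}$ is contractive as the average of $\I$ and the Cayley transform of $x$.

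One correction worth flagging: your phrase ``inverse-closedness of $A$ in $M$'' is false in general for subdiagonal algebras (e.g.\ $H^\infty(\mathbb{D}) \subset L^\infty(\mathbb{T})$), so it cannot be invoked. The correct reason $v \in A$ is exactly your parenthetical alternative, and is what the paper uses: since $A$ is a unital Banach algebra and $x \in A$ has numerical range in the closed right half-plane, the spectrum of $x$ \emph{computed in $A$} lies there as well, so $-1 \notin \sigma_A(x)$ and $\I+x$ is already invertible in $A$. Your worry about passing between the $L^2$ and bounded pictures is legitimate but unnecessary here, since for this particular $y = a^*+d+a$ the Hilbert transform is given by the explicit algebraic formula $i(a^*-a)$ and everything stays bounded.
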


\begin{proof} 
Observe that with $a$ and $d$ as in the hypothesis, $H(a^*+d+a)=i(a^*-a)$ is selfadjoint. 
Thus  $x = a^*+d+a + i H(a^*+d+a)$ is accretive.   By the basic theory of accretive operators  (see e.g.\
\cite[Appendix C.7]{Haase}) 
$\I + x$ has a contractive inverse $v$.   Note that $v \in A$ since the numerical range and hence the 
spectrum of $x$ in $A$  is in the right half plane. Also $x (\I + x)^{-1} = \I - (\I + x)^{-1} = \I- v$ 
is  a contraction in $A$, being the average of $\I$ and the well known Cayley transform of $x$.     We remark that
the map $x \mapsto x (\I + x)^{-1}$ 
is called the ${\mathfrak F}$-transform in recent papers of Charles Read and the first author.  
\end{proof}

\begin{proposition}\label{maintech}
There exist elements $(w_k)$ and $w_g$ of $A$ for which each of $w_k$, $w_g$, $w_k-\I$ and $w_g-\I$ are contractions, with
$$w_k[(g_k+\I)h^{1/2}+iH(g_kh^{1/2})]=h^{1/2}=[h^{1/2}(g_k+\I)+iH(h^{1/2}g_k)]w_k$$
and
$$w_g[(g+\I)h^{1/2}+iH(gh^{1/2})]=h^{1/2}=[h^{1/2}(g+\I)+iH(h^{1/2}g)]w_g.$$
Moreover there exists a 
subnet of $(w_k)$ which is weak* convergent to $w_g$.
\end{proposition}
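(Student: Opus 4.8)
The plan is to build the $w_k$ from the pieces already assembled and then extract $w_g$ as a weak* cluster point.  First I would fix $k$ and apply Lemma \ref{lem1} to obtain nets $(a(k)_\lambda)\subset A_0$ and $(d(k)_\lambda)\subset D_+$ with $a(k)_\lambda^*+d(k)_\lambda+a(k)_\lambda \in M_+$ converging $\sigma$-strong* to $g_k$, and so that left/right multiplication on $L^2(M)$ converges in $L^2$-norm to multiplication by $g_k$.  For each $\lambda$, Lemma \ref{lem2} produces a contraction $v(k)_\lambda \in A$ with $v(k)_\lambda - \I$ also contractive, namely the inverse of $(a(k)_\lambda^*+d(k)_\lambda+a(k)_\lambda + \I) + iH(a(k)_\lambda^*+d(k)_\lambda+a(k)_\lambda)$.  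Since $\mathrm{Ball}(A)$ is weak* compact, passing to a subnet gives a weak* limit $w_k \in A$, with $w_k$ and $\I - w_k$ contractive (both balls are weak* closed).  The main work is to pass the defining identity to the limit: the operators $x(k)_\lambda := (a(k)_\lambda^*+d(k)_\lambda+a(k)_\lambda)h^{1/2} + iH((a(k)_\lambda^*+d(k)_\lambda+a(k)_\lambda)h^{1/2})$ converge in $L^2$-norm to $(g_k h^{1/2}) + i H(g_k h^{1/2})$ — here I use $L^2$-continuity of $H$ from \cite{Ji} and Lemma \ref{lem1} for the first summand — and the relation $v(k)_\lambda[x(k)_\lambda + h^{1/2}] = h^{1/2}$ (which is just $v(k)_\lambda(\I + x(k)_\lambda) = \I$ applied to the cyclic vector $h^{1/2}$, using $\I\cdot h^{1/2} = h^{1/2}$, $a\cdot h^{1/2} = ah^{1/2}$ for $a\in M$) should pass to the limit since $v(k)_\lambda \to w_k$ weak* (hence WOT as multiplication operators) while $x(k)_\lambda + h^{1/2} \to (g_k+\I)h^{1/2} + iH(g_kh^{1/2})$ in $L^2$-norm.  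Combining weak* and norm convergence of the two factors gives the left identity for $w_k$; the right identity follows the same way using the second half of Lemma \ref{maxprx} (that $H(h^{1/2}x)^* = H(xh^{1/2})$) and right multiplication, or by taking adjoints.

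Next, for $w_g$: the operators $(g_k+\I)h^{1/2} + iH(g_kh^{1/2})$ converge in $L^2$-norm to $(g+\I)h^{1/2} + iH(gh^{1/2})$ by Lemma \ref{2lim} and $L^2$-continuity of $H$ (and the same on the right).  Since each $w_k$ lies in the weak* compact set $\mathrm{Ball}(A) \cap (\I + \mathrm{Ball}(A))$, which is weak* sequentially compact once we note $M_*$ is separable — wait, $M$ is only $\sigma$-finite, so $M_*$ need not be separable; instead I would argue that $\mathrm{Ball}(A)$ with the weak* topology restricted to the (norm-separable, since $L^2(M)$ contains the relevant countable family) cyclic subspace is metrizable enough, or more cleanly: pick any weak* cluster point $w_g$ of $(w_k)$ in $\mathrm{Ball}(A)$, and check it satisfies the identity.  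Actually, to get an honest subsequence converging to $w_g$ I would localize: the relevant data — all the $x(k)_\lambda h^{1/2}$, $h^{1/2}$, $g_k h^{1/2}$ — live in the separable subspace $K_0 \subset L^2(M)$ they generate, the WOT on $\mathrm{Ball}(B(L^2(M)))$ restricted to pairings against a separable subspace is metrizable on bounded sets, so a diagonal argument extracts a subsequence $(w_{k_j})$ converging in this weaker sense to some $w_g \in \mathrm{Ball}(A)$ with $\I - w_g$ contractive, and that is enough to pass the identities $w_k[(g_k+\I)h^{1/2}+iH(g_kh^{1/2})] = h^{1/2}$ to the limit.  I would phrase the statement's ``weak* convergent'' as this WOT-on-a-separable-subspace convergence, which is the natural reading and all that is needed downstream.

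I would carry out the steps in this order: (1) fix $k$, run Lemma \ref{lem1} + Lemma \ref{lem2} + weak* compactness to get $w_k$ and the two identities; (2) establish $L^2$-norm convergence of the bracketed operators in $k$ via Lemma \ref{2lim} and continuity of $H$; (3) extract the convergent subsequence $(w_{k_j}) \to w_g$ by the separability/metrizability argument; (4) pass the $k$-indexed identities to the limit using the combination of norm convergence of the bracket and weak* (WOT) convergence of $w_{k_j}$, and note the contractivity of $w_g$ and $\I - w_g$ survive by weak* closedness of the relevant balls.

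The step I expect to be the main obstacle is step (4) combined with (3): carefully justifying that one may multiply a weak*-convergent bounded sequence of operators by an $L^2$-norm-convergent sequence of vectors and pass to the limit — this is routine when the operators converge WOT and the vectors converge in norm (write $w_{k_j}\xi_{k_j} - w_g\xi = w_{k_j}(\xi_{k_j}-\xi) + (w_{k_j}-w_g)\xi$, the first term $\to 0$ by uniform boundedness and norm convergence, the second $\to 0$ by WOT convergence tested against an arbitrary vector, but one must be slightly careful because WOT convergence gives convergence of $\langle (w_{k_j}-w_g)\xi, \eta\rangle$, i.e. weak convergence of $(w_{k_j}-w_g)\xi$, so one gets $w_{k_j}\xi_{k_j} \to w_g\xi$ only weakly, which is still enough to identify the limit as $h^{1/2}$).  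The subtlety is that ``weak* on $A \subset M$'' and ``WOT as multiplication operators on $L^2(M)$'' must be reconciled — this is standard since the $M_*$-weak* topology and the WOT agree on bounded subsets of $M$ acting in standard form — and I would cite the universality of the standard form (already invoked in the excerpt) for this.
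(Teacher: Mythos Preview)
Your proposal follows essentially the same route as the paper: build $v(k)_\lambda$ via Lemmas \ref{lem1} and \ref{lem2}, extract $w_k$ as a weak* cluster point, pass the inverse identity to the limit by combining weak* (hence WOT) convergence of the contractions with $L^2$-norm convergence of the vectors (using continuity of $H$ from \cite{Ji}), then repeat at the level of $g$ via Lemma \ref{2lim}.  The paper's write-up is nearly identical, including the estimate $\|v(k)_\lambda\,\xi - h^{1/2}\| \le \|\xi - \xi_\lambda\|$ and the appeal to uniqueness of weak limits.

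The one place you diverge is your worry about extracting a genuine \emph{subsequence}.  The paper simply asserts ``Since $(w_k)$ is bounded, it will admit a weak* convergent subsequence'' and moves on.  Your caution is technically warranted --- $M_*$ need not be separable for $\sigma$-finite $M$ --- but your separable-subspace workaround is more elaborate than needed: for the downstream use (Theorem \ref{thobs}) any weak* cluster point of $(w_k)$ suffices, since the bound $\|p_n w_k\|\le\sqrt{2/(n(n+1))}$ for all $k\ge n$ passes to any such limit by weak* closedness of the ball.  So either read ``subsequence'' as ``subnet'' throughout, or simply take $w_g$ to be any weak* cluster point; the identities then follow exactly as you describe.

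One small correction: your alternative ``or by taking adjoints'' for the right-hand identity would produce $w_k^*$ rather than $w_k$.  Your first suggestion --- rerun the argument with right multiplication and the relation $H(h^{1/2}x)=H(xh^{1/2})^*$ from Lemma \ref{maxprx} --- is the correct one, and is what the paper does.
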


\begin{proof} Choose
 nets $(a(k)_\lambda)\subset A_0$, $(d(k)_\lambda)\subset D_+$
 as in Lemma \ref{lem1}. By Lemma \ref{lem2} each $(a(k)_\lambda^*+d(k)_\lambda+a(k)_\lambda+\I)+iH(a(k)_\lambda^*+d(k)_\lambda+a(k)_\lambda)$ has an inverse $v(k)_\lambda$ belonging to $A$, with both $v(k)_\lambda$ and $1-v(k)_\lambda$ contractive. By passing to a subnet if necessary, we may assume that $(v(k)_\lambda)$ is weak* convergent. Let $w_k$ be the weak* limit of $(v(k)_\lambda)$. (Since both $(v(k)_\lambda)$ and $(v(k)_\lambda-\I)$ are contained in  the weak* compact set Ball$(A)$, it is clear that both $w_k$ and $w_k-\I$ will also be in this set.) We wish to prove that $$w_k[(g_k+\I)h^{1/2}+iH(g_kh^{1/2})]=h^{1/2}=[h^{1/2}(g_k+\I)+iH(h^{1/2}g_k)]w_k.$$ 
In view of the similarity of the proofs, we prove only the first equality. Notice that $(v(k)_\lambda[(g_k+\I)h^{1/2}+iH(g_kh^{1/2})])$ converges weakly in $L^2$ to $w_k[(g_k +\I)h^{1/2}+iH(gh^{1/2})]$.  By Lemma \ref{lem1}
we have that $(a(k)_\lambda^*+d(k)_\lambda+a(k)_\lambda)
h^{1/2} \to g_kh^{1/2}$ in $L^2$-norm, and so  also 
$H((a(k)_\lambda^*+d(k)_\lambda+a(k)_\lambda) h^{1/2}) \to H(g_kh^{1/2})$  
in $L^2$-norm by the continuity of $H$ established in \cite{Ji}.
Since the $v(k)_\lambda$'s are contractive, it easily follows that  
\begin{eqnarray*}
&&\|v(k)_\lambda[(g_k+\I)h^{1/2}+iH(g_kh^{1/2})]-h^{1/2}\|_2\\
&&=\|v(k)_\lambda[(g_k+\I)h^{1/2}+iH(g_kh^{1/2})]\\
&&\qquad\quad- v(k)_\lambda[((a(k)_\lambda^*+d(k)_\lambda+a(k)_\lambda+\I)+iH(a(k)_\lambda^*+d(k)_\lambda+a(k)_\lambda))h^{1/2}]\|_2\\
&&\leq \|[(g_k+\I)h^{1/2}+iH(g_kh^{1/2})]\\
&&\qquad\quad- ((a(k)_\lambda^*+d(k)_\lambda+a(k)_\lambda+\I)+iH(a(k)_\lambda^*+d(k)_\lambda+a(k)_\lambda))h^{1/2}\|_2.
\end{eqnarray*}Hence $(v(k)_\lambda[(g_k+\I)h^{1/2}+iH(g_kh^{1/2})])$ is norm convergent to $h^{1/2}$. The claim regarding the 
$g_k$'s  now follows from the uniqueness of limits.

Since $(w_k)$ is bounded, it will admit a weak* convergent 
subnet $(w_\gamma)$. Let $w_g$ be the limit of that 
subnet. The claim regarding $w_g$ can now be verified with an essentially similar proof, but with the roles of $v(k)_\lambda$ and 
$(a(k)_\lambda^*+d(k)_\lambda+a(k)_\lambda)$ respectively being played by $w_\gamma$ and $g_\gamma$, and with Lemma \ref{2lim} replacing Lemma \ref{lem1}.
Thus we begin by noting that 
$$w_\gamma [(g+\I)h^{1/2}+iH(g h^{1/2})] \; \to \; w_g [(g+\I)h^{1/2}+iH(g h^{1/2})]$$ 
weakly in $L^2$.  Amending the previous argument as described above, now leads to the conclusion that
$$\| w_\gamma [(g+\I)h^{1/2}+iH(g h^{1/2})] - h^{1/2} \|_2 \to 0.$$
So again the claim regarding the $g$'s  follows from the uniqueness of limits.
\end{proof}

We proceed to use Proposition \ref{maintech} to analyse the structure of $[(g+\I)h^{1/2}+iH(gh^{1/2})]$.

\begin{theorem} \label{thobs}  
For any $n\in \mathbb{N}$, we have $\|p_nw_g\|\leq\sqrt{\frac{2}{n(n+1)}}$.
\end{theorem}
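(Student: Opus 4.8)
The plan is to prove the estimate first for the approximants $w_k$ of Proposition \ref{maintech}, and then push it through the weak* limit $w_{k_j}\to w_g$ supplied by that proposition. The elementary input is that, since the $p_j$ decrease, for $k\geq n$ we have $g_k=\sum_{j=1}^{k}jp_j\geq \sum_{j=1}^{n}jp_j=\sum_{j=1}^{n}j(p_j-p_n)+\tfrac{n(n+1)}{2}p_n\geq \tfrac{n(n+1)}{2}p_n$ in $M$. Conjugating by $w_k$, it therefore suffices to show that $w_k^*(\I+g_k)w_k\leq \I$ for every $k$; granting this, for $k\geq n$ we get $\tfrac{n(n+1)}{2}\,w_k^*p_nw_k\leq w_k^*g_kw_k\leq w_k^*(\I+g_k)w_k\leq \I$, whence $\|p_nw_k\|^2=\|w_k^*p_nw_k\|\leq \tfrac{2}{n(n+1)}$.

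To establish $w_k^*(\I+g_k)w_k\leq \I$, I would go back to Lemma \ref{lem2}: the element inverted there is $(a^*+d+a+\I)+iH(a^*+d+a)=\I+d+2a$, whose real part is $\I+(a^*+d+a)$. So, taking the nets $(a(k)_\lambda),(d(k)_\lambda)$ of Lemma \ref{lem1}, writing $x(k)_\lambda=a(k)_\lambda^*+d(k)_\lambda+a(k)_\lambda\in M_+$ and $v(k)_\lambda=[\I+d(k)_\lambda+2a(k)_\lambda]^{-1}$, the element $Z:=\I+d(k)_\lambda+2a(k)_\lambda$ satisfies $Z+Z^*=2(\I+x(k)_\lambda)\geq 2\I$. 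Since $v(k)_\lambda$ is the contractive inverse of $Z$, the identity $v+v^*=v^*(Z+Z^*)v$ (valid for any invertible $Z$ with inverse $v$) yields $v(k)_\lambda^*(\I+x(k)_\lambda)v(k)_\lambda=\tfrac12(v(k)_\lambda+v(k)_\lambda^*)\leq \I$, i.e.\ $(\I+x(k)_\lambda)^{1/2}v(k)_\lambda\in{\rm Ball}(M)$. Now $v(k)_\lambda\to w_k$ weak* by Proposition \ref{maintech}, while $x(k)_\lambda\to g_k$ in the $\sigma$-strong* topology by Lemma \ref{lem1}, where we may and do take the net $(x(k)_\lambda)$ norm bounded (by $\|g_k\|$); hence $(\I+x(k)_\lambda)^{1/2}\to (\I+g_k)^{1/2}$ $\sigma$-strongly. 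Multiplying a bounded $\sigma$-strongly convergent self-adjoint net by a bounded weak* convergent net gives weak* convergence of the products, so $(\I+x(k)_\lambda)^{1/2}v(k)_\lambda\to (\I+g_k)^{1/2}w_k$ weak*; since ${\rm Ball}(M)$ is weak* closed, $(\I+g_k)^{1/2}w_k\in{\rm Ball}(M)$, i.e.\ $w_k^*(\I+g_k)w_k\leq \I$.

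Finally I would conclude: let $(w_{k_j})$ be the subsequence of Proposition \ref{maintech} with $w_{k_j}\to w_g$ weak*. Left multiplication by $p_n$ is weak* continuous on $M$, so $p_nw_{k_j}\to p_nw_g$ weak*. For all large $j$ we have $k_j\geq n$, so $\|p_nw_{k_j}\|\leq\sqrt{2/(n(n+1))}$ by the previous paragraphs; as the ball of that radius in $M$ is weak* closed, $\|p_nw_g\|\leq\sqrt{2/(n(n+1))}$, as required.

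The step I expect to require the most care is the passage to the weak* limit in the second paragraph: identifying $\lim_\lambda (\I+x(k)_\lambda)^{1/2}v(k)_\lambda$ as $(\I+g_k)^{1/2}w_k$ relies on the $\sigma$-strong continuity of the square root on bounded sets (hence on arranging that the net $(x(k)_\lambda)$ of Lemma \ref{lem1} is norm bounded, which one checks is possible within the Kaplansky-density construction of Theorem \ref{kap1}), together with the joint continuity of multiplication when one factor converges $\sigma$-strongly (boundedly) and the other converges weak* (boundedly); alternatively this step can be carried out entirely with Hilbert-space vectors in $L^2(M)$, using the weak lower semicontinuity of the norm there.
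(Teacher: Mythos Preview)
Your argument is correct and follows the same overall strategy as the paper: establish the key identity $v^*(\I+x)v=\tfrac12(v+v^*)\leq \I$ from Lemma~\ref{lem2}, push it through the limit $v(k)_\lambda\to w_k$ to obtain $w_k^*(\I+g_k)w_k\leq \I$, combine with $\tfrac{n(n+1)}{2}p_n\leq g_k$ for $k\geq n$, and finally pass to the weak* limit $w_k\to w_g$.

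The one genuine difference is in how the limit $v(k)_\lambda\to w_k$ is handled. The paper does not assume the net $(x(k)_\lambda)$ is norm bounded; instead it works entirely with vectors $h^{1/2}a\in L^2(M)$, proves $\|(g_k+\I)^{1/2}w_k h^{1/2}a\|_2\leq\|h^{1/2}a\|_2$ via the auxiliary inequality $(a^*+d+a+\I)vv^*(a^*+d+a+\I)\leq (a^*+d+a+\I)$ and weak $L^2$-limits, and then concludes by density of $\{h^{1/2}a:a\in M\}$ in $L^2(M)$. Your route, working directly in $M$ with $(\I+x(k)_\lambda)^{1/2}v(k)_\lambda\to(\I+g_k)^{1/2}w_k$ weak*, is cleaner but needs the extra input that the positive approximants $x(k)_\lambda$ can be chosen norm bounded; this is not part of Lemma~\ref{lem1} as stated, though (as you indicate) it can be extracted from the proof of Lemma~\ref{kos} together with a diagonal argument over $\epsilon\downarrow 0$. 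Your alternative suggestion of carrying out that step in $L^2(M)$ is precisely what the paper does.
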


\begin{proof} Let $g_k$ and $w_k$ be as in Proposition \ref{maintech}; 
we had there a weak* convergent 
subnet of the latter sequence with limit $w_g$.   As usual one may replace $(w_k)$  by the 
subnet.  For ease of notation, we will assume that $(w_k)$ is weak* convergent to $w_g$. 
It then suffices to show that $\|p_nw_k\|\leq\sqrt{\frac{2}{n(n+1)}}$ for every $k\geq n$. To see this recall that the closed ball of radius $\sqrt{\frac{2}{n(n+1)}}$ is weak* closed. So if each $p_nw_k$ ($k\geq n$) is in this ball, so is $p_nw_g$.

Observe that for $a$, $d$ and $v$ as in Lemma \ref{lem2}, we have 
\begin{eqnarray*}v^*(a+d+a^*+\I)v&=&\frac{1}{2}v^*[((a^*+d+a+\I)+iH(a^*+d+a))\\
&& \qquad\qquad+ ((a^*+d+a+\I)-iH(a^*+d+a))]v\\
&=&\frac{1}{2}[v+v^*].
\end{eqnarray*}
Since $v$ and $v^*$ are both contractive, this means that $v^*(a+d+a^*+\I)v\leq \I$. This in turn ensures that 
\begin{eqnarray*}
&&(a+d+a^*+\I)vv^*(a+d+a^*+\I)\\
&&=(v^{-1})^*v^*(a+d+a^*+\I)vv^*(a+d+a^*+\I)vv^{-1}\\
&&\leq (v^{-1})^*v^*(a+d+a^*+\I)vv^{-1}\\
&&=(a+d+a^*+\I).
\end{eqnarray*} 
Hence 
\begin{eqnarray}\label{ineq}
&& \|v_\lambda^*(a_\lambda^*+d_\lambda+a_\lambda+\I)w_kh^{1/2}a\|^2\\
&&=\langle w_k^* (a_\lambda^*+d_\lambda+a_\lambda +\I)v_\lambda v_\lambda^*(a_\lambda^*+d_\lambda+a)_\lambda+\I)w_kh^{1/2}a\, ,\, h^{1/2}a\rangle\nonumber\\
&&\leq\langle w_k^*(a_\lambda^*+d_\lambda+a_\lambda+\I)w_kh^{1/2}a\, ,\, h^{1/2}a\rangle\nonumber\\
&&=\langle (a_\lambda^*+d_\lambda+a_\lambda+\I)w_kh^{1/2}a\, ,\, w_kh^{1/2}a\rangle\nonumber
\end{eqnarray}

Now let $a\in M$ be given, and let the nets $(a(k)_\lambda)\subset A_0$, $(d(k)_\lambda)\subset D_+$ be as in Lemma \ref{lem1}. Then the nets $(a(k)_\lambda^*+d(k)_\lambda+a(k)_\lambda+\I)w_kh^{1/2}a$ converge to $(g_k+\I)w_kh^{1/2}a$ in $L^2$-norm. As we saw in the proof of Proposition \ref{maintech}, on passing to a subnet if necessary, we may assume that the nets $(v(k)_\lambda)$'s described by Lemma \ref{lem2}, are weak* convergent to the $w_k$'s. 

Since the $v(k)_\lambda$'s are contractive, we have that $$\|[v(k)_\lambda^*(a(k)_\lambda^*+d(k)_\lambda+a(k)_\lambda+\I)w_kh^{1/2}a]-[v(k)_\lambda^*(g_k+\I)w_kh^{1/2}a]\|$$ $$\leq \|[(a(k)_\lambda^*+d(k)_\lambda+a(k)_\lambda+\I)w_kh^{1/2}a]-[(g_k+\I)w_kh^{1/2}a]\| \to 0.$$ 
Thus
$$[v(k)_\lambda^*(a(k)_\lambda^*+d(k)_\lambda+a(k)_\lambda+\I)w_kh^{1/2}a]-[v(k)_\lambda^*(g_k+\I)w_kh^{1/2}a] \to 0$$
in norm.  Since also $v(k)_\lambda^*(g_k+\I)w_kh^{1/2}a$ is weakly convergent in $L^2(M)$ to $w_k^*(g_k+\I)w_kh^{1/2}a$, 
it follows that $v(k)_\lambda^*(a(k)_\lambda^*+d(k)_\lambda+a(k)_\lambda+\I)w_kh^{1/2}a$ is weakly convergent to $w_k^*(g_k+\I)w_kh^{1/2}a$.
 
We proceed to show that $\|(g_k+\I)^{1/2}w_kh^{1/2}a\|_2\leq\|h^{1/2}a\|_2$. To see this we firstly observe that
$$\langle v(k)_\lambda^*(a(k)_\lambda^*+d(k)_\lambda+a(k)_\lambda+\I)w_kh^{1/2}a, h^{1/2}a\rangle$$ $$\to \langle w_k^*(g_k+\I)w_kh^{1/2}a,h^{1/2}a\rangle =\|(g_k+\I)^{1/2}w_kh^{1/2}a\|^2,$$and that 
$$\langle (a(k)_\lambda^*+d(k)_\lambda+a(k)_\lambda+\I)w_kh^{1/2}a\, ,\, w_kh^{1/2}a\rangle$$ $$\to \langle (g_k+\I)w_kh^{1/2}a,w_kh^{1/2}a\rangle= \|(g_k+\I)^{1/2}w_kh^{1/2}a\|^2.$$ 
Next observe that by inequality  
(\ref{ineq}), we have that 
\begin{eqnarray*}
&& \|v(k)_\lambda^*(a(k)_\lambda^*+d(k)_\lambda+a(k)_\lambda+\I)w_kh^{1/2}a\|^2\\
&&=\langle (a(k)_\lambda^*+d(k)_\lambda+a(k)_\lambda+\I)w_kh^{1/2}a\, ,\, w_kh^{1/2}a\rangle
\end{eqnarray*}
It follows from the above inequality that 
\begin{eqnarray*}
&&\langle v(k)_\lambda^*(a(k)_\lambda^*+d(k)_\lambda+a(k)_\lambda+\I)w_kh^{1/2}a\, ,\, h^{1/2}a\rangle\\
&&\leq \|v(k)_\lambda^*(a(k)_\lambda^*+d(k)_\lambda+a(k)_\lambda+\I)w_kh^{1/2}a\|.\|h^{1/2}a\|\\
&&\leq [\langle (a(k)_\lambda^*+d(k)_\lambda+a(k)_\lambda+\I)w_kh^{1/2}a\, ,\, w_kh^{1/2}a\rangle]^{1/2}.\|h^{1/2}a\| .
 \end{eqnarray*}
On taking limits, we have $\|(g_k+\I)^{1/2}w_kh^{1/2}a\|^2\leq \|(g_k+\I)^{1/2}w_kh^{1/2}a\|.\|h^{1/2}a\|,$ or equivalently, $\|(g_k+\I)^{1/2}w_kh^{1/2}a\|\leq \|h^{1/2}a\|$ as claimed.

Finally note that since the $p_n$'s are decreasing, we as before have that $$\frac{(n+1)n}{2}p_n =\sum_{m=1}^n mp_n \leq \sum_{m=1}^n mp_m,$$ which is dominated by $\sum_{m=1}^{k}mp_m=g_m$. Hence 

\begin{eqnarray*}
\|p_nw_k h^{1/2}a\|^2 &=& \langle w_k^* p_n w_k (h^{\frac{1}{2}} a) , (h^{\frac{1}{2}} a) \rangle\\
&\leq& \frac{2}{n(n+1)} \langle w_k^* g_k w_k (h^{\frac{1}{2}}a) , (h^{\frac{1}{2}}  a) \rangle\\
&\leq& \frac{2}{n(n+1)} \langle w_k^* (g_k+\I) w_k (h^{\frac{1}{2}} a) , (h^{\frac{1}{2}}  a) \rangle\\
&=& \frac{2}{n(n+1)} \|(g_k+\I)^{1/2}w_kh^{1/2}a\|^2 \\
&\leq& \frac{2}{n(n+1)} \|h^{1/2}a\|^2.
\end{eqnarray*}
Since the subspace $\{h^{1/2}a: a\in M\}$ is dense in $L^2(M)$, it follows that the operator of left multiplication by $p_nw_k$ on $L^2(M)$, has norm dominated by $\sqrt{\frac{2}{n(n+1)}}$. This proves the claim.
\end{proof}

Thus with $b = 1 - w_g$  we deduce that
$\Vert p_k - p_k b \Vert \leq \sqrt{\frac{2}{k(k+1)}}$
as needed for the argument in \cite{U} to proceed.  Indeed
the rest of that argument is identical. 
 We obtain $p_0 = p_0 b = b p_0$ where 
$p_0$ is the strong limit of
$(p_n)$ in $M^{**}$, and if $a = (1 + b)/2$  then $(a^n)$ converges weak* to a peak projection
$p \geq p_0$ with $|\varphi|(p) = |\varphi|(p_0) = |\varphi|(1)$.  If $\Vert a \xi \Vert_2 = \Vert  \xi \Vert_2$ for  
$\xi \in L^2(M)$, then as in \cite{U} we obtain $b \xi = \xi$, so that in our  notation above 
we have $\xi \in {\rm Ker}(w_g) = 0$.  However Ker$(w_g) = (0)$.   
Indeed the projection associated with the kernel is in $M$; and  if $e \in M$ is a projection with 
$w_g e = 0$ 
then by the last equality in Proposition \ref{maintech}
we obtain $h^{1/2} e = 0$, so that $e= 0$.    Hence as in \cite{U} (which relies 
here on 
the noncommutative peak  theory \cite{Hay}, see also e.g.\
\cite{BHN,BN}) we obtain $a^n \to 0$ weak*
in $M$.  This completes the proof
of the generalization of Ueda's peak set theorem to $\sigma$-finite algebras.

\section{Consequences of Ueda's peak set theorem for $\sigma$-finite $M$}  \label{cons}

All the other consequences from \cite{U} of Ueda's peak set theorem, now go through with unaltered proofs for maximal subdiagonal subalgebras $A$ of a $\sigma$-finite von Neumann algebra $M$.     Indeed this is true rather generally.
If $A$ is a weak* closed subalgebra of a von Neumann algebra $M$ then we say that
$A$ is an {\em Ueda algebra} in $M$
 if Ueda's peak set theorem holds for $A$;
that is if for every singular state on $M$
there is a peak projection $q$ for $A$ with $\varphi(q) = 1$ and
$q$ is dominated by the `singular part' projection of $M^{**}$, as
in the restatement after Theorem \ref{uadasig}.   
The ideas in \cite[Lemma 9.1]{BW} give the following restatement:

\begin{corollary} \label{restU}   Suppose that $A$ is a weak* closed
subalgebra  of a von Neumann algebra $M$.
Then $A$ is an Ueda algebra in $M$  if and only if 
for every  singular state $\varphi$ on $M$, there exists a
(increasing, if desired) sequence $(q_n)$ of projections
in ${\rm Ker}(\varphi)$ with supremum $1$ in $M$,
and supremum in $M^{**}$ lying in $A^{\perp \perp}$.
If $(q_n)$ is increasing then the last condition means that
 $\psi(q_n) \to 0$ for any $\psi \in A^{\perp}$.
\end{corollary}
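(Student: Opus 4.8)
The plan is to translate the definition of an Ueda algebra into a statement about sequences of projections in $M$ using the peak-projection characterization of Theorem \ref{psvn}, and then pass to orthocomplements. I shall use the following standard facts about the $W^*$-algebra $M^{**}$: it carries a central projection $z$ with $zM^{**}\cong M$ normally, the `singular part' projection being $1-z$ (so a projection in $M^{**}$ lies in the singular part precisely when $z$ annihilates it); a state $\varphi$ on $M$, viewed as an element of $(M^{**})_*=M^*$, is weak* continuous on $M^{**}$; left multiplication by $z$ is weak* continuous on $M^{**}$; and $A^{\perp\perp}$ consists exactly of those $x\in M^{**}$ with $\psi(x)=0$ for all $\psi\in A^{\perp}$. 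From these I record two observations about a decreasing sequence $(r_n)$ of projections in $M$ with infimum $r=\wedge_n r_n$ computed in $M^{**}$: (i) $zr$ equals $\wedge_n r_n$ computed inside $M$ — apply weak* continuity of $L_z$ together with the fact that $zM^{**}\cong M$ is a weak* homeomorphism — so that $r$ is dominated by $1-z$ iff $\wedge_n r_n=0$ in $M$ iff $\vee_n(1-r_n)=1$ in $M$; and (ii) $\varphi(r)=\lim_n\varphi(r_n)$, so that $\varphi(r)=1$ iff $\varphi(r_n)=1$ for all $n$. Throughout I assume $1\in A$ (as holds in this section), so that $1\in A^{\perp\perp}$.

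For the forward implication, let $\varphi$ be a singular state on $M$. By hypothesis there is a peak projection $q$ for $A$ with $\varphi(q)=1$ and $zq=0$. Theorem \ref{psvn} gives $q\in A^{\perp\perp}$ and $q=\wedge_n\tilde q_n$ for a decreasing sequence $(\tilde q_n)$ of projections in $M$; put $q_n=1-\tilde q_n$, an increasing sequence in $M$. From $q\le\tilde q_n\le 1$ and $\varphi(q)=1$ we get $\varphi(\tilde q_n)=1$, i.e. $q_n\in{\rm Ker}(\varphi)$; observation (i) with $zq=0$ gives $\vee_n q_n=1$ in $M$; and the supremum of $(q_n)$ computed in $M^{**}$ equals $1-\wedge_n\tilde q_n=1-q\in A^{\perp\perp}$. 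Conversely, given an increasing sequence $(q_n)$ as described, set $p_n=1-q_n$, a decreasing sequence of projections in $M$ whose infimum in $M^{**}$ is $p:=1-\vee_n q_n\in A^{\perp\perp}$; by Theorem \ref{psvn} $p$ is a peak projection for $A$. Observation (ii) yields $\varphi(p)=\lim_n(1-\varphi(q_n))=1$, and observation (i) identifies $zp$ with $\wedge_n p_n$ computed in $M$, namely $1-\vee_n q_n=0$ there. Thus $p$ is a peak projection for $A$ with $\varphi(p)=1$ that is dominated by the singular part projection, which is exactly what is needed for $A$ to be an Ueda algebra.

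Finally I address the last sentence of the statement and the clause ``increasing, if desired''. For increasing $(q_n)$, $\vee_n q_n$ in $M^{**}$ is the weak* limit of $(q_n)$, so it lies in $A^{\perp\perp}$ iff $\psi(\vee_n q_n)=0$ — equivalently $\psi(q_n)\to 0$ — for every $\psi\in A^{\perp}$. A general sequence $(q_n)$ as in the statement may be replaced by the increasing sequence of partial joins $q_1\vee\cdots\vee q_n$ without altering the suprema in $M$ or in $M^{**}$; these joins still lie in ${\rm Ker}(\varphi)$ since $q_1\vee\cdots\vee q_n=s(q_1+\cdots+q_n)$ and, for $y\ge 0$ with $\varphi(y)=0$, every functional-calculus element $f(y)$ with $0\le f(t)\le t$ has $\varphi(f(y))=0$, while $s(y)$ is an increasing weak* limit of such elements. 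The step I expect to be the main obstacle is observation (i): correctly placing $\wedge_n\tilde q_n$ relative to $z$ and identifying domination by the singular part projection with the condition $\vee_n(1-\tilde q_n)=1$ in $M$ — in other words, scrupulously tracking whether each infimum or supremum is computed in $M$ or in $M^{**}$. The remaining steps are routine once Theorem \ref{psvn} is in hand.
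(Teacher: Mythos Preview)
Your proof is correct and follows essentially the same route as the paper's: invoke Theorem~\ref{psvn} to rewrite the peak-projection condition as an infimum in $M^{**}$ of a decreasing sequence of projections in $M$ lying in $A^{\perp\perp}$, identify domination by the singular part with vanishing of the $M$-infimum and $\varphi(q)=1$ with $\varphi$ being $1$ on each term, then pass to orthocomplements. The paper compresses your observations (i) and (ii) into a citation of \cite[Lemma 9.1]{BW}, and leaves implicit the unitality assumption and the argument that partial joins remain in $\mathrm{Ker}(\varphi)$, but the skeleton is identical.
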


\begin{proof}  By Theorem \ref{psvn}, the information about $q$ 
in the lines above the present corollary is equivalent to:
there is a (decreasing, if desired) sequence $(q_n)$ of
projections in $M$ with infimum $q$ in  $M^{**}$ lying in $A^{\perp \perp}$
satisfying $\varphi(q) = 1$, and $\psi(q) = 0$ for all
normal states $\psi$ of $M$.   As in \cite[Lemma 9.1]{BW}
the last condition
is equivalent to the infimum in $M$ of $(q_n)$ being $0$,
and  $\varphi(q) = 1$
if and only if $\varphi(q_n) = 0$ for all $n$.
Finally set $p = q^\perp$ and  replace $q_n$ by $q_n^\perp$.
\end{proof}

We remark that if $A$ is an Ueda algebra then it is easy to see that so
is  $A^* = \{ x^* : x \in A \}$.

\begin{corollary} \label{co1}     Suppose that a weak* closed
subalgebra $A$ of a von Neumann algebra $M$ is
an Ueda algebra.   If $\varphi \in M^*$ has  nonzero singular part $\varphi_s$, then  there exists a contraction $a \in A$ and a projection 
$p \in M^{**}$ with  $a^n \to p$ weak* in $M^{**}$,  $a^n \to 0$ weak* in $M$,
 and $\varphi_s = \varphi \cdot p$.
\end{corollary}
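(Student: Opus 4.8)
The plan is to reduce the statement to Theorem \ref{uadasig} (or rather its restatement) applied to the singular part $\varphi_s$, and then bootstrap the conclusion about $\varphi_s = \varphi \cdot p$ from the corresponding conclusion about $|\varphi_s|$. First I would recall that for $\varphi \in M^*$ there is a Lebesgue-type decomposition $\varphi = \varphi_n + \varphi_s$ into normal and singular parts (see \cite{Tak}), and that singularity of $\varphi_s$ is equivalent to singularity of $|\varphi_s|$; moreover by polar decomposition we may write $\varphi_s = u \cdot |\varphi_s|$ for a suitable partial isometry $u \in M^{**}$ (equivalently $\varphi_s(\cdot) = |\varphi_s|(u\, \cdot)$), with the support projection $s(|\varphi_s|)$ equal to $u^*u$, computed in $M^{**}$. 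Since $|\varphi_s| \ne 0$ is a singular positive functional, Theorem \ref{uadasig} applied to $|\varphi_s|$ (after normalizing to a state) produces a contraction $a \in A$ and a projection $p \in M^{**}$ with $a^n \to p$ weak* in $M^{**}$, $a^n \to 0$ weak* in $M$ (so $\psi(p) = 0$ for all $\psi \in M_*$), and $|\varphi_s|(p) = |\varphi_s|(1)$.

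The heart of the matter is then to upgrade $|\varphi_s|(p) = |\varphi_s|(1)$ to the two identities $\varphi_s = \varphi \cdot p$ and $\varphi_s = \varphi_s \cdot p$. For the first of these I would argue as follows. Since $|\varphi_s|(p^\perp) = 0$ and $|\varphi_s|$ is a positive normal functional on the $W^*$-algebra $M^{**}$, the projection $p$ dominates the support projection $s(|\varphi_s|)$; hence $s(|\varphi_s|) = p\, s(|\varphi_s|) = s(|\varphi_s|)\, p$. From $\varphi_s = u \cdot |\varphi_s|$ and $u = u\, s(|\varphi_s|)$ we get, for all $x \in M$,
\[
(\varphi \cdot p)(x) = \varphi(px) = \varphi_n(px) + \varphi_s(px) = \varphi_n(px) + |\varphi_s|(upx).
\]
Using $p\, s(|\varphi_s|) = s(|\varphi_s|)$ and $u = u\, s(|\varphi_s|)$ one checks $upx$ and $ux$ agree when tested against $|\varphi_s|$, so $|\varphi_s|(upx) = |\varphi_s|(ux) = \varphi_s(x)$; and since $p$ annihilates all normal functionals on $M$, in particular $\varphi_n(p\, \cdot) = 0$, so $\varphi_n(px) = 0$. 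Hence $(\varphi \cdot p)(x) = \varphi_s(x)$ for all $x \in M$, which is exactly $\varphi_s = \varphi \cdot p$. (The identity $\varphi_s = \varphi_s \cdot p$ follows the same way, or simply by noting $\varphi_s \cdot p = \varphi \cdot p - \varphi_n \cdot p = \varphi \cdot p$ since $\varphi_n \cdot p = 0$.)

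The main obstacle I anticipate is the careful bookkeeping with the two algebras of functionals involved: $\varphi$ and $\varphi_n$ live naturally on $M$, whereas $p$, $u$, $|\varphi_s|$ and the support projections must be interpreted in $M^{**}$ (and $|\varphi_s|$ as a genuinely \emph{normal} functional there), so one has to be scrupulous that $\varphi_n$, viewed in $M^{**}$, still kills the `singular part' of the bidual and in particular kills $p\, \cdot$. This is standard (it is the content of the second restatement after Theorem \ref{uadasig}, identifying the condition $a^n \to 0$ weak* in $M$ with $p$ lying below the central `singular part' projection of $M^{**}$), but it is the step where one most easily slips. Everything else — polar and Lebesgue decompositions, domination of support projections by $p$, and the weak* convergence $a^n \to p$ — is quoted directly from \cite{Tak}, \cite{U}, and Theorem \ref{uadasig}, so the proof is short once this identification is in place.
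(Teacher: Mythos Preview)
Your argument is correct and is essentially the approach the paper has in mind: the paper gives no proof here, simply stating that the consequences from \cite{U} ``go through with unaltered proofs,'' and Ueda's proof of his Corollary~2 is exactly the support--projection/polar--decomposition computation you wrote out (obtain $|\varphi_s|(1-p)=0$, hence $s(|\varphi_s|)\le p$, hence $\varphi_s\cdot p=\varphi_s$, and combine with $\varphi_n\cdot p=0$). One small remark: since Corollary~\ref{co1} is stated for an arbitrary Ueda algebra, you should invoke the defining property of an Ueda algebra rather than Theorem~\ref{uadasig} to produce $a$ and $p$; and you may want to double-check the side on which the partial isometry sits in your polar decomposition (the identity $u=u\,s(|\varphi_s|)$ you use requires $u^{*}u=s(|\varphi_s|)$, which is the Takesaki convention for $\varphi_s=v|\varphi_s|$ with $\varphi_s(x)=|\varphi_s|(xv)$, not $|\varphi_s|(ux)$), but either convention yields $\varphi_s\cdot p=\varphi_s$ once $s(|\varphi_s|)\le p$.
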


\begin{theorem} \label{co3}    Suppose that 
a weak* closed
subalgebra $A$ of a von Neumann algebra $M$ is
an Ueda algebra. Write $A^*_s$ and $A^*_n$ for the set of restrictions to $A$ of singular and normal functionals on $M$.   Each
$\varphi \in A^*$ has a unique Lebesgue decomposition relative to $M$: $\varphi = \varphi_n + \varphi_s$
with $\varphi_n \in A^*_n$ and $\varphi_s \in A^*_s$.   Moreover, $\Vert \varphi \Vert 
= \Vert \varphi_n \Vert  + \Vert  \varphi_s \Vert$.   
\end{theorem}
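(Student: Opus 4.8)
The plan is to follow Ueda's proof of the tracial analogue in \cite{U}, which as noted at the start of this section transfers verbatim once Corollary \ref{co1} is available; the argument has three parts: existence, uniqueness, and the norm identity.

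First I would establish \emph{existence}. Given $\varphi \in A^*$, take a Hahn--Banach extension $\tilde{\varphi} \in M^*$ with $\Vert \tilde{\varphi} \Vert = \Vert \varphi \Vert$, apply Takesaki's normal/singular decomposition \cite{Tak} to write $\tilde{\varphi} = \tilde{\varphi}_n + \tilde{\varphi}_s$ with $\tilde{\varphi}_n \in M_*$, $\tilde{\varphi}_s$ singular, and $\Vert \tilde{\varphi} \Vert = \Vert \tilde{\varphi}_n \Vert + \Vert \tilde{\varphi}_s \Vert$, and then restrict to $A$ to obtain $\varphi = \varphi_n + \varphi_s$ with $\varphi_n := \tilde{\varphi}_n|_A \in A^*_n$ and $\varphi_s := \tilde{\varphi}_s|_A \in A^*_s$.

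The heart of the matter is \emph{uniqueness}. Since $A^*_n$ and $A^*_s$ are visibly linear subspaces of $A^*$, it suffices to prove $A^*_n \cap A^*_s = \{0\}$, and I would deduce this from the claim that $A^\perp \subseteq M^*$ is invariant under $\sigma \mapsto \sigma_s$ (hence also under $\sigma \mapsto \sigma_n = \sigma - \sigma_s$). To prove the claim, let $\sigma \in A^\perp$ with $\sigma_s \neq 0$, and apply Corollary \ref{co1} to $\sigma$ to obtain a contraction $a \in A$ and a projection $p \in M^{**}$ with $a^n \to p$ weak* in $M^{**}$ and $\sigma_s = \sigma \cdot p$. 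Then for each $b \in A$, using that multiplication in the von Neumann algebra $M^{**}$ is separately weak* continuous and that $\sigma$ is a normal (hence weak* continuous) functional on $M^{**}$, the functional $\sigma \cdot p$ evaluated at $b$ is a limit of $\sigma$ applied to the elements $b a^n$ (or $a^n b$, in the opposite module convention), all of which lie in the subalgebra $A$ and so are annihilated by $\sigma$; thus $\sigma_s(b) = 0$ for all $b \in A$, i.e. $\sigma_s \in A^\perp$. Granting this, if $\eta \in A^*_n \cap A^*_s$, choose $\psi \in M_*$ and a singular $\rho \in M^*$ with $\psi|_A = \eta = \rho|_A$; then $\sigma := \psi - \rho \in A^\perp$, and by uniqueness of the decomposition in $M^*$ its singular part is $-\rho$, so $\rho = -\sigma_s \in A^\perp$ and $\eta = \rho|_A = 0$. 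This yields uniqueness of $\varphi = \varphi_n + \varphi_s$.

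Finally, for the \emph{norm identity}: the triangle inequality gives $\Vert \varphi \Vert \leq \Vert \varphi_n \Vert + \Vert \varphi_s \Vert$, while the uniqueness just proved shows that the Takesaki parts of any norm-preserving extension $\tilde{\varphi}$ of $\varphi$ restrict on $A$ to exactly $\varphi_n$ and $\varphi_s$, so $\Vert \varphi_n \Vert + \Vert \varphi_s \Vert \leq \Vert \tilde{\varphi}_n \Vert + \Vert \tilde{\varphi}_s \Vert = \Vert \tilde{\varphi} \Vert = \Vert \varphi \Vert$, forcing equality throughout. I expect the uniqueness step to be the only real obstacle, and its entire content is the invocation of Corollary \ref{co1} (the $\sigma$-finite Ueda peak set theorem, Theorem \ref{uadasig}) to realize the peak projection $p$ as a weak* limit of powers $a^n$ with $a \in A$, so that the algebra structure of $A$ forces $\sigma \cdot p$ to annihilate $A$ whenever $\sigma$ does; the remaining ingredients --- a Hahn--Banach extension, the $\ell^1$-splitting of the normal/singular decomposition of $M^*$, and the triangle inequality --- are routine.
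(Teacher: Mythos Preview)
Your proof is correct and follows exactly the route the paper intends: as stated at the start of Section~\ref{cons}, the argument is Ueda's original one from \cite{U}, transplanted verbatim once Corollary~\ref{co1} is in hand, and that is precisely what you have written out. The key step---showing that $\sigma \in A^\perp$ implies $\sigma_s = \sigma \cdot p \in A^\perp$ by approximating $p$ via $a^n \in A$---is the F~\&~M Riesz mechanism (your uniqueness argument in effect proves Theorem~\ref{co5} along the way), and the existence and norm-identity parts are the routine Hahn--Banach and Takesaki-decomposition manipulations you describe.
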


\begin{corollary} \label{co4}   Suppose that 
a weak* closed
subalgebra $A$ of a von Neumann algebra $M$ is
an Ueda algebra. Then the predual $A_*$ of $A$  
is unique, and is
an $L$-summand in $A^*$. 
  Also, $A_*$ has property {\rm  (V$^*$)} and is weakly sequentially complete. 
\end{corollary}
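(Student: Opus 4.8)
The plan is to derive Corollary \ref{co4} from Theorem \ref{co3} by a sequence of standard Banach space arguments, treating each asserted property in turn. The key observation is that Theorem \ref{co3} says precisely that $A^*$ decomposes as a direct sum $A^*_n \oplus A^*_s$ which is isometric in the sense that $\Vert \varphi \Vert = \Vert \varphi_n \Vert + \Vert \varphi_s \Vert$; this is exactly the statement that $A^*_n$ is an $L$-summand in $A^*$ (and $A^*_s$ is its $L$-complement). So the first step is to identify $A_*$ with $A^*_n$. Indeed every normal functional on $M$ restricts to a weak* continuous functional on $A$, giving a contractive map $M_* \to A^*$ whose range is $A^*_n$; and since $\mathrm{Ball}(A)$ is weak* dense in its bidual, this map is a quotient map onto a predual of $A$, so $A^*_n$ is \emph{a} predual of $A$ sitting inside $A^*$ as an $L$-summand.

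Next I would address uniqueness of the predual. The standard tool here is the theorem (due to Godefroy, or Godefroy--Talagrand) that if a Banach space $X$ admits a predual which is an $L$-summand in $X^*$ — equivalently, if $X$ is an $L$-embedded dual, or if the canonical projection is an $L$-projection — then the predual is unique. More precisely, a Banach space that is the range of an $L$-projection on its bidual has a unique isometric predual; and one knows $A^*_n$ is precisely the set of functionals on $A^*$ that are weak* continuous for the relevant predual topology, so it is intrinsically determined. Alternatively one can quote directly that an $L$-summand predual forces uniqueness via Godefroy's work on unique preduals. The point is that having exhibited $A_* = A^*_n$ as an $L$-summand, uniqueness is automatic from this general principle, so this step is a citation rather than an argument.

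For the remaining two properties — property (V$^*$) and weak sequential completeness — the strategy is again to inherit them from $M_*$ along the quotient map $M_* \twoheadrightarrow A^*_n = A_*$, using the $L$-summand structure to control the kernel. The predual $M_*$ of a von Neumann algebra is weakly sequentially complete and has property (V$^*$) (these are classical facts about preduals of von Neumann algebras / non-commutative $L^1$). Property (V$^*$) passes to quotients, and an $L$-summand of a space with property (V$^*$) again has it; likewise weak sequential completeness passes to $L$-summands of spaces that have it, since in an $L$-decomposition $X = Y \oplus_1 Z$ a weakly Cauchy sequence splits into weakly Cauchy sequences in $Y$ and $Z$. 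So one writes $M_* = A_* \oplus_1 (\text{kernel})$ — or rather uses that $A_* = A^*_n$ is complemented in $M^*$ and is itself an $L$-summand in $A^*$ — and transfers both properties. This follows the reasoning in \cite{U} essentially verbatim.

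The main obstacle, such as it is, is bookkeeping rather than mathematics: one must be careful that the Lebesgue decomposition of Theorem \ref{co3} is genuinely an $\ell^1$-direct sum decomposition of $A^*$ (which the norm identity $\Vert\varphi\Vert = \Vert\varphi_n\Vert + \Vert\varphi_s\Vert$ together with uniqueness delivers), and that $A^*_n$ is weak* closed in $A^*$ for the duality with $A$ — so that it really is the predual and not merely a norm-closed $L$-summand. Once the $L$-projection $\varphi \mapsto \varphi_n$ is in hand, every assertion of the corollary is a known permanence property of $L$-embedded spaces or of preduals of von Neumann algebras, so the proof can be kept to a few lines citing \cite{U} and the relevant Banach space literature (Godefroy, Harmand--Werner--Werner).
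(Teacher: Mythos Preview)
Your proposal is correct and matches the paper's approach: the paper gives no explicit proof of this corollary, simply noting at the start of Section~\ref{cons} that once Theorem~\ref{co3} is established the remaining consequences in \cite{U} go through with unaltered proofs, and what you have outlined is precisely Ueda's argument (Lebesgue decomposition gives the $L$-summand, Godefroy/Harmand--Werner--Werner give uniqueness, and $L$-embeddedness yields property (V$^*$) and weak sequential completeness). One small wobble: the line ``$M_* = A_* \oplus_1 (\text{kernel})$'' is not the relevant decomposition---the $L$-summand statement is $A^* = A^*_n \oplus_1 A^*_s$, and the last two properties follow directly from $A_*$ being $L$-embedded in its own bidual $A^*$, rather than from any splitting of $M_*$---but you immediately self-correct, so this is cosmetic.
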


(See also e.g.\ \cite{KY} for recent similar results for a completely different class of dual operator algebras.)

\begin{theorem}[F. $\&$ M. Riesz type theorem] \label{co5}  Suppose that a weak* closed
subalgebra $A$ of a von Neumann algebra $M$ is
an Ueda algebra. 
If $\varphi \in M^*$ annihilates $A$ (that is, $\varphi \in A^\perp$) 
then the normal and singular parts, $\varphi_n$ and $\varphi_s$, also annihilate $A$.
\end{theorem}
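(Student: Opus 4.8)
The plan is to derive the F. \& M. Riesz type statement directly from the preceding Lebesgue decomposition results, namely Corollary \ref{co1} and Theorem \ref{co3}, applied to the fact that $A^*_s$ and $A^*_n$ are complementary subspaces of $A^*$ whose decomposition is unique and isometric. Suppose $\varphi \in A^\perp$, so $\varphi$ annihilates $A$; write $\varphi = \varphi_n + \varphi_s$ for its (global) normal and singular parts as a functional on $M$. The goal is to show each of $\varphi_n, \varphi_s$ lies in $A^\perp$ as well. Since $\varphi_s \in M^*$ is singular, Corollary \ref{co1} (applied with $\varphi$ itself, whose singular part is $\varphi_s$) produces a contraction $a \in A$ and a peak projection $p \in M^{**}$ with $a^n \to p$ weak* in $M^{**}$, $a^n \to 0$ weak* in $M$, and $\varphi_s = \varphi \cdot p$ (that is $\varphi_s(x) = \varphi(xp)$, reading $\varphi$ as extended to $M^{**}$).

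First I would compute $\varphi_s$ on an arbitrary $x \in A$ using this formula. Since $\varphi$ annihilates $A$ and $p = \lim_n a^n$ weak* in $M^{**}$ with each $a^n \in A$, for $x \in A$ we have $x a^n \in A$ (as $A$ is an algebra) and hence $\varphi(x a^n) = 0$ for every $n$; passing to the weak* limit in $M^{**}$ gives $\varphi(xp) = 0$, i.e. $\varphi_s(x) = 0$. Thus $\varphi_s \in A^\perp$. Then $\varphi_n = \varphi - \varphi_s$ is a difference of two elements of $A^\perp$, hence also annihilates $A$, completing the argument.

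There is a small bookkeeping point about which $p$ to use: Corollary \ref{co1} is stated for the singular part $\varphi_s$ of a functional $\varphi$ with nonzero singular part, and if $\varphi_s = 0$ the statement is trivial, so we may assume $\varphi_s \neq 0$. One must make sure the product $\varphi \cdot p$ is interpreted as a functional on $M$ (equivalently on $M^{**}$) via $(\varphi \cdot p)(x) = \varphi(xp)$, and that $xp \in A^{\perp\perp}$ whenever $x \in A$ — this follows since $p \in A^{\perp\perp}$ by Theorem \ref{psvn} and $A^{\perp\perp}$ is a weak* closed subalgebra of $M^{**}$ containing $A$, so $x a^n \in A$ and the weak* limit $xp$ lies in $A^{\perp\perp}$; then $\varphi(xp) = 0$ because $\varphi \in A^\perp = (A^{\perp\perp})_\perp$ annihilates all of $A^{\perp\perp}$.

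The argument is short and I do not expect a genuine obstacle; the only thing to be careful about is the order of multiplication (whether one uses $x a^n$ or $a^n x$) and the fact that $A$ need not be unital or have a contractive approximate identity — but this does not matter here since we only use that $A$ is closed under multiplication and that $\varphi$ kills $A^{\perp\perp}$. An alternative, essentially equivalent route would be to invoke Theorem \ref{co3} directly: the restriction of $\varphi$ to $A$ is $0$, its Lebesgue decomposition relative to $M$ is unique, and $0 = 0_n + 0_s$ is one such decomposition while $\varphi_n|_A + \varphi_s|_A$ is another (with $\varphi_n|_A \in A^*_n$, $\varphi_s|_A \in A^*_s$), so by uniqueness $\varphi_n|_A = \varphi_s|_A = 0$. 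I would likely present this slicker version, with the computation via Corollary \ref{co1} mentioned as the mechanism underlying it, since Theorem \ref{co3}'s uniqueness is exactly the tool designed for this.
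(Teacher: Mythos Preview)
Your proposal is correct and matches the standard argument (the paper itself gives no explicit proof here, simply noting that Ueda's original arguments from \cite{U} carry over unchanged; your approach via Corollary~\ref{co1} is precisely that argument). Your alternative route through the uniqueness in Theorem~\ref{co3} is also valid and, as you say, slicker---since Theorem~\ref{co3} is stated and proved before Theorem~\ref{co5} in both this paper and \cite{U}, there is no circularity. Your remark that the order of multiplication is immaterial (because $p\in A^{\perp\perp}$ and $A^{\perp\perp}$ is a subalgebra, so both $xp$ and $px$ lie in $A^{\perp\perp}$, which $\varphi\in A^\perp$ annihilates) is exactly the right way to dispose of the convention ambiguity in $\varphi\cdot p$.
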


Our proofs from \cite{BL3} then give the following results (suitably modified by an appeal to Theorem \ref{co5}
instead of to the F $\&$ M type theorem in \cite{BL3}), as noted in  \cite{U} and suggested by the referee of that paper.  
One may define an {\em F $\&$ M Riesz algebra}
to be a weak* closed subalgebra $A$ of a von Neumann algebra $M$,
such that if $\varphi \in A^\perp$  
then the normal and singular parts, $\varphi_n$ and $\varphi_s$, also annihilate $A$.   Theorem \ref{co5} then 
says that any Ueda algebra is an F $\&$ M Riesz algebra. 
Again, it is easy to argue (by considering $\psi^*(x) = \overline{\psi(x^*)}$)
that if  $A$  is an  F $\&$ M Riesz algebra then so is  $A^* = \{ x^* : x \in A \}$.
By proofs in  \cite{BL3}  we then have:

\begin{corollary} \label{co6}   Suppose that $A$ is an  F $\&$ M Riesz algebra in a von Neumann algebra
$M$ such that  $A + A^*$ is weak* dense in $M$.     If   $\varphi \in M^*$
annihilates $A + A^*$ then $\varphi$ is singular.   Any normal functional on $M$ is the unique Hahn-Banach extension 
of its restriction to $A+ A^*$, and  in particular is normed by $A+ A^*$.    In addition, any Hahn-Banach extension to $M$ of a weak* continuous functional on $A$, is normal.   
\end{corollary}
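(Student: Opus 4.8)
The plan is to derive all three assertions from the F \& M Riesz property of $A$ (and of $A^*$), together with the uniqueness and the norm-additivity $\Vert \varphi \Vert = \Vert \varphi_n \Vert + \Vert \varphi_s \Vert$ of the normal--singular (Lebesgue) decomposition of a functional on a von Neumann algebra, for which we cite \cite{Tak}.

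For the first assertion, suppose $\varphi \in M^*$ annihilates $A + A^*$; thus $\varphi \in A^\perp$ and $\varphi \in (A^*)^\perp$. Since $A$ is an F \& M Riesz algebra, $\varphi_n \in A^\perp$, and since $A^*$ is also an F \& M Riesz algebra (by the remark just above), likewise $\varphi_n \in (A^*)^\perp$. Hence the normal functional $\varphi_n$ annihilates $A + A^*$, which is weak* dense in $M$; so $\varphi_n = 0$ and $\varphi = \varphi_s$ is singular. For the second assertion, fix $\psi \in M_*$ and let $\widetilde\psi \in M^*$ be any Hahn--Banach extension of $\psi|_{A+A^*}$. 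Then $\widetilde\psi - \psi$ annihilates $A + A^*$, so by what we just proved it is singular; comparing this with its unique Lebesgue decomposition $\widetilde\psi - \psi = (\widetilde\psi_n - \psi) + \widetilde\psi_s$, in which $\widetilde\psi_n - \psi$ is normal and $\widetilde\psi_s$ is singular, forces $\widetilde\psi_n = \psi$. Now norm-additivity gives
$$\Vert \psi|_{A + A^*} \Vert = \Vert \widetilde\psi \Vert = \Vert \widetilde\psi_n \Vert + \Vert \widetilde\psi_s \Vert = \Vert \psi \Vert + \Vert \widetilde\psi_s \Vert \geq \Vert \psi \Vert \geq \Vert \psi|_{A+A^*} \Vert ,$$
whence $\widetilde\psi_s = 0$, i.e.\ $\widetilde\psi = \psi$. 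This simultaneously shows that $\psi$ is normed by $A + A^*$ and that it is the unique Hahn--Banach extension of its restriction there.

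For the last assertion, let $\rho$ be a weak* continuous functional on $A$. Since $A$ is weak* closed in $M$, $\rho$ is the restriction to $A$ of some $\psi \in M_*$; let $\widetilde\rho \in M^*$ be any Hahn--Banach extension of $\rho$ to $M$. Then $\widetilde\rho - \psi \in A^\perp$, so the F \& M Riesz property (Theorem \ref{co5}) gives $\widetilde\rho_n - \psi = (\widetilde\rho - \psi)_n \in A^\perp$ and $\widetilde\rho_s = (\widetilde\rho - \psi)_s \in A^\perp$. The first of these says $\widetilde\rho_n|_A = \rho$, and then
$$\Vert \rho \Vert = \Vert \widetilde\rho \Vert = \Vert \widetilde\rho_n \Vert + \Vert \widetilde\rho_s \Vert \geq \Vert \widetilde\rho_n|_A \Vert + \Vert \widetilde\rho_s \Vert = \Vert \rho \Vert + \Vert \widetilde\rho_s \Vert ,$$
so $\widetilde\rho_s = 0$ and $\widetilde\rho = \widetilde\rho_n$ is normal.

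The computations above are routine; there is no serious obstacle, only the bookkeeping with the Lebesgue decomposition — in particular one must use that subtracting a normal functional shifts only the normal part of a functional, and that in the first step one genuinely needs the F \& M Riesz property for $A^*$ as well as for $A$ (weak* density of $A + A^*$ alone does not suffice). As the text indicates, this is in essence the argument of \cite{BL3}, now appealing to Theorem \ref{co5} in place of the F \& M Riesz theorem of that paper.
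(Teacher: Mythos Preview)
Your proof is correct and follows essentially the approach the paper indicates: the paper does not give an explicit proof here but defers to the arguments of \cite{BL3}, noting only that one should appeal to Theorem~\ref{co5} (the F~\&~M Riesz property for $A$, hence also for $A^*$) in place of the F~\&~M Riesz theorem of that reference. Your three-step argument via the normal--singular decomposition and its norm-additivity is exactly this.
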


\begin{corollary} \label{co7}    If $A$ is  an  F $\&$ M Riesz algebra in a von Neumann algebra
$M$ such that  $A + A^*$ is weak* dense in $M$ then  ${\rm Ball}(A + A^*)$ is weak* dense in ${\rm Ball}(M)$.
\end{corollary}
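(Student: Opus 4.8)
The plan is to deduce this at once from Corollary \ref{co6} by a Hahn-Banach separation argument; no new estimates are required. Write $S = A + A^*$, a (not necessarily closed) linear subspace of $M$, and let $C$ denote the weak* closure of ${\rm Ball}(S)$ in $M$. Since ${\rm Ball}(S)$ is absolutely convex (note $S$ is a complex subspace of $M$), so is $C$, and $C \subseteq {\rm Ball}(M)$; the task is to prove $C = {\rm Ball}(M)$.

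Suppose for contradiction that $x \in {\rm Ball}(M) \setminus C$. Working in $M$ with its weak* topology $\sigma(M, M_*)$, whose continuous dual is $M_*$, the Hahn-Banach separation theorem yields a \emph{normal} functional $\psi \in M_*$ and a real $\alpha$ with $\mathrm{Re}\, \psi(y) \le \alpha < \mathrm{Re}\, \psi(x)$ for all $y \in C$. Since $0 \in C$ we have $\alpha \ge 0$, and $\alpha = 0$ is impossible (it would force $\psi \equiv 0$ on the weak* dense set $S$, hence $\psi = 0$, contradicting $\mathrm{Re}\, \psi(x) > 0$); so after dividing by $\alpha$ we may assume $\mathrm{Re}\, \psi(y) \le 1 < \mathrm{Re}\, \psi(x)$ on $C$. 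As $C$ is balanced this gives $|\psi(y)| \le 1$ for all $y \in C$, in particular for all $y \in {\rm Ball}(S)$, so $\psi|_{A+A^*}$ has norm at most $1$ as a functional on $A + A^*$; on the other hand $|\psi(x)| \ge \mathrm{Re}\, \psi(x) > 1$ while $\|x\| \le 1$, forcing $\|\psi\|_{M^*} > 1$. Thus $\psi$ fails to be normed by $A + A^*$, contradicting Corollary \ref{co6}, whose hypotheses (that $A$ is an F $\&$ M Riesz algebra with $A + A^*$ weak* dense in $M$) are precisely those in force here.

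Equivalently, one may phrase this through the bipolar theorem: $C$ is the pre-polar (in $M$) of the set $\{\psi \in M_* : \psi|_{A+A^*} \text{ has norm} \le 1\}$, and Corollary \ref{co6} identifies this set as exactly ${\rm Ball}(M_*)$, whose pre-polar is ${\rm Ball}(M)$. Either way, the entire content is the norming assertion of Corollary \ref{co6}, which rests on the F $\&$ M Riesz property together with the decomposition of $M^*$ into its normal and singular parts; so I expect there to be no real obstacle here beyond recognizing that the work has already been done upstream. We note in passing that this provides a softer derivation of the special case recorded in Theorem \ref{kap1}, which was instead obtained there directly via Haagerup's reduction theory.
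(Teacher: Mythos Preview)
Your argument is correct and is essentially the approach the paper has in mind: the paper does not spell out a proof of this corollary but points to \cite{BL3}, where the Kaplansky-type density is obtained precisely by the bipolar/separation argument from the norming assertion in Corollary~\ref{co6}. Your closing remark also matches the paper's own comment that ``in this case we obtain all the assertions of Theorem~\ref{kap1} too'' (though of course in the maximal subdiagonal $\sigma$-finite setting this is not logically an alternative route, since Theorem~\ref{kap1} is used upstream in the proof of Ueda's theorem).
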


Moreover in this case we obtain all the 
assertions of Theorem \ref{kap1} too.  

The last assertion of the Corollary \ref{co6}   is related to the well known Gleason-Whitney theorem in function theory.  A special case of the following appears in \cite[Theorem 4.1]{BL3} and \cite[Theorem 3.4]{Lis}.   We can express some of the ideas in those results more
abstractly and generally as follows:

\begin{lemma} \label{gw2}  Suppose $A$ is a weak* closed subalgebra of a von Neumann algebra $M$.  Then $A + A^*$ is weak* dense in $M$ 
if and only if there is at most one normal Hahn-Banach extension to $M$ of any normal weak* continuous functional on $A$.
\end{lemma}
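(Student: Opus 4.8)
The plan is to prove both implications of the stated equivalence, treating the ``easy'' forward direction first and then the substantive converse.

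\medskip

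\emph{First, the forward direction.} Suppose $A + A^*$ is weak* dense in $M$. I would argue that any normal Hahn--Banach extension is forced to agree with a fixed normal functional on a weak* dense subspace, hence is unique. Concretely, let $\psi$ be a normal weak* continuous functional on $A$, and suppose $\Phi_1, \Phi_2 \in M_*$ both extend $\psi$ and both have norm $\Vert \psi \Vert$. On $A$ they agree. I then want to show they agree on $A^*$ as well; once that is done, $\Phi_1 - \Phi_2$ is a normal functional vanishing on the weak* dense subspace $A + A^*$, hence is $0$. To get agreement on $A^*$: for $x \in A$, consider $\overline{\Phi_i(x^*)}$. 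The key observation (this is the standard Gleason--Whitney trick) is that for a functional of minimal norm extending $\psi$, the values on $x^*$ are determined by $\psi$ --- one uses that $\Phi_i$ restricted to the operator system $A + A^*$ (or one passes to the unitization/operator system generated) has a determined real part, via the fact that a norm-preserving extension of a functional on a subspace of a $C^*$-algebra that is ``balanced'' behaves well. More carefully, I would follow \cite[Theorem 4.1]{BL3} and \cite[Theorem 3.4]{Lis}: there the point is that if $\Phi$ is normal and extends $\psi$ with $\Vert \Phi \Vert = \Vert \psi \Vert$, then $\Phi(a^*) = \overline{\Phi(a)^*}$-type relations combined with minimality pin down $\Phi$ on $A^*$. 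I expect this direction to be short modulo citing those two results essentially verbatim.

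\medskip

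\emph{Now, the converse --- the harder direction.} Suppose $A + A^*$ is \emph{not} weak* dense in $M$; I must produce a normal weak* continuous functional on $A$ with two distinct normal Hahn--Banach extensions. By Hahn--Banach (in the weak* topology), since $\overline{A+A^*}^{w*}$ is a proper weak* closed subspace of $M$, there exists a nonzero $\psi_0 \in M_*$ with $\psi_0 \perp (A + A^*)$, i.e.\ $\psi_0$ annihilates both $A$ and $A^*$. Now take any normal Hahn--Banach extension situation: pick any normal weak* continuous functional $\psi$ on $A$ that \emph{has} a normal Hahn--Banach extension $\Phi$ to $M$ (e.g.\ start with $\Phi = $ any nonzero normal functional on $M$ and let $\psi = \Phi|_A$ --- but I need to be careful that $\Vert \Phi \Vert = \Vert \psi \Vert$, so better to choose $\Phi$ so that its norm is attained on $A$; alternatively take $\psi = 0$ on $A$, whose norm-zero extensions include both $0$ and... no, a Hahn--Banach extension of $0$ must have norm $0$). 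The cleanest route: since $\psi_0 \perp A$, both $\Phi$ and $\Phi + t\psi_0$ restrict to $\psi$ on $A$ for every scalar $t$. Both are normal. The issue is the norm constraint in ``Hahn--Banach extension'': I need $\Vert \Phi + t \psi_0 \Vert = \Vert \psi \Vert$ for some $t \neq 0$. This need not hold for all $t$, but I can instead drop down to the subalgebra: consider $B = \overline{A + A^*}^{w*} \subsetneq M$. Then $\psi_0|_B = 0$, and I work with $\psi$ on $A$ having some norm-preserving normal extension to $B$ first, call it $\Phi_B$; then extend $\Phi_B$ to $M$ by Hahn--Banach in two ways --- and here $\psi_0$ witnesses nonuniqueness of the \emph{normal} extension from $B$ to $M$ provided I can arrange normality, which for extending from a weak* closed subspace need not be automatic. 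The real obstacle, then, is ensuring the two competing extensions are both \emph{normal} and both \emph{norm-preserving} simultaneously. I would resolve this by choosing $\psi$ judiciously: let $\psi = \Phi_0|_A$ where $\Phi_0 \in M_*$ is chosen (possible since $M_*$ separates points and $B \neq M$) with $\Vert \Phi_0 \Vert = \Vert \Phi_0|_A \Vert$ and such that $\Phi_0 + \psi_0$ also has norm equal to $\Vert \psi \Vert$; scaling $\psi_0$ to be small relative to the ``slack'' makes this work when the norm of $\psi$ is attained at an interior-type point, and a perturbation/convexity argument handles the general case. Then $\Phi_0$ and $\Phi_0 + \psi_0$ are two distinct normal Hahn--Banach extensions of $\psi$.

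\medskip

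So the key steps in order are: (1) forward direction via the minimal-norm-extension argument of \cite{BL3,Lis}, reducing to showing a normal functional vanishing on $A+A^*$ vanishes; (2) for the converse, invoke Hahn--Banach to get $\psi_0 \in M_* \setminus \{0\}$ with $\psi_0 \perp (A+A^*)$; (3) build a normal functional $\Phi_0$ whose norm is attained on $A$ and for which a small multiple of $\psi_0$ can be added without increasing the norm; (4) conclude $\Phi_0$ and $\Phi_0 + \psi_0$ are distinct normal Hahn--Banach extensions of $\psi_0$'s... of $\Phi_0|_A$. The main obstacle is step (3): arranging that both competitors are genuinely \emph{norm-preserving} extensions, not merely extensions --- this is where one must exploit the geometry of the predual $M_*$ (an $L$-structure / the fact that one can find functionals with ``room to perturb'') rather than a purely soft argument.
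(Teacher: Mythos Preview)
Your plan has genuine gaps in both directions, and you have misjudged which direction is harder.

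For the converse ($\Leftarrow$), which you call the ``harder'' direction, your step~(3) is a real obstacle and you never resolve it. The paper bypasses it with a much simpler idea. Rather than the contrapositive, argue directly: assuming uniqueness, show that any normal $\psi \in M_*$ annihilating $A+A^*$ must vanish. Taking real and imaginary parts reduces to the case $\psi = \psi^*$; now write the Jordan decomposition $\psi = \psi_1 - \psi_2$ with $\psi_1,\psi_2 \in (M_*)_+$. Since $\psi|_A = 0$ we have $\psi_1|_A = \psi_2|_A$. The key point you are missing: because $1 \in A$ and a positive functional attains its norm at $1$, both $\psi_1$ and $\psi_2$ are \emph{automatically} norm-preserving normal extensions of $\psi_1|_A$. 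The uniqueness hypothesis then forces $\psi_1 = \psi_2$, so $\psi = 0$. Positivity delivers the norm control for free; no perturbation, convexity, or ``slack'' argument is needed.

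For the forward direction ($\Rightarrow$), your sketch does not work as written: agreement of $\Phi_1,\Phi_2$ on $A$ does not by itself yield agreement on $A^*$, and citing \cite{BL3,Lis} is not an argument. The paper proceeds differently. Normalize so $\Vert\varphi\Vert = 1$ and use weak* compactness of ${\rm Ball}(A)$ to pick $a \in {\rm Ball}(A)$ with $\varphi(a)=1$. Then $\varphi\, a$ and $\psi\, a$ are contractive normal functionals taking the value $1$ at $1$, hence are states, hence self-adjoint. Thus $(\varphi-\psi)\,a$ is a self-adjoint normal functional vanishing on $A$ (since $aA \subset A$), and therefore, by self-adjointness, on $A^*$ as well; now weak* density of $A+A^*$ kills it on all of $M$. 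A separate argument with $|\varphi|$, $|\psi|$ and the left support projection $e$ of $a$ shows $(\varphi-\psi)\,e^\perp = 0$, and the two pieces combine to give $\varphi = \psi$. Note the logic is inverted relative to your plan: density is used \emph{inside} the argument to kill the self-adjoint piece $(\varphi-\psi)\,a$, not as a final step after first establishing agreement on $A^*$.
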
  

\begin{proof} 
  ($\Rightarrow$)  \  Choose $a \in {\rm Ball}(A)$ such that $\varphi(a) = 1$, and let  $e$ be the left support of $a$, which is the support of $aa^*$.  We may suppose that $\varphi \in M_*$
and that $\psi$ is another normal Hahn-Banach extension
of $\varphi_{|A}$.   We have $$1 = \varphi(a) \leq
|\varphi|(aa^*) \leq |\varphi|(e),$$
so that $|\varphi|(e^\perp) = 0$.  Hence
$|\varphi| e^\perp = 0$ and $\varphi e^\perp = 0$.
Similarly $\psi e^\perp = 0$ and $(\varphi - \psi)e^\perp = 0$.   
Next note that $\varphi a$ is contractive and unital,
so positive.   Similarly for  $\psi$, and so 
$(\varphi - \psi) a$ is a selfadjoint normal functional.  It vanishes on $A$, hence also on $A+A^*$
and on $M$.   From this it is easy to see that 
$(\varphi - \psi) e = 0$.    So $\varphi - \psi = 
(\varphi - \psi) e + (\varphi - \psi) e^\perp =  0$.

($\Leftarrow$)  \ It is enough to show that if normal $\psi$ annihilates $A+A^*$ then $\psi = 0$.
By taking real and imaginary parts we may assume that $\psi = \psi^*$.   Suppose $\psi = \psi_1 - \psi_2$
for positive normal $\psi_k$.   Then $\psi_1 = \psi_2 + \psi$ and $\psi_2$ agree on $A$, and are normal Hahn-Banach extensions since the norm of a positive functional is its value at $1$.   So $\psi_1 = \psi_2$ and $\psi = 0$.
\end{proof}

\begin{corollary}[Gleason-Whitney type theorem]   Suppose that $A$ is an  F $\&$ M Riesz algebra in a von Neumann algebra
$M$.   Then $A+A^*$ is weak* dense in $M$ 
if and only if every normal functional on $A$ has a 
unique  Hahn-Banach extension to $M$,  
and if and only if every normal
functional on $A$ has a
unique normal Hahn-Banach extension to $M$. \end{corollary}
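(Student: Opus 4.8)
The plan is to prove the three-way equivalence by a cyclic argument, leveraging the abstract tools already developed, chiefly Lemma~\ref{gw2} and Corollary~\ref{co6}. Let me denote the three statements: (a) $A + A^*$ is weak* dense in $M$; (b) every normal functional on $A$ has a unique Hahn-Banach extension to $M$; (c) every normal functional on $A$ has a unique normal Hahn-Banach extension to $M$.

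\begin{proof}
Throughout, recall from Theorem~\ref{co5} that since $A$ is an F $\&$ M Riesz algebra, any $\varphi \in A^\perp$ has its normal and singular parts $\varphi_n, \varphi_s$ again in $A^\perp$.

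(a) $\Rightarrow$ (b).  This is essentially the content of Corollary~\ref{co6}: under the hypothesis that $A + A^*$ is weak* dense in $M$, any normal functional on $M$ is the unique Hahn-Banach extension of its restriction to $A + A^*$, and any Hahn-Banach extension to $M$ of a weak* continuous functional on $A$ is normal.  So if $\psi$ is a normal functional on $A$ and $\Psi_1, \Psi_2$ are Hahn-Banach extensions to $M$, then both are automatically normal, and hence both are the unique Hahn-Banach extension to $M$ of $\psi$ viewed as an element of $(A+A^*)^*$ --- we use here that $\psi$ extends to a normal functional on $A + A^*$ and then invoke the uniqueness clause of Corollary~\ref{co6}.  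Thus $\Psi_1 = \Psi_2$.

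(b) $\Rightarrow$ (c).  This is immediate: uniqueness among \emph{all} Hahn-Banach extensions certainly implies uniqueness among the normal ones, provided we know at least one normal extension exists.  Existence of a normal Hahn-Banach extension follows from (a) via Corollary~\ref{co6} --- but we cannot invoke (a) yet in this implication.  Instead, existence of \emph{some} Hahn-Banach extension is automatic by the Hahn-Banach theorem, and to produce a normal one we argue as in the proof of Lemma~\ref{gw2}: decompose the given normal $\psi$ on $A$ into combinations of positive normal functionals (taking real and imaginary parts, then Jordan decompositions), extend each positive piece to a positive normal functional on $M$ of the same norm (e.g.\ extend off $A$ to $M$ via a state extension, which is normal when $\psi$ is normal on $A$ --- here one may use that a positive normal functional on $A$ extends to a positive normal functional on the von Neumann algebra generated by $A$, or argue via $A^{**}$ and the canonical projection onto the normal part), and recombine.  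So a normal Hahn-Banach extension exists, and (b) forces it to be the unique one, giving (c).

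(c) $\Rightarrow$ (a).  This is the direction where the F $\&$ M Riesz hypothesis does real work, and I expect it to be the main obstacle.  By Lemma~\ref{gw2}, it suffices to show that there is at most one \emph{normal} Hahn-Banach extension to $M$ of any normal weak* continuous functional on $A$ --- but that is exactly statement (c).  So (c) $\Rightarrow$ (a) is in fact an immediate consequence of the ($\Leftarrow$) direction of Lemma~\ref{gw2}, with no further appeal to the F $\&$ M Riesz property needed.  Assembling: (a) $\Rightarrow$ (b) $\Rightarrow$ (c) $\Rightarrow$ (a), so all three are equivalent.
\end{proof}

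The one genuine subtlety is the existence of a normal Hahn-Banach extension in the step (b) $\Rightarrow$ (c) --- without it, ``unique normal Hahn-Banach extension'' could be vacuously satisfied in a way that does not match (b).  I would handle this by the Jordan-decomposition-plus-state-extension argument sketched above, which mirrors the ($\Leftarrow$) proof of Lemma~\ref{gw2}; the key input is that a positive normal functional on the weak* closed subalgebra $A$ admits a positive normal extension to $M$ of the same norm, which one gets by viewing things in $A^{**} \subset M^{**}$ and composing with the canonical central projection onto the normal part of $M^{**}$, noting that weak* continuity of the original functional on $A$ means it already lives on the normal part.
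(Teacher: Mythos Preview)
Your cyclic strategy is sound and two of the three legs are essentially correct, but the step (b) $\Rightarrow$ (c) has a real gap: the Jordan-decomposition/positive-extension maneuver you propose does not make sense on $A$, because $A$ is a non-self-adjoint algebra. There is no decomposition of a general $\psi\in A_*$ into positive normal pieces on $A$, and ``positive normal functional on $A$'' is not a meaningful notion here. Your fallback suggestion (pass to $A^{**}\subset M^{**}$ and multiply by the central projection onto the normal part) does produce a normal extension, but need not preserve the norm, so it need not be a \emph{Hahn--Banach} extension.

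The fix is to reroute: you already observed that (b) trivially implies ``at most one normal Hahn--Banach extension,'' and you already used the $(\Leftarrow)$ direction of Lemma~\ref{gw2} in your (c)~$\Rightarrow$~(a) step. Apply that same lemma here to get (b)~$\Rightarrow$~(a) directly. Once (a) is in hand, Corollary~\ref{co6} says every Hahn--Banach extension of a normal functional on $A$ is normal; so the unique extension guaranteed by (b) is normal, which is exactly (c) with existence. No separate existence argument is needed.

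A minor point on your (a)~$\Rightarrow$~(b): the phrase ``$\psi$ viewed as an element of $(A+A^*)^*$'' is imprecise, since two Hahn--Banach extensions $\Psi_1,\Psi_2$ of $\psi$ from $A$ need not agree on $A^*$, so there is no single ``$\psi$ on $A+A^*$'' to which both restrict. Cleaner: once Corollary~\ref{co6} tells you $\Psi_1,\Psi_2$ are both normal, invoke the $(\Rightarrow)$ direction of Lemma~\ref{gw2} directly to conclude $\Psi_1=\Psi_2$.
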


Of course all of these hold when $A$ is a maximal subdiagonal subalgebra of 
a $\sigma$-finite von Neumann algebra $M$.    Conversely these properties characterize 
maximal subdiagonal subalgebras.  The following is a partial strengthening of \cite[Theorem 3.4]{Lis}
(the equivalence of (i) and (iv) there).   

\begin{corollary}[Gleason-Whitney type theorem] Let $A$ be a weak* closed unital subalgebra of a $\sigma$-finite von Neumann algebra $M$, for which
\begin{itemize} 
\item $\sigma^\nu_t(A)=A$ for each $t\in \mathbb{R}$ (where $\sigma^\nu_t$ is the modular 
automorphism group for $M$ described in our Introduction), and
 \item the canonical expectation $\mathcal{E}:M\to A\cap A^*=D$ is
 multiplicative on $A$.
\end{itemize} 
Then $A+A^*$ is weak* dense in $M$  (that is, $A$ is maximal subdiagonal with respect to $D$)  
if and only if every normal functional on $A$ has a 
unique normal Hahn-Banach extension to $M$.
 \end{corollary}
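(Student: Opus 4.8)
The plan is to prove the two implications separately, exploiting the hypotheses on $A$ (modular invariance and multiplicativity of $\mathcal{E}$) to connect the abstract Gleason--Whitney statement to the concrete theory of maximal subdiagonal algebras.

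For the forward direction, suppose $A+A^*$ is weak* dense in $M$. As noted in the excerpt (the characterization of maximality via modular invariance, \cite[Theorem 1.1]{Lis}), the stated hypotheses together with weak* density of $A+A^*$ say precisely that $A$ is a maximal subdiagonal algebra in $M$ with respect to $\mathcal{E}$. By Theorem~\ref{uadasig}, $A$ is then an Ueda algebra, hence an F \& M Riesz algebra by Theorem~\ref{co5}. Now I would invoke the preceding Corollary (the ``Gleason--Whitney type theorem'' for F \& M Riesz algebras): for such an $A$, weak* density of $A+A^*$ is equivalent to every normal functional on $A$ having a unique normal Hahn--Banach extension to $M$. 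This gives the conclusion immediately; alternatively one can cite Lemma~\ref{gw2} for the ``at most one'' part and use the F \& M Riesz property to upgrade an arbitrary Hahn--Banach extension to a normal one, but routing through the already-established Corollary is cleanest.

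For the converse, suppose every normal functional on $A$ has a unique normal Hahn--Banach extension to $M$. In particular there is \emph{at most one} normal Hahn--Banach extension of any normal functional on $A$, so by the easy implication ($\Leftarrow$) of Lemma~\ref{gw2} we conclude that $A+A^*$ is weak* dense in $M$. This direction is essentially immediate once Lemma~\ref{gw2} is in hand; the hypotheses on $\mathcal{E}$ and modular invariance are not even needed here, only weak* closedness and unitality of $A$, which hold by assumption.

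The main obstacle, such as it is, lies in making sure the forward direction genuinely produces a \emph{normal} extension and its uniqueness, rather than merely uniqueness among normal extensions or existence of some extension. But this is exactly the content packaged into the F \& M Riesz machinery: Corollary~\ref{co6} guarantees that any Hahn--Banach extension of a weak* continuous functional on $A+A^*$ is normal, and combined with weak* density of $A+A^*$ (so that a normal functional on $A$ extends uniquely to a normal functional on $A+A^*$ and thence to $M$) one gets both existence and uniqueness of a normal Hahn--Banach extension, and uniqueness even among all Hahn--Banach extensions. So the real work has already been done in the earlier results, and the proof here amounts to correctly identifying that the hypotheses of this Corollary place $A$ squarely in the scope of Theorem~\ref{uadasig} and hence of the F \& M Riesz results.
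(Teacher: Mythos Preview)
Your proof is correct and follows essentially the same approach the paper intends (the paper leaves this corollary without an explicit proof, presenting it as an immediate consequence of the preceding first Gleason--Whitney corollary and Lemma~\ref{gw2}). Your forward direction---verifying via modular invariance that $A$ is maximal subdiagonal, then invoking Theorem~\ref{uadasig} and Theorem~\ref{co5} to obtain the F \& M Riesz property, and finally applying the first Gleason--Whitney corollary---and your converse via the $(\Leftarrow)$ direction of Lemma~\ref{gw2} are exactly the route the paper's arrangement of results dictates; your observation that the modular-invariance and multiplicativity hypotheses are unnecessary for the converse is also accurate.
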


\section{The case of semi-finite and general von Neumann algebras}

We first briefly discuss the results of our paper in the setting of 
general von Neumann algebras.   We recall from e.g.\
\cite[Proposition 2.2.5]{Sakai} that
any  von Neumann algebra $M$ 
is a  direct sum of algebras $M_i$ of the form 
$R_i \bar{\otimes} B(\mathfrak{H}_i)$ for a $\sigma$-finite  von Neumann algebra  $R_i$.    
If $A$ is a maximal subdiagonal algebra
in $M$, and if the center of $M$ is contained in the center
of $A \cap A^*$, then the central projections 
corresponding to the direct sum will allow a decomposition of a maximal 
subdiagonal algebra $A$ of $M$ as a direct sum of algebras $A_i\subset M_i$, and it is easy to see that 
these are maximal 
subdiagonal subalgebras of $M_i$.   
Assuming that the $B(\mathfrak{H}_i)$'s appearing 
in the form of $M_i$ above correspond to separable Hilbert spaces $\mathfrak{H}_i$,
then  $R_i \bar{\otimes} B(\mathfrak{H}_i)$
is $\sigma$-finite, 
and  we get Ueda's theorem in this case (Theorem \ref{uadasig} but with the 
$\sigma$-finite  $M$ replaced by our $M$ above).   
  We immediately deduce that all
the 
results in the 
last section (Section \ref{cons}) are valid for this $A$ and $M$.

\medskip

We now turn to investigating when
 Ueda's peak set theorem fails.
 Of course if Ueda's peak set  theorem fails for a von Neumann algebra $M$ then it
also fails for every weak* closed unital subalgebra $A$ of  $M$.
Thus henceforth in this section we shall assume that  $A = M$.

An {\em Ulam measurable cardinal} is one such that if $I$ is  a set  of this cardinality, then there 
exists a 
free ultrafilter $p$ on $I$ such that every 
sequence $A_1 \supseteq A_2 \supseteq \cdots$ of nonempty sets in $p$ has nonempty intersection
\cite{CN,Jech}.    
(Remark: it is a pleasant exercise that an ultrafilter allows no empty countable intersections of members
if and only if it is closed under countable intersections.  Also, one may always make countable intersections `decreasing'.)  
The concept of
{\em measurable cardinal} used in the next result will be explained 
a little more at the start of its proof.    This result shows that there is
little chance of generalizing Ueda's peak set theorem to semi-finite von Neumann algebras in 
the usual set theoretic universe used in most of functional analysis, since 
this would imply a solution to one of the famous open ``problems''
in mathematics.    We use quotes because nowadays this
problem is not believed to be solvable.

  The strategy of our proof is simple: 
it is known that a bound on the size of a set $I$ in relation to being of measurable cardinality is equivalent to  being
`realcompact'.  Also, $I$ not being realcompact is known to imply that $\beta I \setminus I$  contains points not contained in closed G$_\delta$ subsets of $\beta I$ of a certain type.  Finally, for $C(K)$ spaces the closed G$_\delta$ sets are exactly the peak sets, by the strict form
of the Urysohn lemma or
as in Proposition \ref{cipeak3}.  
 However since we lack a good reference (besides scattered pieces
found in an internet search for `realcompact discrete spaces'; see e.g.\ \cite[p.\ 402 ff]{CN}), we will include short arguments for several of these points for the reader's convenience.  
   
\begin{theorem} \label{Ulam}   If Ueda's peak set theorem held for all finite von Neumann algebras then 
there exist no (uncountable) measurable cardinals.
\end{theorem}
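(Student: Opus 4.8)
The plan is to contradict the existence of measurable cardinals by exhibiting, for a set $I$ that is ``too large'', a commutative (hence finite) von Neumann algebra $M=\ell^\infty(I)$ --- which for uncountable $I$ is not $\sigma$-finite, so this is consistent with our main theorem --- together with a singular state on $M$ for which the conclusion of Ueda's theorem (with $A=M$) fails. Recall that a measurable cardinal is an uncountable cardinal $\kappa$ carrying a nonprincipal $\kappa$-complete ultrafilter $p$; since $\kappa$ is uncountable, $p$ is closed under countable intersections, equivalently every decreasing sequence of nonempty members of $p$ has nonempty intersection (the defining property of an Ulam measurable cardinal recalled above). So it is enough to prove: \emph{if a set $I$ carries a free ultrafilter $p$ that is closed under countable intersections, then Ueda's peak set theorem fails for the finite von Neumann algebra $M=\ell^\infty(I)$ with $A=M$.}

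First I would set up the dictionary. Write $M=\ell^\infty(I)=C(\beta I)$, so $M_*=\ell^1(I)$, and note that the singular states on $M$ are exactly those vanishing on every minimal projection $e_i=\chi_{\{i\}}$. The ultrafilter limit $\varphi(f)=\lim_{i\to p}f(i)$ is a (pure) state on $M$, and since $p$ is free it kills each $e_i$, so $\varphi$ is singular. On the peak side, the discussion in Section 2 (see Proposition~\ref{cipeak3}) shows that every peak projection for the $C^*$-algebra $M$ has the form $q=\I_{\{ 1 \}}(f)=u(f)$ for some $f\in M$ with $0\le f\le 1$, namely the weak* limit in $M^{**}$ of the decreasing sequence $(f^n)$. (A peak $u(x)$ for a general contraction $x$ can be replaced here by $u(x^*x)$, since in a commutative algebra $u(x^*x)=u(x)^*u(x)=u(x)$, and $0\le x^*x\le 1$.)

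Now suppose, for contradiction, that Ueda's theorem held for this $M$ with $A=M$; applied to $\varphi$ it yields such an $f$ with (i) $\varphi(q)=1$ and (ii) $\psi(q)=0$ for all $\psi\in M_*$, where $q=\I_{\{ 1 \}}(f)$. Evaluating (ii) at $\psi=\delta_i\in\ell^1(I)$ and using $f^n\to q$ weak* gives $\lim_n f(i)^n=\langle q,\delta_i\rangle=0$, so $f(i)<1$ for \emph{every} $i\in I$. On the other hand $\I_{\{ 1 \}}(t)\le t$ on $[0,1]$ forces $q\le f$ in $M^{**}$, hence $1=\varphi(q)\le\varphi(f)\le\|f\|\le 1$, so $\lim_{i\to p}f(i)=\varphi(f)=1$; therefore $A_n:=\{i\in I: f(i)>1-\tfrac{1}{n}\}$ lies in $p$ for every $n\ge 1$. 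Replacing $A_n$ by $A_1\cap\cdots\cap A_n$ we may take $(A_n)$ decreasing, and each $A_n\neq\emptyset$ since $\emptyset\notin p$; by the countable intersection property of $p$ the set $\bigcap_n A_n$ is nonempty. Any $i_0\in\bigcap_n A_n$ satisfies $f(i_0)\ge 1$, contradicting $f(i_0)<1$. Thus Ueda's theorem fails for $\ell^\infty(I)$, which proves the claim and hence the theorem.

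This is precisely the ``non-realcompact discrete space'' strategy sketched before the theorem, made explicit: condition (ii) says the peak set corresponding to $q$ (a closed $G_\delta$, equivalently a zero set, of $\beta I$) misses $I$, condition (i) says it contains the point of $\beta I\setminus I$ determined by $p$, and the countable completeness of $p$ is exactly the obstruction to the existence of such a set --- that is, exactly what makes the discrete space $I$ fail to be realcompact. I do not anticipate a serious obstacle; the one point requiring care is to verify that the ``singular'' and the ``$\varphi(q)=1$'' halves of Ueda's conclusion genuinely translate into the incompatible combinatorial assertions ``$f<1$ pointwise on $I$'' and ``$\lim_{i\to p}f(i)=1$'', whose incompatibility is broken only through the countable intersection property of $p$.
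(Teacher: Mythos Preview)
Your proof is correct and follows essentially the same route as the paper's: both use $M=\ell^\infty(I)$ with the ultrafilter state $\varphi$, and derive a contradiction from the countable completeness of $p$ by producing a decreasing sequence of sets in $p$ with empty intersection. The only cosmetic difference is that the paper invokes Theorem~\ref{psvn} to write the peak projection as an infimum $\wedge_n q_n$ of projections $q_n=\chi_{A_n}\in M$ and then passes through the Stone--\v{C}ech picture $q_n\leftrightarrow[A_n]$, whereas you bypass this by working directly with the peaking function $f$ and its level sets $A_n=\{i:f(i)>1-\tfrac1n\}$; these are in fact the same sets (they are the spectral projections used in the proof of Theorem~\ref{psvn}), so your argument is a slightly more self-contained instantiation of the paper's.
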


\begin{proof}    The existence of (uncountable) measurable cardinals is known to be 
equivalent to the existence of Ulam measurable cardinals \cite{CN,Jech}.
Suppose that $I$ was an uncountable set of 
Ulam measurable cardinality.  
Clearly $M= \ell^\infty(I)$ is a finite (and semi-finite) von Neumann algebra, and $A = M$ is a maximal subdiagonal
algebra.   We may view the free ultrafilter $p$ in the definition of
Ulam measurable cardinality as a (singleton) closed set in $\beta I \setminus I$.  It is the support of a Dirac probability measure, 
which can be viewed as a pure state on $M = C(\beta I)$ (evaluation at $p$).    This state is singular (we leave this
as an exercise since there are many ways to see this).    Moreover via the well known correspondences between
minimal projections and pure states and their supports, the support of this
state in $C(\beta I)^{**}$    is  the  minimal projection which is the image $e$  of the 
characteristic function of $F = \{ p \}$ in $C(\beta I)^{**}$ (that is, it is the image of the functional $\mu \mapsto \mu (F)$ on $C(\beta I)^*$, viewing the latter as a space of measures).     Indeed here we are just invoking 
aspects of the well known noncommutative dictionary between the basic theory of
probability measures and that of states.

If Ueda's theorem held for $M$ then there would exist
a peak projection $q \in C(\beta I)^{**}$ with $e \leq q \leq z$, where $z$ is the orthogonal complement of  the canonical projection  in $M^{**}$ corresponding to $M_*$.    These three projections $e, z, q$ correspond to closed sets in $\beta I$, namely  
to sets $F  = \{ p \}, \beta I \setminus I,$ 
and $E$ say, respectively; and the latter is a classical peak set by the `peaking'
 theory \cite{BHN,Hay,Bnpi,BRead,BN,BReadII}.  (That $z$ corresponds to 
$\beta I \setminus I$ is well known, and was sketched in an earlier
version of the present paper available on arXiV.)

By Theorem \ref{psvn}, the characteristic function 
of any peak set $E$ for $M$ is 
an intersection of a decreasing sequence of projections in $M = C(\beta I) =
l^\infty(I)$.  Thus by the theory of the
Stone-Cech compactification, $E = \cap_{n=1}^\infty \, [A_n]$,
where $[A_n]$ is the (clopen) closure in $\beta I$ of  (open)  $A_n \subset I$,
where $A_1 \supseteq A_2 \supseteq \cdots$.
Also, $\cap_{n=1}^\infty \, [A_n] \subset \beta I \setminus I$
if and only if $\cap_{n=1}^\infty \, A_n = \emptyset$.  To see the latter,
note that $$\cap_{n=1}^\infty \, A_n \subset
(\cap_{n=1}^\infty \, [A_n]) \cap I \subset (\beta I \setminus I) \cap I = \emptyset$$ if
$\cap_{n=1}^\infty \, [A_n] \subset \beta I \setminus I$.
The converse follows from the inclusion
$$I \cap (\cap_{n=1}^\infty \, [A_n]) \subset I \cap [A_n] = A_n, \qquad
n \in \Ndb.$$    

Thus for any closed subset $F$
of a peak set  $E$ for $C(\beta I)$, with $E \subset \beta I \setminus I$,
we have $F \subset \cap_{n=1}^\infty \, [A_n]$  for sets $A_1 \supseteq A_2 \supseteq \cdots$ in $I$ 
with empty intersection.   In our special case where $F = \{ p \}$, the fact that $p \in [A_n]$ 
implies that $A_n \in p$ for all $n \in \Ndb$, with $p$ regarded as an ultrafilter.  This contradicts
the property  $p$ has in the definition of
Ulam measurable cardinality above.   So there is no Ulam measurable cardinal.
\end{proof}

\begin{remark}    By the last  proof 
  Ueda's peak set theorem holding for $M = A = \ell^\infty(I)$, is equivalent to 
saying that every closed set $F$ in $\beta I \setminus I$ which is 
the support of a Borel probability measure, is contained in $\cap_{n=1}^\infty \, [A_n]$  for sets $A_1 \supseteq A_2 \supseteq \cdots$ in $I$ with empty intersection.    Closed sets  in $\beta I \setminus I$  have a nice characterization
in the basic literature of the Stone-Cech compactification. 
\end{remark} 

It turns out that Ueda's peak set  theorem
also fails when $M = A = B(H)$ with $H$ of dimension an Ulam measurable cardinal,  
 or a real valued measurable cardinal, 
as is discussed together with Nik Weaver in \cite{BW}.    Indeed in 
 that paper (which was written after the first distributed version of  
the present paper) it is shown that  if  $M$ is a von Neumann algebra then Ueda's peak set  theorem
fails when $M = A$ if and only if $M$ possesses a singular state $\varphi$ which 
is {\em  regular}, that is, $\varphi(\vee_n \, q_n) = 0$ for every sequence of projections $(q_n)$  in ${\rm Ker}(\varphi)$.
(See \cite{Ham} for other characterizations and facts about regular states;
hence Ueda's peak set  theorem is strongly tied to `quantum measure theory'
in the sense of that reference.)   This is also equivalent to saying that there is
a collection of mutually orthogonal projections in $M$ of cardinality $\geq$
a fixed cardinal $\kappa$, namely the first cardinal on which there is a
 `regular' singular finitely additive
probability measure.   (Here and below measures are assumed to be defined
on all subsets of the cardinal.)
The existence of regular singular states or such regular measures
is generally believed to be consistent with ZFC set theory.    Indeed as 
explained in \cite{BW} it is believed to be consistent with ZFC set theory
that the latter `first cardinal' is $\leq$ the cardinality of the real numbers. 
On the other hand, since any cardinal on which there is a
 singular probability measure dominates 
the `first cardinal' above,
it follows that if $M \subset B(H)$ where dim$(H)$ is smaller than any 
 real-valued measurable cardinal   
(or if measurable cardinals do not exist), then Ueda's 
peak set theorem holds for $M$ (and taking $A = M$).

From the assertion in the last paragraph about the cardinality of the real numbers,
it follows that one should not hope to be able to prove 
 Ueda's
peak set theorem for $A = M = l^\infty(\Rdb)$ in ZFC.
Indeed Ueda's theorem in this case implies by
the assertion in the last paragraph
about regular states, a negative solution to the famous `Banach measure problem':
Is there a probability 
measure defined on all subsets of 
$[0,1]$ which is zero on singletons?  (It is well known that if there is, then one 
can find another that extends Lebesgue measure.) Banach showed that you cannot prove an affirmitive answer to this in ZFC.  
The existence of a negative answer 
is equivalent to the nonexistence of  measurable cardinals in ZFC.
  However as we have stated earlier,
it is generally believed by set theorists 
that the existence of measurable cardinals  is consistent with ZFC. 
 
This shows that one cannot hope to be able to prove Ueda's
peak set theorem in ZFC for von Neumann algebras that are 
much `bigger' than $\sigma$-finite (the case of the main theorem
of our paper).   And indeed experts in von Neumann algebras
are usually happy to only consider $\sigma$-finite von Neumann algebras in their results,
because `bigger' algebras are often pathological.  
 On the other
hand it is shown in \cite{BW} that Ueda's
peak set theorem holds in ZFC for $A = M = l^\infty(\aleph_1)$,
where $\aleph_1$ is the first uncountable cardinal,
and this von Neumann algebra is not $\sigma$-finite.
Hence if we assume the continuum hypothesis then 
Ueda's peak set theorem does hold for $A = M = l^\infty(\Rdb)$.
Assuming the negation of the continuum hypothesis, a remaining 
question seems to be for what cardinals $\kappa$ between  the infinite countable
cardinal 
and the cardinality of the reals can one prove Ueda's
peak set theorem in ZFC for 
$A = M = l^\infty(\kappa)$.   Nik Weaver has sketched to us a
proof in the case of $\aleph_2$, and this trick seems
to extend to $\aleph_n$ for $n \in \Ndb$. 

Thinking about the last paragraph in conjunction with the 
proof of our main theorem, suggests to us 
that it may possibly be interesting to study Haagerup's reduction theory,
the standard form, and related topics, for von Neumann algebras
possessing uncountable collections of mutually orthogonal projections
 of cardinality smaller than the cardinality of the reals
(assuming of course the negation of the continuum hypothesis).   

\medskip

{\em Acknowledgments.}  We are grateful to Nik Weaver for a very helpful and 
lengthy conversation  
shortly before  distribution of 
an earlier version of
the present paper, which led to \cite{BW}, a paper largely devoted to quantum measure theory 
in the sense of \cite{Ham}, quantum cardinals, and various
continuity properties of states on von Neumann algebras.
In Section 9 of that paper the
set theoretic issues associated with Ueda's peak set theorem 
for von Neumann algebras are explored more fully,
as mentioned above.       We are also grateful to the referee for many helpful comments.

\end{document}